\newcommand{\R}{\mathbb{R}}
\renewcommand{\P}{\mathbb{P}}
\newcommand{\F}{\mathcal{F}}
\newcommand{\V}{\mathcal{V}}
\newcommand{\E}{\mathbb{E}}
\newcommand{\FF}{\mathbb{F}}
\newcommand{\1}{\mathbbm{1}}
\theoremstyle{definition}
\newtheorem{ex}{Example}[section]
\theoremstyle{plain}
\newtheorem{thm}{Theorem}[section]
\newtheorem{remark}[thm]{Remark}
\newtheorem{lemma}[thm]{Lemma}
\newtheorem{corollary}[thm]{Corollary}
\newtheorem{prop}[thm]{Proposition}
\begin{document}

\author{Camilo Hern\'andez}
\address{
IEOR Departament\\
Columbia University\\
NY, USA.
}
\email{camilo.hernandez@columbia.edu}
 
\author{Mauricio Junca}
\address{
Mathematics Department\\
Universidad de los Andes\\
Bogot\'a, Colombia.
}
\email{mj.junca20@uniandes.edu.co}

\author{Harold Moreno-Franco}
\address{
Laboratory of Stochastic Analysis and its Applications\\
National Research University Higher School of Economics \\
Moscow, Russia.
}
\email{hmoreno@hse.ru}

\keywords{Dividend payment, Optimal control, Ruin time constraint, Spectrally one-sided L\'evy processes}

\title[A time of ruin constrained optimal dividend problem]{A time of ruin constrained optimal dividend problem for spectrally one-sided L\'evy processes}

\begin{abstract}
We introduce a longevity feature to the classical optimal dividend problem by adding a constraint on the time of ruin of the firm. We extend the results in \cite{HJ15}, now in context of one-sided L\'evy risk models. We consider de Finetti’s problem in both  scenarios with and without fix transaction costs, e.g. taxes. We also study the constrained analog to the so called Dual model. To characterize the solution to the aforementioned models we introduce the dual problem and show that the complementary slackness conditions are satisfied and therefore there is no duality gap. As a consequence the optimal value function can be obtained as the pointwise infimum of auxiliary value functions indexed by Lagrange multipliers. Finally, we illustrate our findings with a series of numerical examples.
\end{abstract}



\maketitle

\section{Introduction}\label{Int}
Proposed in 1957 by Bruno de Finetti \cite{Definetti}, the problem of finding the dividend payout strategy that maximizes the discounted expected payout throughout the life of an insurance company has been at the core of actuarial science and risk theory. An important element of this problem is how one chooses to model the process describing the reserves of the firm, $X$. The solution to de Finetti's problem has been given for the case $X$ is assumed to be a compound Poisson process with negative jumps and positive drift, commonly referred as the \emph{Cram\'er-Lundberg} model, where $X$ is a Brownian motion, and the sum of the previous two, \cite{Schmidli,asta,tak}. Nowadays, the case in which $X$ is assumed to be a spectrally negative L\'evy process is the most general set up for which the problem has been studied (references below). The case when $X$ is a spectrally positive is also considered in the literature and it is known as the Dual model, \cite{KyYa14}. This set up fits in the context of a company whose income depends on inventions or discoveries. Both settings make a strong use of properties of the underlying L\'evy measure and fluctuation theory of L\'evy processes which requires the study of the so-called \emph{scale functions}.

A common result in all these scenarios is that the optimal strategy, in the absence of transaction cost, corresponds to a \emph{barrier/reflection strategy}. In such strategy the reserves are reduced to the barrier level by paying out dividends. Nevertheless, in general the solution is not necessarily of this type. In \cite{azcuemuler2005} the first example for the \emph{Cram\'er-Lundberg} model with Gamma claim distribution for which no barrier strategy is optimal was presented. Today, it is well known that, in the spectrally negative case, barrier strategies solve the optimal dividend problem if the tail of the L\'evy measure is log-convex\footnote{A function $f$ is said to be log-concave [resp. log-convex] if $\log(f)$ is concave [resp. convex].}, see \cite{Loeffen10}. Now, for the spectrally positive case, the optimal strategy is always a barrier strategy, \cite{KyYa14}. In the presence of transaction cost, when $X$ is a spectrally negative L\'evy process and the L\'evy measure has a log-convex density, \cite{LoeffenTrans} shows that the optimal strategy is given by paying out dividends in such a way that the reserves are reduced to a certain level $b_-$ whenever they are above another level $b_+>b_-\geq0$. This strategy is known as \emph{single band strategy}. The same result holds for the Dual model, \cite{BayraktarImpdual}.

However, a missing element in the current set up had long been noticed. The longevity aspect of the firm remained as a separate problem, see \cite{schmidli2002} for a survey on this matter. Despite efforts to integrate both features, \cite{Hipp03,Jostein03,ThonAlbr,Grandits}, it was not until very recent that a successful solution to a model that actually accounts for the trade-off between performance and longevity was presented. In \cite{HJ15}, the authors considered de Finetti's problem in the setting of Cram\'er-Lundberg reserves with exponentially distributed jumps adding a constraint on the expected time of ruin of the firm. 

The main contribution of this article is to extend the results of \cite{HJ15} for different models. Namely, to the case in which the reserves are modeled by a spectrally negative L\'evy process with complete monotone L\'evy measure with and without transaction cost, and the Dual model. As an intermediate step we also show that scale functions of the spectrally negative L\'evy process with complete monotone measure are strictly log-concave in an unbounded interval.

This paper is organized as follows: In Section \ref{problem} we present the problem we want to solve and describe the strategy to solve it. In the Section \ref{scale} we review the main results in fluctuation theory of spectrally negative L\'evy processes. Section \ref{SecdeFinetti} presents the solution to the constrained dividend problem for de Finetti's model, first without transaction cost and later including transaction cost. The result of strict log-concavity of scale functions is also included in this section, Corollary \ref{strictlogconcave}. Section \ref{SecDual} presents the solution of the constrained problem for the Dual model. In the following section we illustrate our results throughout a series of numerical examples. We finalize this article with a section of conclusions and questions.

\section{Problem formulation}\label{problem}

Let $X$ be the process modeling the reserves of the firm. In the setting of this paper we will assume $X$ to be a \emph{spectrally one-sided L\'evy process}, i.e. spectrally negative [resp. positive] L\'evy processes which have neither monotone paths nor positive [negative] jumps. The above process is defined on the filtered probability space $(\Omega,\F,\FF,\P)$, where  $\FF=(\F_t)_{t\geq0}$ is the natural filtration generated by the process $X$. Given the process $X$, we consider the family of probability measures $\{\P_x:x\in\R\}$ such that under $\P_x$ we have $X_0=x$ a.s. (and so $\P_0=\P$), and we denote by $\E_x$ expectation with respect to $\P_x$.

The insurance company is allowed to pay dividends which are modeled by the process $D=(D_t)_{t\geq0}$ representing the cumulative payments up to time $t$. A dividend process is called admissible if it is a non-decreasing, right continuous with left limits, i.e. c\`adl\`ag, process adapted to the filtration $\FF$ which starts at 0. Therefore, the reserves process under dividend process $D$ reads as
\begin{equation}\label{surplus}
L_t^D= X_t-D_t.
\end{equation}
Let $\tau^D$ denote the time of ruin under dividend process $D$, i.e., $\tau^D=\inf\{t\geq0: L_t^D<0\}$. We also require that the dividend process do not lead to ruin, i.e., $D_{t+}-D_t\leq L_t^D$ for $t<\tau^D$ and $D_t= D_{\tau^D}$ for $t\geq \tau^D$, so no dividends are paid after ruin. We call $\Theta$ the set of such processes. As proposed by de Finetti, the company wants to maximize the expected value of the discounted flow of dividend payments along its lifespan, where the lifespan of the company will be determined by its ruin. If we also consider a transaction cost each time dividends are paid, then a continuous dividend process is forbidden, therefore, we require in addition that the dividend processes $D$ are pure jump processes. So, the objective function of the company can be written as
\begin{align}
\V^D(x):=\E_{x}\left[ \int_0^{\tau^D-}e^{-q t}(d D_t-\beta d N^D_{t})\right],
\end{align}
where $q$ is the discount factor, $\beta\geq0$ the transaction cost and $N^D$ is the stochastic process that counts the number of jumps of $D$.

The purpose of this paper is to add a restriction on the dividend process $D$ to the previous problem, which we model by the constraint:
\begin{equation}\label{Rest}
\E_{x} \Big[e^{-q \tau^D}\Big]\leq K, \quad 0\leq K\leq1 \text{ fixed.}
\end{equation}
The motivation behind such a constraint is that it takes into account the time of ruin under the dividend process. One possible way to choose the parameter $K$ is to consider the equivalent constraint
$$\E_{x}\left[\int_0^{\tau^D}e^{-qt}dt\right]\geq\int_0^Te^{-qt}dt, \quad T>0,$$
as in \cite{HJ15}. Also, note that 
$$\E_{x} \Big[e^{-q \tau^D}\1_{\tau^D<\infty}\Big]\leq\E_{x} \Big[\1_{\tau^D<\infty}\Big]=\P_x(\tau^D<\infty),$$
hence, another possibility is to interpret the constraint as a restriction in the probability of ruin weighted by the time of ruin.

The advantage of the chosen form of the constraint, as it will be clear in the following sections, is that it fits in with the model in a smooth way. Combining all the above components we state the problem we aim to solve:
\begin{align*}\label{P1}
\tag{P}
V(x):=\underset{D\in \Theta}\sup\quad \V^D(x), \quad \text{s.t.} \quad \E_{x} \Big[e^{-q \tau^D}\Big]\leq K.
\end{align*}

In order to solve this problem we use Lagrange multipliers to reformulate our problem. For $ \Lambda \geq 0$ we define the function
\begin{equation}\label{lagrangian}
\V_{\Lambda}^{D}(x):=\V^D(x)-\Lambda\E_{x}\Big[e^{-q \tau^D }\Big]+ \Lambda K.
\end{equation}

We will follow the same strategy as in \cite{HJ15} to verify strong duality which is summarized here: First note that \eqref{P1} is equivalent to $\underset{D\in \Theta}\sup\,\, \underset{\Lambda\geq 0}\inf\,\,\V_{\Lambda}^{D}(x)$ since
$$\underset{\Lambda\geq 0}\inf\,\,\V_{\Lambda}^{D}(x)=\begin{cases} \V^D(x), &\mbox{if }\E_{x} \Big[e^{-q\tau^D}\Big]\leq K \\ 
-\infty, & \mbox{otherwise }. \end{cases} $$
Next, the dual problem of \eqref{P1}, is defined as 
\begin{equation}\label{D}
\tag{D}
\underset{\Lambda\geq 0}\inf\,\,\underset{D\in \Theta}\sup\,\, \V_{\Lambda}^{D}(x),
\end{equation}
which is always an upper bound for the primal \eqref{P1}. Therefore, the main goal of this paper is to prove that 
$$\underset{D\in \Theta}\sup\,\, \underset{\Lambda\geq 0}\inf\,\,\V_{\Lambda}^{D}(x)= \underset{\Lambda\geq 0}\inf\,\,\underset{D\in \Theta}\sup\,\, \V_{\Lambda}^{D}(x).$$ 

Now, to solve \eqref{D}, we can focus on solving for fixed $\Lambda\geq0$ the problem
\begin{equation}\label{P2}
\tag{P$_\Lambda$}
V_\Lambda(x):=\underset{D\in \Theta}\sup\,\, \V_{\Lambda}^{D}(x).
\end{equation}

Note that this is the optimal dividend problem with a particular type of Gerber-Shiu penalty function as in \cite{avram2015}. There, the authors considered the spectrally negative case under sufficient conditions on the L\'evy measure and prove the optimality of barrier and single band strategies without and with transaction cost, respectively. The optimal strategy in the Dual model, when $\beta=0$, also corresponds to a barrier strategy regardless of the L\'evy measure as shown in \cite{Yin}. In both scenarios the value of such barrier depends on the shape of the well known scale functions. Not surprisingly such family of functions is also the tool to characterize the solution of \eqref{P1}. The following section formally presents such family of functions and motivates its introduction in this context.

\section{Scale functions of Spectrally Negative L\'evy processes}\label{scale}

In this section $X$ will be assumed to be spectrally negative. However, the differences with the spectrally positive case should be clear since $-X$ is spectrally positive. For the process $X$ its \emph{Laplace exponent} is given by
\begin{equation}
\psi(\theta):= \log (\E[e^{\theta X_1}]),
\end{equation}
and it is well defined for $\theta\geq0$. The L\'evy-Khintchine formula guarantees the existence of a unique triplet $(\gamma,\sigma,\nu)$, with $\gamma\in \R$, $\sigma\geq0$ and $\nu$ a measure concentrated on $(-\infty,0)$ satisfying  $\int_{(-\infty,0)} (1\wedge x^2)\nu(dx)<\infty$, such that,
\begin{align*}
\psi(\theta)=\gamma \theta +\frac{1}{2}\sigma^2 \theta^2 +\int_{(-\infty,0)}(e^{\theta x}-1-\theta x \1_{\{-1<x\}})\nu(dx),
\end{align*}
for every $\theta\geq 0$. The triplet $(\gamma,\sigma,\nu)$ is commonly referred as the \emph{L\'evy triplet}.

Scale functions appear naturally in the context of fluctuation theory of spectrally negative L\'evy processes. More specifically, they are characterized as the family of functions $W^{(q)}:\R \rightarrow [0,\infty)$ defined for each $q\geq 0$, such that $W^{(q)}(x)=0$ for $x<0$ and it is the unique strictly increasing and continuous function whose Laplace transform satisfies
\begin{equation}\label{wqlaplace}
\int_0^\infty e^{-\beta x} W^{(q)}(x) dx= \frac{1}{\psi(\beta)-q}, 	\qquad  \beta >\Phi(q),
\end{equation}
where $\Phi(q):=\sup\{\theta\geq0: \psi(\theta)=q\}$ is the right inverse of $\psi(\theta)$. Such functions $W^{(q)}$ are referred as the \emph{q-scale functions}. Associated to this functions, we define for $q\geq0$  the functions $Z^{(q)}:\R\rightarrow[1,\infty)$ and $\bar{Z}^{(q)}:\R\rightarrow \R$ as 
\begin{align*}
Z^{(q)}(x)&:=1+q\int_0^x W^{(q)}(z)dz,\\
\bar{Z}^{(q)}(y)&:=\int_0^y Z^{(q)}(z)dz=y+q\int_0^y\int_0^z W^{(q)}(w)dw dz.
\end{align*}

We now review some properties of the scale functions, available for example in \cite{KKRivero2013}, that will be needed later on. First, it is useful to understand their behaviour at $0$ and at $\infty$. For $q\geq 0$, $W^{(q)}(0)=0$ if and only if X has unbounded variation. Otherwise, $W^{(q)}(0)=1/c$, where $c=\gamma+\int_{-1}^0 |x| \nu(dx)$. Recall that $c$ must be strictly positive to exclude the case of monotone paths. The initial value of the derivative of the scale function is given by 
\begin{align*}
W^{(q)'}(0+)=\begin{cases}
2/\sigma^2,\qquad &\text{if } \sigma > 0 \\
(\nu(-\infty,0)+q)/c^2,\qquad &\text{if } \sigma=0 \text{ and } \nu(-\infty,0)<\infty\\
\infty, & \text{otherwise}.
\end{cases}
\end{align*}

Regarding the behavior at infinity, we know that
\begin{align}\label{limitqfact}
\lim_{x\rightarrow \infty} e^{-\Phi(q)x}W^{(q)}(x)=\frac{1}{\psi'(\Phi(q))}.
\end{align}

\begin{remark}\label{logconcaveprop}
From \cite{Loeffen08} we know that  $q$-scale functions are always log-concave on $(0,\infty)$.
\end{remark}

An useful representation of scale functions was provided in \cite{Loeffen08}. Making use of Bernstein's theorem it was proven that when the L\'evy measure $\nu$ has a completely monotone density\footnote{A function $f$ is said to be \emph{completely monotone} if $f \in C [0,\infty)$, $f \in C^\infty (0,\infty)$ and satisfies $(-1)^n \frac{d^n}{dx^n}f(x)\geq 0$}, and $q>0$, 
 \begin{align}\label{qscalecompmon}
 W^{(q)}(x)=\frac{e^{\Phi(q)x}}{\psi'(\Phi(q))}- f(x), \quad x>0, 
 \end{align}
 with $f$ a completely monotone function. Furthermore, from the proof of this result it is known that $f(x)=\int_{0+}^{\infty}e^{-xt}\xi(dt+\Phi(q))$ where $\xi$ is a finite measure on $(0,\infty)$. Using this one can deduce that $f^{(n)}(x)\rightarrow0$ as $x\rightarrow\infty$ for all non-negative integers $n$. Also, that $q$-scale functions are infinitely differentiable, and odd derivatives are strictly positive and strictly log-convex.

Scale functions are present in a vast majority of fluctuation identities of spectrally negative L\'evy processes and, as we will see next, they appear in the setting of the optimal dividend problem. 

\section{Solution of the constrained de Finetti's problem}\label{SecdeFinetti}

Let us consider the case where the reserves process $X$ is a spectrally negative L\'evy process. We will solve the constrained problem in both scenarios, first without transaction cost, and then for $\beta>0$.

\subsection{No transaction cost}

As mentioned before, optimal strategies for Problem \eqref{P2} in this setting are barrier strategies. If we consider the dividend barrier strategy at level $b$, $D^b$, we have that $D_t^{b}=(b\vee \overline{X}_t)-b$ for $t\geq 0$, where $\overline{X}_t:=\underset{0\leq s\leq t}{\sup} X_s$ and therefore $X_t^{D^b}=b - [(b\vee \overline{X}_t)-X_t]$. The process in square brackets is a type of reflected process. More generally, for a given process $Y$ we define $\hat{Y}_t^s:=s\vee \overline{Y}_t-Y_t, t\geq0$, known as the \emph{reflected process at its supremum with initial value $s$}. For such processes also define the exit time $\hat{\sigma}_k^s:=\inf \{t>0:\hat{Y}_t^s>k\}$. From the previous definitions it follows that $X_t^{D^b}=b-\hat{X}_t^{b}$ and $\tau^{D^{b}}=\hat{\sigma}_{b}^{b}$. This simple observation provides an useful identity for the value function when a barrier strategy is followed. The next identity, first presented in \cite{Gerber72}, can be found in \cite{kyprianou2014}.

\begin{prop}
Let $b>0$ and consider the dividend process $D_t^{b}=X_t -(b- \hat{X}_t)$. For $x\in [0,b]$,
\begin{equation}\label{Valuefunctqscale}
\V^{D^b}(x)=\E_x\left[\int_0^{\tau^{D^{b}}-} e^{-qt}dD_t^b\right]=\frac{W^{(q)}(x)}{W^{(q)'}_{+}(b)},
\end{equation}
where $W^{(q)'}_{+}(b)$ is understood as the right derivative of $W^{(q)}$ at $b$.
\end{prop}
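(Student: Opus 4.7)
The plan is to prove \eqref{Valuefunctqscale} by first reducing the identity to evaluating $\V^{D^b}(b)$ via the two-sided exit formula, and then computing that boundary value through a lump-sum approximation.

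For $x\in[0,b)$ I would argue that under the strategy $D^b$ no dividends accrue before $\tau_b^+:=\inf\{t\geq0:X_t\geq b\}$, while on $\{\tau_0^-\leq\tau_b^+\}$ with $\tau_0^-:=\inf\{t\geq0:X_t<0\}$ ruin occurs without any payout; on the complementary event spectral negativity forces $X_{\tau_b^+}=b$ almost surely. Applying the strong Markov property at $\tau_b^+\wedge\tau_0^-$ together with the classical identity
$$\E_x\bigl[e^{-q\tau_b^+}\1_{\{\tau_b^+<\tau_0^-\}}\bigr]=\frac{W^{(q)}(x)}{W^{(q)}(b)},\qquad x\in[0,b],$$
yields $\V^{D^b}(x)=(W^{(q)}(x)/W^{(q)}(b))\,\V^{D^b}(b)$, so the task reduces to showing $\V^{D^b}(b)=W^{(q)}(b)/W^{(q)'}_+(b)$.

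For the boundary value I would introduce the lump-sum approximation $D^{b,\epsilon}$: starting at $b$, wait until $X$ reaches $b+\epsilon$, pay out $\epsilon$ to return to $b$, and iterate until ruin. By spatial homogeneity each cycle has the same distribution, and a second application of the two-sided exit identity gives a discounted cycle success probability $W^{(q)}(b)/W^{(q)}(b+\epsilon)$. Summing the geometric series over successive cycles gives
$$\V^{D^{b,\epsilon}}(b)=\frac{\epsilon\,W^{(q)}(b)}{W^{(q)}(b+\epsilon)-W^{(q)}(b)},$$
which converges to $W^{(q)}(b)/W^{(q)'}_+(b)$ as $\epsilon\downarrow 0$ by definition of the right derivative.

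The main obstacle is the convergence $\V^{D^{b,\epsilon}}(b)\to\V^{D^b}(b)$. Pathwise $D_t^{b,\epsilon}\leq D_t^b\leq D_t^{b,\epsilon}+\epsilon$, so the reserves under the lump-sum strategy lie within $\epsilon$ of those under reflection, from which one deduces that the ruin times $\tau^{D^{b,\epsilon}}$ decrease to $\tau^{D^b}$ and that the cumulative dividend measures $dD^{b,\epsilon}$ converge weakly to $dD^b$ on the common survival interval. Combined with the pathwise bound $D_t^b\leq\overline{X}_t$ and the standard integrability of $\int_0^\infty e^{-qt}\,d\overline{X}_t$ for $q>0$, the dominated convergence theorem yields the limit and closes the argument.
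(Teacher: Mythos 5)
The paper does not actually prove this proposition: it is imported from Gerber (1972) and Kyprianou (2014), where the identity is obtained from fluctuation/excursion theory for the process reflected at its supremum (the expected discounted increase of $b\vee\overline{X}$ up to the exit time of the reflected process). Your route is genuinely different and more elementary. The reduction to $x=b$ is correct: before $\tau_b^+$ no dividends accrue, spectral negativity gives $X_{\tau_b^+}=b$ on $\{\tau_b^+<\tau_0^-\}$, and the two-sided exit identity plus the strong Markov property yield $\V^{D^b}(x)=\bigl(W^{(q)}(x)/W^{(q)}(b)\bigr)\V^{D^b}(b)$. The lump-sum computation is also exact: the cycles are i.i.d.\ by the strong Markov property (each starts afresh from $b$ with barriers $0$ and $b+\epsilon$), the geometric series gives $\V^{D^{b,\epsilon}}(b)=\epsilon W^{(q)}(b)/\bigl(W^{(q)}(b+\epsilon)-W^{(q)}(b)\bigr)$, and the limit is $W^{(q)}(b)/W^{(q)'}_{+}(b)$; note that the existence of the right derivative is guaranteed by the log-concavity of $W^{(q)}$ (Remark \ref{logconcaveprop}), consistent with the statement of the proposition. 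What the excursion-theoretic proof buys is that it bypasses entirely the approximation step; what yours buys is transparency and no machinery beyond the two-sided exit formula.

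The only place where your argument is thinner than it should be is the passage $\V^{D^{b,\epsilon}}(b)\to\V^{D^b}(b)$. The sandwich $D_t^{b,\epsilon}\leq D_t^b\leq D_t^{b,\epsilon}+\epsilon$ holds while both strategies are alive and gives $\tau^{D^{b,\epsilon}}\geq\tau^{D^b}$, and domination by $\int_0^\infty e^{-qt}\,d\overline{X}_t$ is legitimate since $\E\bigl[\overline{X}_{e_q}\bigr]=1/\Phi(q)<\infty$. But the claim that $\tau^{D^{b,\epsilon}}$ \emph{decreases to} $\tau^{D^b}$ requires an argument in the case where $L^{D^b}$ enters $(-\infty,0)$ without a macroscopic jump: the $\epsilon$-strategy's reserve may sit in $[0,\epsilon]$ at time $\tau^{D^b}$, and one must show it is ruined shortly after (and that the extra, discounted dividends it pays on $(\tau^{D^b},\tau^{D^{b,\epsilon}})$ vanish in the limit). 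This is standard --- regularity of $0$ for $(-\infty,0)$ in the unbounded variation case, and a direct first-passage estimate in the bounded variation case --- but it is the one step that would need to be written out for the proof to be complete.
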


The previous proposition suggests that in the unconstrained de Finetti's problem, the existence of an optimal barrier strategy boils down to the existence of a minimizer of $W^{(q)'}$, hence the importance of understanding the properties of scale functions and its derivatives. It was first shown in \cite{Loeffen082} that when $W^{(q)}$ is sufficiently smooth, meaning it is once [resp. twice] continuously differentiable when X is of bounded [resp. unbounded] variation, and $W^{(q)'}$ is increasing on $(b^*,\infty)$, where $b^*$ is the largest point where $W^{(q)'}$ attains its minimum, the barrier strategy al level $b^*$ is optimal. A sufficient condition for such properties to be satisfied is for $X$ to have a L\'evy measure with complete monotone density. This work also showed that $W^{(q)'}$ is strictly convex on $(0,\infty)$. Later results in \cite{KyprianouRS10}, stated that under the weaker assumption of a log-convex density of the L\'evy measure the same result holds on $(b^*,\infty)$ but not necessarily on $(0,\infty)$. Finally, \cite{Loeffen10} made a final improvement showing that if the tail of the L\'evy measure is log-convex the scale function of the spectrally negative L\'evy process has a log-convex derivative.

\subsubsection{Solution of \eqref{P2}}\label{dualclassical}

Likewise, the role of the scale functions in the setting of the constrained dividend problem is very important. The next result follows from \cite{Avram04} and it is also shown in \cite{avram2015}.

\begin{prop}\label{Lagrangianbarrier}
For a sufficiently smooth $q$-scale function $W^{(q)}$, the function $\V_\Lambda^{D^b}$, where $D^b$ is the barrier strategy at level $b \geq 0$, for $x\geq 0$ is given by
\begin{align}\label{lagrangianbarrier}
\V_\Lambda^{D^b}(x)=
\begin{cases}
W^{(q)}(x)\Big[\frac{1+q\Lambda W^{(q)}(b)}{W^{(q)'}(b)}\Big]-\Lambda Z^{(q)}(x) + \Lambda K &\text{if}\quad x\leq b\\
x-b+\mathcal{V}_\Lambda^{D^b}(b) &\text{if}\quad x>b. 
\end{cases}
\end{align}
\end{prop}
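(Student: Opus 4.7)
The plan is to exploit the barrier-strategy structure and reduce the computation of $\V_\Lambda^{D^b}$ to two ingredients: the value function $\V^{D^b}$ (given already by Proposition~4.1 in the excerpt) and the Laplace transform of the ruin time $\tau^{D^b}$. Once both are known explicitly, the formula \eqref{lagrangianbarrier} follows by direct algebra from the definition
\[
\V_\Lambda^{D^b}(x) = \V^{D^b}(x) - \Lambda\,\E_x\!\left[e^{-q\tau^{D^b}}\right] + \Lambda K.
\]

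For $x\in[0,b]$, the first step is to recall that $L_t^{D^b}=b-\hat{X}_t^{b}$, so the ruin event $\{L_t^{D^b}<0\}$ coincides with $\{\hat{X}_t^{b}>b\}$, i.e., $\tau^{D^b}=\hat{\sigma}_b^{b}$. The Laplace transform of this exit time for the reflected-at-supremum process is a classical fluctuation identity for spectrally negative L\'evy processes (see, e.g., the reference \cite{kyprianou2014}): for $x\in[0,b]$,
\[
\E_x\!\left[e^{-q\tau^{D^b}}\right]
= Z^{(q)}(x) - q\,W^{(q)}(x)\,\frac{W^{(q)}(b)}{W^{(q)'}_{+}(b)}.
\]
Combining this with $\V^{D^b}(x)=W^{(q)}(x)/W^{(q)'}_{+}(b)$ and collecting the coefficients of $W^{(q)}(x)$ produces exactly the first branch of \eqref{lagrangianbarrier}.

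For $x>b$, the barrier strategy forces an immediate lump-sum payment of $x-b$ at time $0$, bringing the reserves down to $b$. Since there is no transaction cost in this subsection, this payment contributes $x-b$ to the dividend functional, and by the strong Markov property applied at time $0$ (the post-jump process is the same barrier strategy started from reserves $b$) both $\V^{D^b}$ and the law of $\tau^{D^b}$ agree with their values under $\P_b$. Hence
\[
\V_\Lambda^{D^b}(x) = (x-b) + \V_\Lambda^{D^b}(b),
\]
which is the second branch.

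The only genuinely nontrivial step is invoking the correct fluctuation identity for $\E_x[e^{-q\tau^{D^b}}]$; the sufficient-smoothness hypothesis on $W^{(q)}$ is what makes the appearance of $W^{(q)'}_{+}(b)$ (rather than a one-sided derivative with ambiguous value) legitimate and guarantees that the reflecting boundary term in the associated excursion-theoretic computation is well defined. Once the identity is in hand, the rest is bookkeeping.
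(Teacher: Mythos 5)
Your proposal is correct and is essentially the derivation the paper intends: the paper offers no proof of its own but defers to \cite{Avram04} and \cite{avram2015}, where the result is obtained exactly as you do, by splitting $\V_\Lambda^{D^b}$ into $\V^{D^b}$ (Equation \eqref{Valuefunctqscale}) and the reflected-process exit identity for $\E_x[e^{-q\tau^{D^b}}]$ --- the very identity the paper itself records later as \eqref{Psi} --- plus the immediate lump-sum argument for $x>b$. No gaps; the algebra collecting the coefficient of $W^{(q)}(x)$ gives the stated formula.
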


The solution of \eqref{P2} can be extracted from \cite{Loeffen08}. In that work it was proven that barrier strategies are optimal under the assumption of complete monotonicity of the L\'evy measure, so in this section we will make this assumption. To certify the optimality of an admissible barrier strategy two steps are carried out: First use \eqref{lagrangianbarrier} to propose a candidate for optimal barrier level, and second certify optimality with a verification lemma argument. To understand the solution of \eqref{P2}, we will elaborate on how such candidate is proposed.

In light of Proposition \ref{Lagrangianbarrier}, define the function $\zeta_{\Lambda}:[0,\infty)\rightarrow \R$ by
\begin{align}\label{functionZ}
\zeta_{\Lambda}(\varsigma):=\frac{1+q\Lambda W^{(q)}(\varsigma)}{W^{(q)'}(\varsigma)},\quad \varsigma>0
\end{align}
and $\zeta_{\Lambda}(0):=\underset{\varsigma\downarrow 0}{\lim}\, \zeta_{\Lambda}(\varsigma)$. Now, the barrier strategy at level
\begin{align}\label{bLambda}
b_\Lambda:=\sup \{b:\zeta_{\Lambda}(b)\geq \zeta_{\Lambda}(\varsigma),\text{ for all } \varsigma\geq 0\} 
\end{align}
is proposed as candidate optimal strategy for \eqref{P2}. 
\begin{remark}\label{complmonotprop}
From \cite{Loeffen08} we know that when the L\'evy measure $\nu$ is assumed to have completely monotone density, the set of maxima of the function $\zeta_{\Lambda}$ consists of a single point for all $\Lambda$. In fact, $\zeta_{\Lambda}$ is strictly increasing in $(0,b_{\Lambda})$ and strictly decreasing in $(b_{\Lambda},\infty)$ .
\end{remark}

We now state the theorem that characterizes the solution of \eqref{P2}. 
\begin{thm}[Optimal strategy for \eqref{P2}]
Suppose the L\'evy measure of the spectrally negative L\'evy process $X$ has a completely monotone density. Then the optimal strategy consists of a barrier strategy at level $b_\Lambda$ given by \eqref{bLambda}, and the corresponding value function is given by equation \eqref{lagrangianbarrier}.
\end{thm}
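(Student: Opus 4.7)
The plan is to follow the verification strategy used in \cite{Loeffen08} for the unconstrained problem, adapted to the Gerber--Shiu penalty structure of \cite{avram2015}. Let $v(x):=\V_\Lambda^{D^{b_\Lambda}}(x)$ denote the candidate value function, given explicitly by \eqref{lagrangianbarrier} with $b=b_\Lambda$, and extended to $x<0$ by $v(x):=\Lambda(K-1)$. This extension is consistent both with the formula once one sets $W^{(q)}\equiv 0$ and $Z^{(q)}\equiv 1$ on $(-\infty,0)$, and with the natural boundary value of the Lagrangian objective at ruin.

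First I would establish the regularity of $v$ needed for It\^o's formula. The representation \eqref{qscalecompmon} under complete monotonicity of the L\'evy density yields $W^{(q)}\in C^\infty(0,\infty)$, and hence so is $Z^{(q)}$. Combined with the linearity $v(x)=x-b_\Lambda+v(b_\Lambda)$ on $(b_\Lambda,\infty)$, the first-order condition $\zeta'_\Lambda(b_\Lambda)=0$ --- valid whenever $b_\Lambda>0$ by Remark \ref{complmonotprop} --- provides the smooth fit of $v$ at $b_\Lambda$: $C^1$ in the bounded variation case and $C^2$ in the unbounded variation case, which is enough to apply the It\^o--L\'evy formula.

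Next I would verify the HJB variational inequality
\begin{equation*}
\max\bigl\{(\mathcal{L}-q)v(x),\ 1-v'(x)\bigr\}\leq 0,\qquad x>0,
\end{equation*}
where $\mathcal{L}$ is the infinitesimal generator of $X$. On $[0,b_\Lambda]$, the identities $(\mathcal{L}-q)W^{(q)}=0$ and $(\mathcal{L}-q)Z^{(q)}=0$ on $(0,\infty)$, together with $(\mathcal{L}-q)(\Lambda K)=-q\Lambda K$, yield $(\mathcal{L}-q)v(x)=-q\Lambda K\leq 0$; and $v'(x)\geq 1$ reduces to $\zeta_\Lambda(x)\leq\zeta_\Lambda(b_\Lambda)$, which holds by the very definition \eqref{bLambda} of $b_\Lambda$ together with Remark \ref{complmonotprop}. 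On $(b_\Lambda,\infty)$, the second entry of the max is zero and the first must be checked directly; using the linearity of $v$ and its extension below zero, this collapses to a scalar inequality on $W^{(q)}$ and its derivatives that holds under complete monotonicity, and is precisely the inequality exploited in \cite{Loeffen08} for $\Lambda=0$.

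Finally, for any admissible $D\in\Theta$, I would apply the It\^o--L\'evy formula to $e^{-q(t\wedge\tau^D)}v(L_{t\wedge\tau^D}^D)$, combine the variational inequality with the admissibility constraint $\Delta D_t\leq L_{t-}^D$ to obtain a supermartingale majorising the running payoff, and let $t\to\infty$ (justified by the linear growth of $v$ and the estimate $\E_x[e^{-qt}\overline{X}_t]<\infty$) to conclude $v(x)\geq\V_\Lambda^D(x)$. Equality at $D=D^{b_\Lambda}$ is Proposition \ref{Lagrangianbarrier}, completing the proof. The main obstacle is verifying $(\mathcal{L}-q)v\leq 0$ on $(b_\Lambda,\infty)$: without the complete monotonicity assumption on the L\'evy density this step can fail --- the counterexample in \cite{azcuemuler2005} shows that even for $\Lambda=0$ a barrier strategy need not be optimal --- so this hypothesis is essential to the argument.
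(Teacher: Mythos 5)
Your proposal is correct and follows essentially the same route as the paper, which states exactly this two-step plan (propose $b_\Lambda$ as the maximizer of $\zeta_\Lambda$, then certify optimality via a verification lemma) and outsources the details to \cite{Loeffen08} and \cite{avram2015}. The one step you leave schematic --- the generator inequality $(\mathcal{L}-q)v\leq 0$ on $(b_\Lambda,\infty)$ --- is precisely the part those references establish from complete monotonicity, so your sketch matches the intended argument.
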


\subsubsection{Solution of \eqref{P1}}\label{P1classical}

We now proceed to solve \eqref{P1} following the same ideas as in \cite{HJ15}. Let $b_0$ be the optimal barrier for \eqref{P2} with $\Lambda=0$, that is, the optimal barrier for the unconstrained problem. Let  $\bar{\Lambda}:=\sup \{\Lambda\geq 0: b_\Lambda=0\}\vee0$. Note that if $\bar{\Lambda}>0$, then $b_0=0$. Now, since $b_{\Lambda}$ is the only maximum of $\zeta_{\Lambda}$, we consider the function $\Lambda:[b_0,\infty)\rightarrow \R_+$ defined by
\begin{align}\label{btolambdamap}
\Lambda(b):=\begin{cases} 
0 & \mbox{if } b=b_0\\
\frac{-W^{(q)''}(b)}{q[W^{(q)}(b) W^{(q)''}(b)-[W^{(q)'}(b)]^2]}  &   \mbox{if } b>b_0.
\end{cases}
\end{align}

We will show that this function establishes a biyection between $b$ and $\Lambda$ such that $b_{\Lambda(b)}=b$. Figure \ref{graphLambda} shows the behavior of the map when $\bar{\Lambda}=0$ and $\bar{\Lambda}>0$. 

\begin{figure}[t]
      \includegraphics[width=0.46\linewidth]{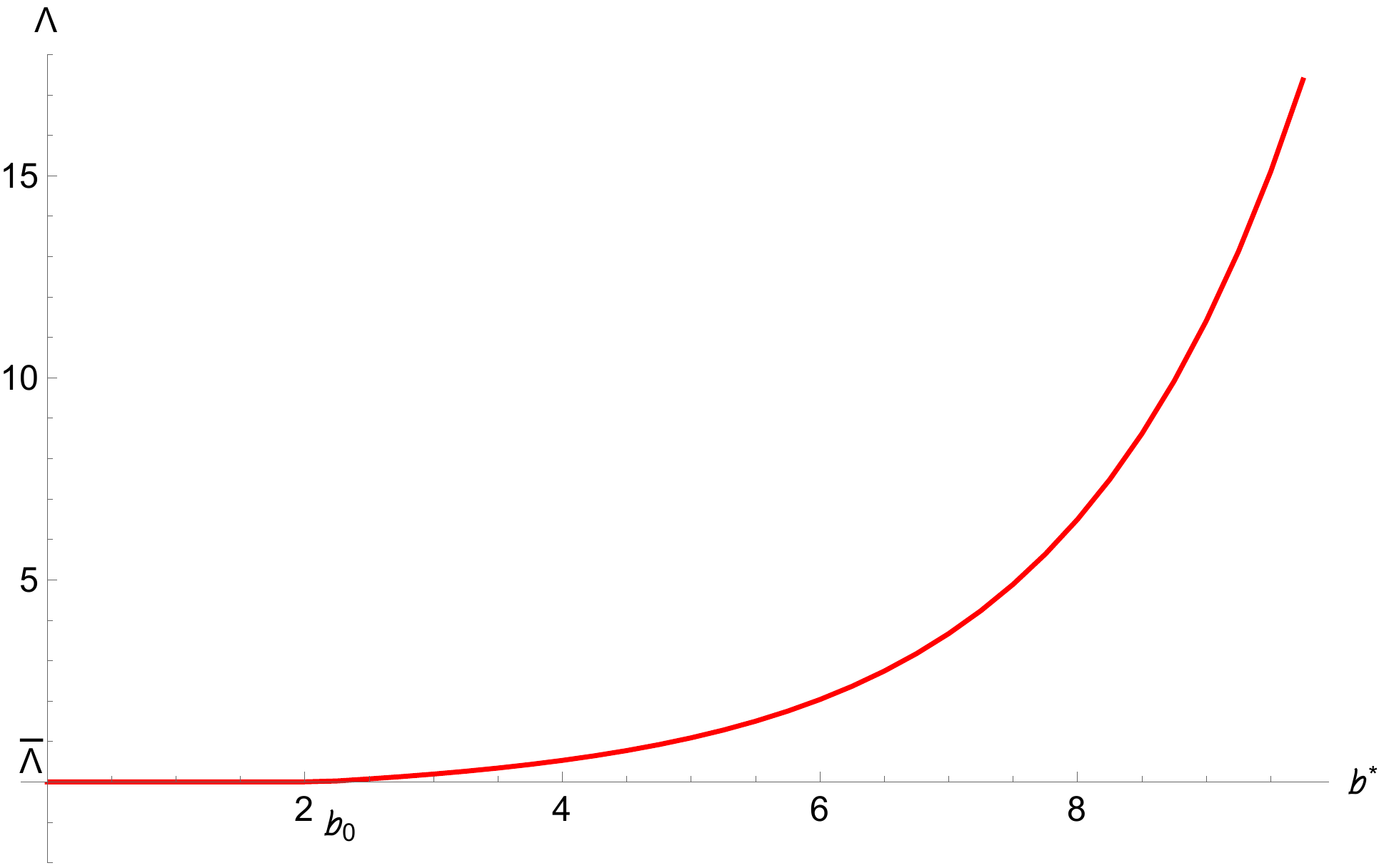}
\hfill
      \includegraphics[width=0.46\linewidth]{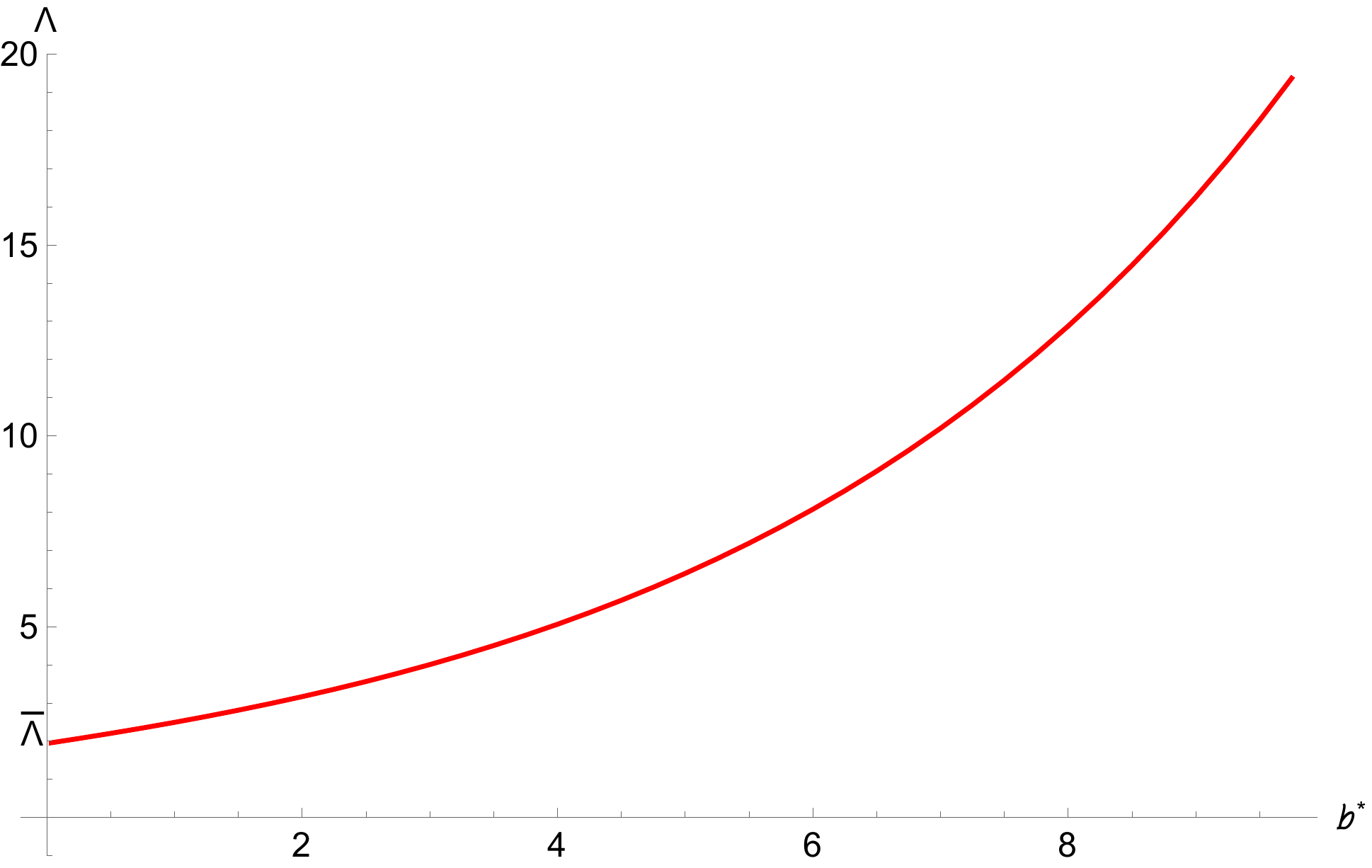}
      \caption{The map $\Lambda(b)$. On the left $\bar{\Lambda}=0$ and $b_0>0$. On the right $\bar{\Lambda}>0$ and $b_0=0$. These maps correspond to different choices of parameters for the Cram\'er-Lundberg model with exponential claims. See \cite{HJ15} for the explicit formula of the map.}\label{graphLambda}
\end{figure}

\begin{prop}\label{btolambdaprop}
For each $b \in (b_0,\infty)$ the barrier strategy at level $b$ is optimal for \eqref{P2} with $\Lambda(b)$. Also, this map is strictly increasing.
\begin{proof}
We want to show that the function $\Lambda(b)$ is well defined and maps barrier levels $b>b_0$ to $\Lambda(b)$ such that the pair $(D^b,\Lambda(b))$ is optimal for \eqref{P2}. To see this, first  recall from \cite{Loeffen08} that $W^{(q)''}(b)$ is strictly positive for $b>b_0$. Also, the log-concavity of the $q$-scale function implies that $W^{(q)}(b) W^{(q)''}(b)-[W^{(q)'}(b)]^2\leq0$ and we claim it cannot be 0. Since
\begin{align*}
\zeta'_{\Lambda}(\varsigma)=-\frac{W^{(q)''}(\varsigma)+\Lambda q[W^{(q)}(\varsigma) W^{(q)''}(\varsigma)-[W^{(q)'}(\varsigma)]^2]}{[W^{(q)'}(\varsigma)]^2},
\end{align*}
this would prove that $\zeta'_{\Lambda(b)}(b)=0$ for $b>b_0$ and therefore the pair $(D^b,\Lambda(b))$ is optimal for \eqref{P2}. To prove the claim we argue by contradiction. Let $\hat{b}>b_0$ be the minimum such that $W^{(q)}(\hat{b}) W^{(q)''}(\hat{b})-[W^{(q)'}(\hat{b})]^2=0$. We have two possibilities: Either $W^{(q)}(b') W^{(q)''}(b')-[W^{(q)'}(b')]^2<0$ for a some value $b'>\hat{b}$, or the expression equals zero in $[\hat{b},\infty)$. In the first case, by the continuity of $\Lambda(\cdot)$ in its domain, we will have two values $b''<\hat{b}<b'$ such that $\Lambda(b'')=\Lambda(b')$. This implies that those two barrier values in $(b_0,\infty)$ are optimal for \eqref{P2} for the same value of $\Lambda$, which contradicts Remark \ref{complmonotprop}. In the later case, it follows that the tail of $\log(W^{(q)})$ is linear, and so is the tail of $\log(W^{(q)'})$, which contradicts the strict log-convexity of $W'$ on $(0,\infty)$, see Remark \ref{complmonotprop}. Finally, as $W^{(q)'}(x)$ is strictly log-convex and strictly positive on $(0,\infty)$,
\begin{align*}
\frac{d\Lambda(b)}{db}=\frac{W^{(q)'}(b)[W^{(q)'}(b)W^{(q)'''}(b)-[W^{(q)''}(b)]^2]}{q[W^{(q)}(b) W^{(q)''}(b)-[W^{(q)'}(b)]^2]^2}
\end{align*}
is always positive and so the map is strictly increasing.
\end{proof}
\end{prop}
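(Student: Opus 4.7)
The plan is to start from the critical-point equation $\zeta'_{\Lambda}(b)=0$. Differentiating \eqref{functionZ} directly gives
\begin{align*}
\zeta'_{\Lambda}(\varsigma)=-\frac{W^{(q)''}(\varsigma)+\Lambda q\bigl[W^{(q)}(\varsigma)W^{(q)''}(\varsigma)-[W^{(q)'}(\varsigma)]^2\bigr]}{[W^{(q)'}(\varsigma)]^2},
\end{align*}
so solving $\zeta'_{\Lambda}(b)=0$ for $\Lambda$ recovers exactly the definition \eqref{btolambdamap}. My first step is therefore to verify that this formula produces a nonnegative number for each $b>b_0$, i.e.\ that the denominator does not vanish. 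From \cite{Loeffen08} we have, under complete monotonicity, $W^{(q)''}(b)>0$ for $b>b_0$, and log-concavity of $W^{(q)}$ on $(0,\infty)$ (Remark \ref{logconcaveprop}) already gives $W^{(q)}(b)W^{(q)''}(b)-[W^{(q)'}(b)]^2\leq 0$.

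The main obstacle, and therefore the next step, is upgrading this weak inequality to a strict one on the whole of $(b_0,\infty)$. I would proceed by contradiction: let $\hat b>b_0$ be the smallest zero of $W^{(q)}W^{(q)''}-[W^{(q)'}]^2$ and consider two cases. If the expression is strictly negative at some $b'>\hat b$, then on each component of $(b_0,\infty)$ where the denominator is nonzero the map $\Lambda(\cdot)$ is continuous, and an intermediate value argument produces $b''<\hat b<b'$ with $\Lambda(b'')=\Lambda(b')$. Both points would then satisfy $\zeta'_{\Lambda(b')}(\cdot)=0$ and be interior maxima of $\zeta_{\Lambda(b')}$, contradicting the uniqueness of the maximizer supplied by Remark \ref{complmonotprop}. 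If instead the expression vanishes identically on $[\hat b,\infty)$, then $(\log W^{(q)})''\equiv 0$ there, forcing $W^{(q)}$ to be exponential and hence $W^{(q)'}$ log-linear on that tail, contradicting the strict log-convexity of $W^{(q)'}$ recorded after \eqref{qscalecompmon}.

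Once strict negativity of the denominator is secured, $\Lambda(b)$ is well defined and positive, and by construction $\zeta'_{\Lambda(b)}(b)=0$. Remark \ref{complmonotprop} says $\zeta_{\Lambda}$ has a unique maximizer $b_\Lambda$, so this critical point must coincide with it, i.e.\ $b_{\Lambda(b)}=b$. Combined with the optimality of barrier strategies for \eqref{P2} under complete monotonicity, this shows that the barrier strategy at level $b$ is optimal for \eqref{P2} with Lagrange parameter $\Lambda(b)$.

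For strict monotonicity I would simply differentiate the closed-form expression for $\Lambda(b)$. The quotient rule gives
\begin{align*}
\frac{d\Lambda(b)}{db}=\frac{W^{(q)'}(b)\bigl[W^{(q)'}(b)W^{(q)'''}(b)-[W^{(q)''}(b)]^2\bigr]}{q\bigl[W^{(q)}(b)W^{(q)''}(b)-[W^{(q)'}(b)]^2\bigr]^2}.
\end{align*}
The denominator is strictly positive by the previous step, and the bracket in the numerator is the numerator of $(\log W^{(q)'})''$, which is strictly positive by the strict log-convexity of $W^{(q)'}$ noted after \eqref{qscalecompmon}; since $W^{(q)'}(b)>0$, we conclude $\Lambda'(b)>0$ on $(b_0,\infty)$. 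The crux of the whole argument is thus entirely the strict negativity claim; everything else reduces to differentiation and invocations of the structural properties of $W^{(q)}$ under complete monotonicity.
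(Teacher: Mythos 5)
Your proposal is correct and follows essentially the same route as the paper's own proof: the same identification of $\Lambda(b)$ from the critical-point equation $\zeta'_{\Lambda}(b)=0$, the same two-case contradiction argument (intermediate value versus an identically vanishing tail contradicting strict log-convexity of $W^{(q)'}$) to upgrade the log-concavity inequality to a strict one, and the same quotient-rule computation for strict monotonicity. No substantive differences to report.
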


As a consequence of the proof of the previous proposition we have the following important property of $q$-scale functions.

\begin{corollary}\label{strictlogconcave}
$W^{(q)}(x)$ is strictly log-concave in $(b_0,\infty)$.
\end{corollary}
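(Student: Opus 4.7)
The plan is to reduce this corollary to the strict inequality that was already secured inside the proof of Proposition \ref{btolambdaprop}. Since $W^{(q)}$ is strictly positive on $(0,\infty)$, one has the identity
$$\frac{d^2}{dx^2}\log W^{(q)}(x) = \frac{W^{(q)}(x)\, W^{(q)''}(x) - [W^{(q)'}(x)]^2}{[W^{(q)}(x)]^2},$$
so strict log-concavity of $W^{(q)}$ on $(b_0,\infty)$ is equivalent to the strict inequality
$$W^{(q)}(x)\, W^{(q)''}(x) - [W^{(q)'}(x)]^2 < 0, \qquad x > b_0.$$
Under the complete monotonicity assumption, $W^{(q)}$ is twice continuously differentiable on $(0,\infty)$ (via the decomposition \eqref{qscalecompmon}), so this equivalence is legitimate.

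Next, I would invoke Remark \ref{logconcaveprop} to obtain the non-strict version $W^{(q)}(x)\, W^{(q)''}(x) - [W^{(q)'}(x)]^2 \leq 0$ throughout $(0,\infty)$. The only work left is to rule out equality on $(b_0,\infty)$, and this is exactly the contradiction argument already carried out in Proposition \ref{btolambdaprop}: if $\hat{b} > b_0$ were the smallest point of equality, then either strict negativity would resume at some $b' > \hat{b}$ — in which case continuity of the map $\Lambda(\cdot)$ together with the uniqueness of the maximizer of $\zeta_{\Lambda}$ supplied by Remark \ref{complmonotprop} would be violated, as two distinct barrier levels would be optimal for the same $\Lambda$ — or the expression would vanish identically on $[\hat{b},\infty)$, forcing $\log W^{(q)}$ and hence $\log W^{(q)'}$ to be linear at infinity, which contradicts the strict log-convexity of $W^{(q)'}$ recalled in Remark \ref{complmonotprop}.

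I do not anticipate any serious obstacle, since the substantive analytic content was already completed in Proposition \ref{btolambdaprop}; the corollary simply repackages that intermediate claim as a standalone structural property of $q$-scale functions under completely monotone L\'evy density. The only care required is to make explicit the equivalence between strict log-concavity at $x$ and the strict sign of $W^{(q)}(x) W^{(q)''}(x) - [W^{(q)'}(x)]^2$, and to note that positivity of $W^{(q)}$ on $(0,\infty)$ makes the logarithm well defined so that this equivalence can be used.
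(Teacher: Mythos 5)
Your proposal is correct and matches the paper's own treatment exactly: the corollary is presented there precisely as a consequence of the strict inequality $W^{(q)}(b)W^{(q)''}(b)-[W^{(q)'}(b)]^2<0$ for $b>b_0$ established inside the proof of Proposition \ref{btolambdaprop}, combined with the second-derivative identity for $\log W^{(q)}$. One small nitpick: strict negativity of $\bigl(\log W^{(q)}\bigr)''$ \emph{implies} strict log-concavity but is not literally \emph{equivalent} to it (a strictly concave $C^2$ function may have a vanishing second derivative at isolated points); since you only use the correct direction of the implication, this does not affect the argument.
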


The behavior of the map $\Lambda(b)$ at infinity will be important for the final result of this section. 
\begin{lemma}\label{lambdainftylemma}
$\Lambda(b)\rightarrow \infty$ as $b\rightarrow \infty$.
\end{lemma}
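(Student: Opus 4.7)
The plan is to rewrite $\Lambda(b)$ so that the asymptotic behavior becomes transparent, and then feed in the representation \eqref{qscalecompmon}. First I would divide the numerator and denominator in the definition of $\Lambda(b)$ by $W^{(q)''}(b)$, which is strictly positive on $(b_0,\infty)$, to obtain
\[
\Lambda(b) = \frac{1}{q\left(\dfrac{[W^{(q)'}(b)]^2}{W^{(q)''}(b)} - W^{(q)}(b)\right)}.
\]
By Corollary \ref{strictlogconcave}, the expression in parentheses is strictly positive on $(b_0,\infty)$. Hence the lemma reduces to proving
\[
\Delta(b) := \frac{[W^{(q)'}(b)]^2}{W^{(q)''}(b)} - W^{(q)}(b) \longrightarrow 0 \quad \text{as } b \to \infty.
\]

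Next I would invoke the representation \eqref{qscalecompmon}, writing $W^{(q)}(x) = C e^{\varphi x} - f(x)$ with $C := 1/\psi'(\Phi(q)) > 0$, $\varphi := \Phi(q) > 0$, and $f$ completely monotone satisfying $f^{(n)}(x) \to 0$ as $x \to \infty$ for every integer $n \geq 0$. A direct expansion then yields the identity
\[
[W^{(q)'}(x)]^2 - W^{(q)}(x) W^{(q)''}(x) = C e^{\varphi x}\bigl(\varphi^2 f(x) - 2\varphi f'(x) + f''(x)\bigr) + [f'(x)]^2 - f(x) f''(x),
\]
together with $W^{(q)''}(x) = C\varphi^2 e^{\varphi x} - f''(x)$. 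Dividing both numerator and denominator by $e^{\varphi x}$ I would then write
\[
\Delta(b) = \frac{C\bigl(\varphi^2 f(b) - 2\varphi f'(b) + f''(b)\bigr) + \bigl([f'(b)]^2 - f(b) f''(b)\bigr) e^{-\varphi b}}{C\varphi^2 - f''(b) e^{-\varphi b}}.
\]

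The final step is to pass to the limit $b \to \infty$: since $f$, $f'$ and $f''$ all vanish at infinity, every term in the numerator tends to $0$ while the denominator tends to $C\varphi^2 > 0$. Combined with the strict positivity of $\Delta$ from Corollary \ref{strictlogconcave}, this gives $\Delta(b) \to 0^+$ and hence $\Lambda(b) \to +\infty$. The main step where care is needed is the algebraic identity for $[W^{(q)'}]^2 - W^{(q)} W^{(q)''}$ under the decomposition \eqref{qscalecompmon}; once that is in hand, the vanishing of $f^{(n)}$ at infinity closes the argument almost immediately, and the sign information from Corollary \ref{strictlogconcave} rules out divergence to $-\infty$.
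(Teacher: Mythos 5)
Your proposal is correct and follows essentially the same route as the paper: rewrite $\Lambda(b)$ as $\tfrac{1}{q}\bigl[\tfrac{[W^{(q)'}(b)]^2}{W^{(q)''}(b)}-W^{(q)}(b)\bigr]^{-1}$, substitute the decomposition \eqref{qscalecompmon}, and use that $f$ and all its derivatives vanish at infinity to conclude the bracketed quantity tends to $0$ (the paper groups the terms slightly differently but the computation is the same). Your explicit remark that Corollary \ref{strictlogconcave} forces the limit to be $0^+$ is a small, welcome clarification the paper leaves implicit.
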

\begin{proof}Note that
\begin{align*}
\Lambda(b)=\frac{1}q  \left[\frac{W^{(q)'}(b)^2}{W^{(q)''}(b)}-W^{(q)}(b)\right]^{-1},
\end{align*}
so, in order to prove the result we need to show that the term in brackets goes to 0. Using \eqref{qscalecompmon} we can obtain the following:
\begin{align*}
\frac{W^{(q)'}(b)^2}{W^{(q)''}(b)}-W^{(q)}(b)=&\frac{\left[\frac{\Phi(q)e^{\Phi(q)b}}{\psi'(\Phi(q))}- f'(b)\right]^2}{\left[\frac{\Phi(q)^2 e^{\Phi(q)b}}{\psi'(\Phi(q))}- f''(b)\right]}-\left[\frac{e^{\Phi(q)b}}{\psi'(\Phi(q))}- f(b)\right]\\
=& \frac{e^{\Phi(q)b}}{\psi'(\Phi(q))}\left[ \frac{\frac{\Phi(q)^2 e^{\Phi(q)b}}{\psi'(\Phi(q))}-\Phi(q)f'(b)}{\frac{\Phi(q)^2 e^{\Phi(q)b}}{\psi'(\Phi(q))}- f''(b)} -1\right]\\
&\hspace{1cm}+f(b)-f'(b)\left[\frac{\frac{\Phi(q)e^{\Phi(q)b}}{\psi'(\Phi(q))}- f'(b)}{\frac{\Phi(q)^2 e^{\Phi(q)b}}{\psi'(\Phi(q))}- f''(b)}\right].
\end{align*}
Since $f$ and all its derivatives vanish at infinity, we have that for $b$ large
\begin{align*}
\frac{W^{(q)'}(b)^2}{W^{(q)''}(b)}-W^{(q)}(b)=&\frac{e^{\Phi(q)b}}{\psi'(\Phi(q))}\left[ \frac{f''(b)-\Phi(q)f'(b)}{\frac{\Phi(q)^2 e^{\Phi(q)b}}{\psi'(\Phi(q))}- f''(b)}\right]+o(1)\\
=&\frac{1}{\psi'(\Phi(q))} \left[\frac{f''(b)-\Phi(q)f'(b)}{\frac{\Phi(q)^2 }{\psi'(\Phi(q))}- \frac{f''(b)}{e^{\Phi(q)b}}}\right]+o(1).
\end{align*}
Now, since
\begin{align*}
\frac{f''(b)-\Phi(q)f'(b)}{\frac{\Phi(q)^2 }{\psi'(\Phi(q))}- \frac{f''(b)}{e^{\Phi(q)b}}} \longrightarrow 0,\quad \textrm{as} \quad b\rightarrow\infty,
\end{align*}
we have the result.
\end{proof}

From the previous lemma and Proposition \ref{btolambdaprop} we obtain the following corollary.
\begin{corollary}\label{bLambdainfty}
The map $\Lambda(b)$ is one-to-one onto $(\bar{\Lambda},\infty)$. Furthermore, $b_{\Lambda}$ is strictly increasing and goes to $\infty$ as $\Lambda$ goes to $\infty$.
\end{corollary}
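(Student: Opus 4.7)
The plan is to extract the corollary from Proposition \ref{btolambdaprop} and Lemma \ref{lambdainftylemma} via the intermediate value theorem, with only a short argument needed at the left endpoint $b_0$.

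First I would check that the map $\Lambda(\cdot)$ defined in \eqref{btolambdamap} is continuous on $(b_0,\infty)$: all derivatives of $W^{(q)}$ appearing are continuous there by complete monotonicity of the L\'evy density, and the proof of Proposition \ref{btolambdaprop} already showed that the denominator $q[W^{(q)}(b)W^{(q)''}(b)-(W^{(q)'}(b))^2]$ stays strictly negative on $(b_0,\infty)$. Together with the strict monotonicity from Proposition \ref{btolambdaprop} and the limit $\Lambda(b)\to\infty$ from Lemma \ref{lambdainftylemma}, this makes $\Lambda(\cdot)$ a continuous, strictly increasing map on $(b_0,\infty)$, so by the intermediate value theorem it is a bijection from $(b_0,\infty)$ onto $(L,\infty)$, where $L:=\lim_{b\downarrow b_0}\Lambda(b)$.

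The only substantive step is identifying $L=\bar{\Lambda}$. If $b_0>0$, the remark following the definition of $\bar{\Lambda}$ forces $\bar{\Lambda}=0$; since $b_0$ is the unique minimizer of the strictly convex function $W^{(q)'}$, one has $W^{(q)''}(b_0)=0$, so the numerator of \eqref{btolambdamap} vanishes at $b_0$ while the denominator does not, giving $L=0=\bar{\Lambda}$. If $b_0=0$, I would argue from the bijection itself: for any $\Lambda>L$ there is a unique $b\in(0,\infty)$ with $\Lambda(b)=\Lambda$, and Proposition \ref{btolambdaprop} then gives $b_\Lambda=b>0$, whereas for $\Lambda\in[0,L]$ no $b>0$ satisfies $\Lambda(b)=\Lambda$, which forces the optimal barrier to be $b_\Lambda=0$; comparing with $\bar{\Lambda}=\sup\{\Lambda\ge 0:b_\Lambda=0\}\vee 0$ pins $L=\bar{\Lambda}$.

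The ``Furthermore'' part is then immediate by inversion: $b_\Lambda$ restricted to $(\bar{\Lambda},\infty)$ is the inverse of $\Lambda(\cdot)$, hence strictly increasing, and the limit $\Lambda(b)\to\infty$ inverts to $b_\Lambda\to\infty$ as $\Lambda\to\infty$. I expect the main obstacle to be the case $b_0=0$ in identifying $L$, since one cannot simply plug $b_0$ into the formula \eqref{btolambdamap}, and must instead exploit the defining property $b_{\Lambda(b)}=b$ together with the supremum definition of $\bar{\Lambda}$.
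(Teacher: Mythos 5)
Your argument is correct and follows exactly the route the paper intends: the corollary is stated there without proof as an immediate consequence of Proposition \ref{btolambdaprop} and Lemma \ref{lambdainftylemma}, and your writeup supplies precisely the missing details (continuity plus strict monotonicity plus the limit at infinity, via the intermediate value theorem). The identification of the left endpoint $L=\lim_{b\downarrow b_0}\Lambda(b)$ with $\bar{\Lambda}$ — including the separate treatment of $b_0=0$, where one cannot evaluate \eqref{btolambdamap} at the endpoint and must instead use $b_{\Lambda(b)}=b$ together with the uniqueness of the maximizer of $\zeta_\Lambda$ from Remark \ref{complmonotprop} — is the one step the paper glosses over, and you handle it correctly.
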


Now, in order the show the complementary slackness condition (condition \eqref{cond3clas} in the proposition below), we need to understand the behavior of the constraint as a function of the barrier level. Observing Equations \eqref{Valuefunctqscale} and \eqref{lagrangianbarrier}, we introduce the function 
\begin{equation}\label{Psi}
\varPsi_x(b):=\mathbb{E}_{x}\left[e^{-q \tau^{D^b}}\right]=
\begin{cases} 
Z^{(q)}(x)-q \frac{W^{(q)}(b)}{W^{(q)'}(b)}W^{(q)}(x) & \mbox{if } 0\leq x\leq b\\
\varPsi_b(b)  &   \mbox{if } x>b.
\end{cases}.
\end{equation}

\begin{prop}\label{optimalpair}
For each $x\geq0$ there exists $\bar{K}_{x}\geq0$ such that if $K>\bar{K}_{x}$, there exists $b^*$ which satisfies :
\begin{enumerate}[(i)]
\item\label{cond2clas} $\mathbb{E}_{x}\left[e^{-q \tau^{D^{b^*}}}\right]\leq K$ and
\item\label{cond3clas} $\Lambda(b^*)\left(K-\mathbb{E}_{x}\left[e^{-q \tau^{D^{b^*}}}\right]\right)=0$.
\end{enumerate}
\end{prop}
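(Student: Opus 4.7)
The plan is to view the constraint value $\varPsi_x(b)$ as a continuous, strictly decreasing function of the barrier $b$ on $[b_0,\infty)$ and then apply the intermediate value theorem. The intuition is that enlarging $b$ postpones dividend payments and hence delays ruin, driving $\varPsi_x(b)$ down. Accordingly I will produce $b^*$ as either the unconstrained optimum $b_0$ itself, when the constraint is already loose there (so complementary slackness follows from $\Lambda(b_0)=0$), or the unique $b^*>b_0$ at which the constraint is active (so $K-\varPsi_x(b^*)=0$).

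First I would show that $b\mapsto\varPsi_x(b)$ is continuous on $[0,\infty)$ and strictly decreasing on $(b_0,\infty)$. Setting $h(b):=W^{(q)}(b)/W^{(q)'}(b)$, a direct differentiation gives
$$h'(b) \;=\; 1-\frac{W^{(q)}(b)\,W^{(q)''}(b)}{[W^{(q)'}(b)]^2},$$
which is strictly positive on $(b_0,\infty)$ by the strict log-concavity of $W^{(q)}$ established in Corollary \ref{strictlogconcave}. For $b\ge x$ this yields strict monotonicity of $\varPsi_x(b)=Z^{(q)}(x)-qh(b)W^{(q)}(x)$; for $b<x$ one has $\varPsi_x(b)=\varPsi_b(b)$, and a parallel computation using $Z^{(q)\prime}(b)=qW^{(q)}(b)$ shows $\frac{d}{db}\varPsi_b(b)<0$ on $(b_0,\infty)$ by the same strict log-concavity. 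Continuity at $b=x$ follows by matching the two expressions in \eqref{Psi}.

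Second, I would compute the threshold $\bar K_x:=\lim_{b\to\infty}\varPsi_x(b)$. By \eqref{limitqfact}, $h(b)\to 1/\Phi(q)$, hence
$$\bar K_x \;=\; Z^{(q)}(x)-\frac{q}{\Phi(q)}\,W^{(q)}(x)\;\ge\;0,$$
the non-negativity being inherited from $\varPsi_x(b)\in[0,1]$. For $K>\bar K_x$ two cases then arise. If $K\ge\varPsi_x(b_0)$, I take $b^*:=b_0$: condition (i) is immediate and (ii) holds because $\Lambda(b_0)=0$ by \eqref{btolambdamap}. If instead $\bar K_x<K<\varPsi_x(b_0)$, continuity, strict monotonicity, and the intermediate value theorem produce a unique $b^*\in(b_0,\infty)$ with $\varPsi_x(b^*)=K$; then (i) holds with equality and (ii) is automatic.

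The main obstacle is the strict monotonicity on $(b_0,\infty)$, which rests essentially on Corollary \ref{strictlogconcave}: the ordinary log-concavity from Remark \ref{logconcaveprop} would only give $\varPsi_x(b)$ non-increasing, which is insufficient to single out a $b^*$ in the active-constraint regime. The limit computation and the continuity matching at $b=x$ are routine once \eqref{limitqfact} is in hand.
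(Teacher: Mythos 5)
Your proposal is correct and follows essentially the same route as the paper: both arguments reduce to showing $\varPsi_x(b)$ is strictly decreasing on $(b_0,\infty)$ via the strict log-concavity of $W^{(q)}$ from Corollary \ref{strictlogconcave} (your $h(b)=W^{(q)}(b)/W^{(q)'}(b)$ is just the reciprocal of the logarithmic derivative the paper differentiates), identify $\bar K_x$ through the limit \eqref{limitqfact}, and split into the cases $K\geq\varPsi_x(b_0)$ (take $b^*=b_0$, $\Lambda(b_0)=0$) and $\bar K_x<K<\varPsi_x(b_0)$ (intermediate value theorem gives an active constraint). No substantive difference.
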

\begin{proof}
If $x\leq b$, $\varPsi_x(b)$ is given by \eqref{Psi}. Rewriting this expression as
$$\varPsi_x(b)=-q W^{(q)}(x)\Big[\frac{d \log (W^{(q)}(b))}{db}\Big]^{-1}+Z^{(q)}(x),$$
we can easily see that
$$\frac{d \varPsi_x(b)}{db}=qW^{(q)}(x)\frac{d^2\log(W^{(q)}(b))}{db^2}\Big[\frac{d \log (W^{(q)}(b))}{db}\Big]^{-2}<0,$$
for $b\in(b_0,\infty)$, from Corollary \ref{strictlogconcave}. Otherwise, if $x>b$, then  $\varPsi_x(b)= \varPsi_b(b)$ and some calculations yield that
$$\frac{d \varPsi_b(b)}{db}=qW^{(q)}(b)\frac{d^2\log(W^{(q)}(b))}{db^2}\Big[\frac{d \log (W^{(q)}(b))}{db}\Big]^{-2},$$
which is again strictly negative for $b\in(b_0,\infty)$. So, for fixed $x$, $\varPsi_x(b)$ is strictly decreasing as a function of $b$ in $(b_0,\infty)$, and just decreasing before $b_0$. Now, let 
\begin{equation}\label{Klimitclass}
\bar{K}_{x}:=\lim\limits_{b\rightarrow\infty}\varPsi_x(b)=-q\frac{W^{(q)}(x)}{\Phi(q)}+Z^{(q)}(x),
\end{equation}
where we use \eqref{limitqfact} to find the limit. Now, if $K\geq \varPsi_{x}(b_0)$, then the unconstrained problem satisfies the restriction and therefore $b^*=b_0$ satisfies the conditions. Otherwise, if $\bar{K}_x< K < \varPsi_{x}(b_0)$,  there exists $b^*>b_0$ such that $\varPsi_{x}(b^*)=K$, since $\varPsi_x(b)$ is strictly decreasing. This $b^*$ satisfies the conditions.
\end{proof}

\begin{remark}\label{remdonothing}
Note that $\bar{K}_{x}=\mathbb{E}_{x}\left[e^{-q \tau^0}\right]$, where $\tau^0$ is the time of ruin when no dividends are paid, see also \cite{Loeffen08}.
\end{remark}

The special case $K=\bar{K}_{x}$ requires the following lemma.
\begin{lemma}\label{limitclassical}
Let $x\geq0$. If $K=\bar{K}_{x}$ then $\Lambda(b)\Big(K-\mathbb{E}_x\Big[e^{-q \tau^{D^b}}\Big]\Big)\rightarrow 0$ as $b\rightarrow \infty$.
\begin{proof}
First, note that $\Lambda(b)\left(K-\mathbb{E}_x\Big[e^{-q \tau^{D^{b}}}\Big]\right)\leq 0$ for all $b> b_0$. Also, from \eqref{Valuefunctqscale} $\V^{D^b}(x)\rightarrow 0$ as $b$ goes to $\infty$. On the other hand, from the previous remark the do-nothing strategy is feasible for \eqref{P1} and hence $0\leq V(x)$. Finally, by weak duality we have that 
\begin{align*}\label{eqlimitclassical}\nonumber
0\leq V(x)&\leq\underset{\Lambda\geq 0}\inf\,\,V_{\Lambda}(x)\\\nonumber
&\leq \underset{b\to \infty}\lim V_{\Lambda(b)}(x)\\
&=\underset{b\to \infty}\lim\Lambda(b)\Big(K-\mathbb{E}_x\Big[e^{-q \tau^{D^b}}\Big]\Big)\leq0.
\end{align*}
\end{proof}
\end{lemma}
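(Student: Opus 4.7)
The plan is to run a sandwich argument driven by weak duality together with the fact that the slackness term has a definite sign. The key observation is that once one knows $\varPsi_x(b)\downarrow K$ and that the do--nothing strategy is feasible, the only non-trivial analytic input is the decay of $\V^{D^b}(x)$.

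First I would record the sign. Proposition \ref{optimalpair} showed that $\varPsi_x(b)$ is strictly decreasing on $(b_0,\infty)$ with limit $\bar K_x=K$. Hence $K-\varPsi_x(b)\le 0$ for every $b>b_0$, and since $\Lambda(b)\ge 0$,
\begin{equation*}
\Lambda(b)\bigl(K-\mathbb{E}_x[e^{-q\tau^{D^b}}]\bigr)\le 0\qquad\text{for all } b>b_0,
\end{equation*}
which yields $\limsup_{b\to\infty}\Lambda(b)(K-\varPsi_x(b))\le 0$.

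For the matching lower bound I would invoke weak duality. By Remark \ref{remdonothing}, the do--nothing strategy satisfies $\mathbb{E}_x[e^{-q\tau^0}]=\bar K_x=K$ and contributes objective value $0$, so it is feasible for \eqref{P1} and therefore $V(x)\ge 0$. Weak duality gives $0\le V(x)\le V_{\Lambda(b)}(x)$ for every $b>b_0$. Using \eqref{lagrangian} together with \eqref{Valuefunctqscale}, I would rewrite
\begin{equation*}
V_{\Lambda(b)}(x)=\V^{D^b}(x)+\Lambda(b)\bigl(K-\varPsi_x(b)\bigr)=\frac{W^{(q)}(x)}{W^{(q)'}(b)}+\Lambda(b)\bigl(K-\varPsi_x(b)\bigr),
\end{equation*}
and note that $W^{(q)'}(b)\to\infty$ as $b\to\infty$ (an immediate consequence of \eqref{limitqfact} via \eqref{qscalecompmon}, which was already exploited in the proof of Lemma \ref{lambdainftylemma}). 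Passing to the $\liminf$ then gives $0\le\liminf_{b\to\infty}\Lambda(b)(K-\varPsi_x(b))$, and combining with the upper bound forces the limit to exist and equal $0$.

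I do not expect a substantive obstacle: the whole argument is a two-sided squeeze, and every ingredient (the monotonicity and limit of $\varPsi_x$, the identification $\bar K_x=\mathbb{E}_x[e^{-q\tau^0}]$, the divergence of $W^{(q)'}$, and weak duality) has already been established in the excerpt. A more computational alternative would substitute the explicit expression \eqref{btolambdamap} for $\Lambda(b)$ and the formula for $K-\varPsi_x(b)$ and carry out the asymptotic expansion via \eqref{qscalecompmon}; this works but is noticeably heavier, so I would prefer the duality route and only fall back on the direct calculation if some sign control above went wrong.
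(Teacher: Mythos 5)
Your argument is correct and follows essentially the same route as the paper: sign of the slackness term from the monotonicity of $\varPsi_x$, feasibility of the do--nothing strategy giving $V(x)\geq 0$, and the weak-duality squeeze $0\leq V(x)\leq V_{\Lambda(b)}(x)=\V^{D^b}(x)+\Lambda(b)(K-\varPsi_x(b))$ with $\V^{D^b}(x)\to 0$. The only difference is cosmetic — you make explicit the divergence of $W^{(q)'}(b)$ that the paper leaves implicit when citing \eqref{Valuefunctqscale}.
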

 
All this is enough to derive the main result.

\begin{thm}\label{strongduality}Let $x\geq 0$, $K\geq0$ and $V(x)$ be the value function of \eqref{P1}. Then $$V(x)\geq \underset{\Lambda\geq 0}\inf\,\,V_{\Lambda}(x)$$
 and therefore, $\underset{\Lambda\geq 0}\inf\,\,V_{\Lambda}(x)=V(x)$.
\end{thm}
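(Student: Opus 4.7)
The goal is to establish the nontrivial direction of strong duality: weak duality $V(x)\leq \inf_{\Lambda\geq 0}V_{\Lambda}(x)$ follows immediately from the minimax structure noted in Section \ref{problem}, so the substance of the theorem is the reverse bound $V(x)\geq \inf_{\Lambda\geq 0}V_{\Lambda}(x)$. The plan is to use the machinery already assembled---the one-to-one correspondence $b\leftrightarrow \Lambda(b)$ from Proposition \ref{btolambdaprop} and Corollary \ref{bLambdainfty}, together with the strict monotonicity of $\varPsi_{x}$ proved inside Proposition \ref{optimalpair}---to exhibit, for each admissible $K$, a Lagrange multiplier $\Lambda^{*}$ and a feasible barrier strategy $D^{b^{*}}$ that jointly realize the dual value and dominate the primal, thereby closing the duality gap.

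I would split according to the size of $K$ relative to the two natural thresholds $\bar{K}_{x}$ (the no-dividend constraint value from Remark \ref{remdonothing}) and $\varPsi_{x}(b_{0})$ (the constraint value at the unconstrained optimum). First, if $K\geq \varPsi_{x}(b_{0})$, then $D^{b_{0}}$ is itself feasible, so choosing $\Lambda^{*}=0$ gives $V(x)\geq \V^{D^{b_{0}}}(x)=V_{0}(x)\geq \inf_{\Lambda}V_{\Lambda}(x)$. Second, if $\bar{K}_{x}<K<\varPsi_{x}(b_{0})$, Proposition \ref{optimalpair} produces $b^{*}>b_{0}$ with $\varPsi_{x}(b^{*})=K$, and setting $\Lambda^{*}:=\Lambda(b^{*})>0$ makes $D^{b^{*}}$ an optimizer of $V_{\Lambda^{*}}(x)$ by Proposition \ref{btolambdaprop}; complementary slackness collapses the Lagrangian penalty, yielding $V_{\Lambda^{*}}(x)=\V^{D^{b^{*}}}(x)\leq V(x)$ because $D^{b^{*}}$ is feasible. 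Third, if $K<\bar{K}_{x}$, then $\E_{x}[e^{-q\tau^{D}}]\geq \bar{K}_{x}$ for every admissible $D$ (since paying dividends only advances ruin), so the problem is infeasible and both sides equal $-\infty$ by convention.

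The main obstacle is the boundary case $K=\bar{K}_{x}$, where no finite $b^{*}$ closes the complementary-slackness identity and one is forced to pass to the limit $b\to\infty$. This is precisely what Lemma \ref{limitclassical} is engineered for: writing
\begin{equation*}
V_{\Lambda(b)}(x)=\V^{D^{b}}(x)+\Lambda(b)\bigl(K-\E_{x}[e^{-q\tau^{D^{b}}}]\bigr),
\end{equation*}
the first summand tends to $0$ by combining \eqref{Valuefunctqscale} with the asymptotic \eqref{limitqfact}, and the second summand vanishes by Lemma \ref{limitclassical}, so $\inf_{\Lambda}V_{\Lambda}(x)\leq \lim_{b\to\infty}V_{\Lambda(b)}(x)=0$. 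Since the do-nothing strategy is feasible when $K=\bar{K}_{x}$, we also have $V(x)\geq 0$, which closes this case. Piecing the four cases together with weak duality yields the equality in the statement; no new fluctuation identities are needed, only careful bookkeeping of feasibility and the asymptotics of the scale functions.
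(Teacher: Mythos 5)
Your proof follows the paper's argument essentially verbatim: the same case split on $K$ relative to $\bar{K}_{x}$ and $\varPsi_{x}(b_{0})$ (the paper simply folds your first two cases into one by invoking Proposition \ref{optimalpair}), the same complementary-slackness collapse $V_{\Lambda(b^{*})}(x)=\V^{D^{b^{*}}}(x)\leq V(x)$, and the same appeal to Lemma \ref{limitclassical} in the boundary case $K=\bar{K}_{x}$. The one place where you are imprecise is the infeasible case $K<\bar{K}_{x}$: the primal value is indeed $-\infty$ as a supremum over an empty feasible set, but the dual value $\inf_{\Lambda\geq 0}V_{\Lambda}(x)$ is an infimum of \emph{finite} numbers (each $V_{\Lambda}(x)$ is attained by the barrier strategy $D^{b_{\Lambda}}$), so it is not $-\infty$ ``by convention''---this must be proved, and it is exactly what the desired inequality $V(x)\geq\inf_{\Lambda}V_{\Lambda}(x)$ requires in this case. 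The paper closes it by noting that $\E_{x}[e^{-q\tau^{D^{b}}}]>K+\epsilon$ for some $\epsilon>0$ uniformly in $b$, so $V_{\Lambda(b)}(x)=\V^{D^{b}}(x)+\Lambda(b)\bigl(K-\E_{x}[e^{-q\tau^{D^{b}}}]\bigr)<\V^{D^{b}}(x)-\Lambda(b)\epsilon$, and letting $b\to\infty$ gives $\Lambda(b)\to\infty$ (Lemma \ref{lambdainftylemma}) while $\V^{D^{b}}(x)\to 0$, forcing $\inf_{\Lambda}V_{\Lambda}(x)=-\infty$. With that one line added, your argument is complete and coincides with the paper's.
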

\begin{proof}
Fix $x\geq 0$. We consider the following cases:
\begin{itemize}
\item \underline{$K>\bar{K}_{x}$}: By Proposition \ref{optimalpair} there is $b^*$ such that
\begin{align*}
\underset{\Lambda\geq 0}\inf\,\,V_{\Lambda}(x)&\leq V_{\Lambda(b^*)}(x)\\
&= \V^{D^{b^*}}- \Lambda(b^*) \mathbb{E}_{x}\Big[  e^{-q \tau^{D^{b^*}}} \Big] + \Lambda(b^*)K\\
&= \V^{D^{b^*}}\leq V(x),
\end{align*}
where the last inequality follows since the barrier strategy $D^{b^*}$ satisfies the constraint.
\item \underline{$K=\bar{K}_{x}$}: From the proof of Lemma \ref{limitclassical}, it follows that 
$$0=\underset{\Lambda\geq 0}\inf\,\,V_{\Lambda}(x)=V(x).$$
\item \underline{$K<\bar{K}_{x}$}: Here, we have that there exists $\epsilon>0$ such that $\mathbb{E}_{x}\left[e^{-q \tau^{D^b}}\right]>K+\epsilon$ for all $b$ . Hence $$\Lambda(b)\left(K-\mathbb{E}_x\left[e^{-q \tau^{D^b}}\right]\right)<-\Lambda(b)\epsilon.$$
Letting $b \rightarrow\infty$ we obtain that 
$\underset{\Lambda\geq 0}\inf\,\,V_{\Lambda}(x)=-\infty\leq V(x)$. Note that in this case \eqref{P1} is infeasible. 
\end{itemize}
\end{proof}

\subsection{With transaction cost}

We now consider the case where $\beta>0$. We will continue assuming that L\'evy measure of the spectrally negative process $X$ has a completely monotone density.  In this case we need to consider single band strategies for $b=(b_-,b_+)$ with $b_+>b_-\geq0$ denoted by $D^b$. Using the two-sided exit above fluctuation identity, \cite{LoeffenTrans} shows the following result.
\begin{prop}
Let $b$ be a single band strategy and consider the dividend process $D_t^{b}$ with $X$ a spectrally negative L\'evy process. The function $\V^{D^b}$ with transaction cost $\beta>0$, for $x\geq0$ is given by
\begin{equation}\label{ValuefunctqscaleTrans}
\V^{D^b}(x)=\begin{cases}
W^{(q)}(x)\dfrac{b_{+}-b_{-}-\beta}{W^{(q)}(b_{+})-W^{(q)}(b_{-})}, & \mbox{if }  x\leq b_{+}\\
x-b_{-}-\beta+\V^{D^b}(b_{-}), &  \mbox{if } x>b_{+},
\end{cases}
\end{equation}
\end{prop}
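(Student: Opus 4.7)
The plan is to set up a one-step renewal-type equation by exploiting the fact that $X$ is spectrally negative, so it has no positive jumps and therefore creeps (continuously) across the upper level $b_+$ from below. Under the single band strategy $D^b$ with $b=(b_-,b_+)$, starting from $x\in[0,b_+]$, no dividend is paid and no transaction cost is incurred until the first passage time $\tau_{b_+}^+:=\inf\{t\geq 0: X_t>b_+\}$. If $\tau_{b_+}^+>\tau_0^-$ (ruin first) nothing is ever collected; otherwise, since $X_{\tau_{b_+}^+}=b_+$ a.s., a lump payment reducing the reserves to $b_-$ is made, yielding net cash flow $b_+-b_--\beta$ and a transaction cost $\beta$, and the dynamics restart from $b_-$ by the strong Markov property.

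Hence, writing $h(x):=\E_x[e^{-q\tau_{b_+}^+}\1_{\tau_{b_+}^+<\tau_0^-}]$ and using the classical two-sided upward exit identity for spectrally negative L\'evy processes
\[
h(x)=\frac{W^{(q)}(x)}{W^{(q)}(b_+)}, \qquad 0\leq x\leq b_+,
\]
the strong Markov property at $\tau_{b_+}^+$ yields the functional equation
\[
\V^{D^b}(x)=h(x)\bigl(b_+-b_--\beta+\V^{D^b}(b_-)\bigr), \qquad x\in[0,b_+].
\]
Setting $x=b_-$ and solving the resulting linear equation for $\V^{D^b}(b_-)$ gives
\[
\V^{D^b}(b_-)=\frac{W^{(q)}(b_-)\,(b_+-b_--\beta)}{W^{(q)}(b_+)-W^{(q)}(b_-)},
\]
and plugging this back into the previous display produces exactly the first branch of \eqref{ValuefunctqscaleTrans}.

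For the second branch, $x>b_+$, the strategy $D^b$ pays an immediate lump-sum $x-b_-$ (with transaction cost $\beta$) to bring the reserves to $b_-$; by the strong Markov property the continuation value from $b_-$ is $\V^{D^b}(b_-)$, so $\V^{D^b}(x)=x-b_--\beta+\V^{D^b}(b_-)$, which matches the stated formula after substituting the previously computed expression for $\V^{D^b}(b_-)$.

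The only delicate point is verifying that the implicit geometric iteration of passage times is summable, so that the strong Markov argument is rigorous. Since each excursion contributes a discount factor $h(b_-)<1$ to the next reward cycle and the rewards per cycle are bounded by $b_+-b_-$, the resulting geometric series converges and Fubini can be applied; this is the routine point where one would cite the two-sided exit identities from \cite{kyprianou2014} (or the computation carried out in \cite{LoeffenTrans}) to justify the whole derivation.
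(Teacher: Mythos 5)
Your derivation is correct and follows essentially the same route the paper relies on: the paper simply cites \cite{LoeffenTrans}, where the formula is obtained exactly as you do, from the two-sided exit-above identity $\E_x[e^{-q\tau^+_{b_+}}\1_{\{\tau^+_{b_+}<\tau^-_0\}}]=W^{(q)}(x)/W^{(q)}(b_+)$ combined with upward creeping and a strong Markov renewal argument at the successive passage times of $b_+$. Your closing remark on summability is also sound, since $h(b_-)=W^{(q)}(b_-)/W^{(q)}(b_+)<1$ makes the geometric series converge.
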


\subsubsection{{Solution of \eqref{P2}}}

Also, \cite{avram2015} shows the equivalent result for the function $\V_\Lambda^{D^b}$.
\begin{prop}\label{LagrangianbarrierTrans}
The function $\V_\Lambda^{D^b}$ with transaction cost $\beta>0$, where $D^b$ is the single band strategy  $b=(b_-,b_+)$, for $x\geq0$ is given by
\begin{equation}\label{lagrangianbarrierTrans}
\V_{\Lambda}^{D^b}(x)=\begin{cases} 
W^{(q)}(x)G_{\Lambda}(b_{-},b_{+})-\Lambda Z^{(q)}(x)+\Lambda K, & \mbox{if } x\leq b_{+}\\
x-b_{-}-\beta+\V_{\Lambda}^{D^b}(b_{-}), &  \mbox{if } x>b_{+}
\end{cases}
\end{equation}
where
\begin{equation}\label{functionG}
G_{\Lambda}(b_{-},b_{+}):=\frac{b_{+}-b_{-}-\beta+q\Lambda\int_{b_{-}}^{b_{+}}W^{(q)}(z)d z }{W^{(q)}(b_{+})-W^{(q)}(b_{-})}.
\end{equation}
\end{prop}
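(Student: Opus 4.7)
The plan is straightforward. Since $\V_\Lambda^{D^b}(x)=\V^{D^b}(x)-\Lambda\E_x[e^{-q\tau^{D^b}}]+\Lambda K$ by \eqref{lagrangian}, and $\V^{D^b}$ is already provided by \eqref{ValuefunctqscaleTrans}, it suffices to compute $\varphi(x):=\E_x[e^{-q\tau^{D^b}}]$ under the single band strategy and then collect the pieces. No new machinery is needed beyond the standard two-sided exit identities for spectrally negative L\'evy processes.

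Fix $x\in[0,b_+]$ and let $\tau_a^+:=\inf\{t\geq0:X_t\geq a\}$ and $\tau_0^-:=\inf\{t\geq0:X_t<0\}$. I would condition on the first time the reserves leave the interval $(0,b_+)$ under $D^b$. Since $X$ is spectrally negative, it creeps upward and cannot overshoot $b_+$, so on the event $\{\tau_{b_+}^+<\tau_0^-\}$ the reserves reach $b_+$, are instantly reset to $b_-$ (which does not affect the ruin time), and the process restarts at $b_-$ in distribution. Using the classical identities
\begin{equation*}
\E_x\left[e^{-q\tau_{b_+}^+}\1_{\{\tau_{b_+}^+<\tau_0^-\}}\right]=\frac{W^{(q)}(x)}{W^{(q)}(b_+)},\qquad \E_x\left[e^{-q\tau_0^-}\1_{\{\tau_0^-<\tau_{b_+}^+\}}\right]=Z^{(q)}(x)-\frac{W^{(q)}(x)}{W^{(q)}(b_+)}Z^{(q)}(b_+),
\end{equation*}
the strong Markov property gives a linear equation for $\varphi(b_-)$ upon setting $x=b_-$, with solution
\begin{equation*}
\varphi(b_-)=\frac{Z^{(q)}(b_-)W^{(q)}(b_+)-W^{(q)}(b_-)Z^{(q)}(b_+)}{W^{(q)}(b_+)-W^{(q)}(b_-)}.
\end{equation*}
Substituting back gives $\varphi(x)$ as a linear combination of $W^{(q)}(x)$, $Z^{(q)}(x)$ and a constant on $[0,b_+]$.

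Combining $\V^{D^b}$ and $\varphi$ in the definition of $\V_\Lambda^{D^b}$, the coefficient of $Z^{(q)}(x)$ is immediately $-\Lambda$ as claimed, while matching the coefficient of $W^{(q)}(x)$ with $G_\Lambda(b_-,b_+)$ requires only the identity $Z^{(q)}(b_+)-Z^{(q)}(b_-)=q\int_{b_-}^{b_+}W^{(q)}(z)dz$. This algebraic simplification is the only nontrivial step, but it is purely mechanical and parallels the computation behind \eqref{lagrangianbarrier} in Proposition \ref{Lagrangianbarrier}. For $x>b_+$, under $D^b$ the firm instantly pays the lump sum $x-b_-$ to reset reserves to $b_-$, producing a single jump of $D$ of size $x-b_-$ at time $0+$ with net cash flow $x-b_--\beta$; ruin then occurs with the law of $\tau^{D^b}$ started from $b_-$. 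Substituting into $\V_\Lambda^{D^b}$ directly yields $\V_\Lambda^{D^b}(x)=x-b_--\beta+\V_\Lambda^{D^b}(b_-)$, which is admissible since $x-b_-\le x=L_0^{D^b}$ given $b_-\geq 0$.
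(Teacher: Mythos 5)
Your derivation is correct. The paper itself gives no proof of this proposition---it simply cites \cite{avram2015}---but your route (write $\V_\Lambda^{D^b}=\V^{D^b}-\Lambda\E_x[e^{-q\tau^{D^b}}]+\Lambda K$, compute $\E_x[e^{-q\tau^{D^b}}]$ via the two-sided exit identities and the strong Markov property at the post-dividend level $b_-$, then use $Z^{(q)}(b_+)-Z^{(q)}(b_-)=q\int_{b_-}^{b_+}W^{(q)}(z)\,dz$ to recognize $G_\Lambda$) is the standard one, and the expression you obtain for $\E_x[e^{-q\tau^{D^b}}]$ agrees exactly with the formula \eqref{PsiTrans} that the paper records later. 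The only phrase worth tightening is that the reset to $b_-$ ``does not affect the ruin time'': what you mean (correctly) is that the lump-sum payment at $b_+$ cannot itself trigger ruin because $b_-\geq 0$, so the strong Markov property applies cleanly at the restart level $b_-$.
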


As expected, there is a close relation between \eqref{functionG} and the function $\zeta_{\Lambda}$ defined by \eqref{functionZ}.

\begin{remark}
If $\beta=0$ and letting $b_{-}\rightarrow b_{+}$ in \eqref{functionG}, we can see that 
\begin{align*}
 \lim_{b_{-}\rightarrow b_{+}}G_{\Lambda}(b_{-},b_{+})&=\lim_{b_{-}\rightarrow b_{+}}\frac{1+\frac{q\Lambda}{b_{+}-b_{-}}\int_{b_{-}}^{b_{+}}W^{(q)}(z)d z }{\frac{W^{(q)}(b_{+})-W^{(q)}(b_{-})}{b_{+}-b_{-}}}\\
&=\frac{1+q\Lambda W^{(q)}(b_{+})}{W^{(q)'}(b_{+})}=\zeta_{\Lambda}(b_{+}).
\end{align*}
\end{remark}

Now, from Proposition \ref{LagrangianbarrierTrans} we note that a candidate for optimal single band strategy would be a maximizer of the function $G_{\Lambda}$. The candidate to optimal levels $b^{\Lambda}=(b_-^{\Lambda},b_+^{\Lambda})$ are defined as follows:
\begin{equation}\label{bLambdaTrans}
\begin{cases}
 b^{\Lambda}_{-}=b^{*}(d^{*}),\\
 b^{\Lambda}_{+}= b^{\Lambda}_{-}+d^{*},
\end{cases}
\end{equation}
where
\begin{equation}
 \begin{cases}
  b^{*}(d):=\sup\{\eta\geq0:G_{\Lambda}(\eta,\eta+d)\geq G_{\Lambda}(\varsigma,\varsigma+d),\forall \varsigma\geq0 \},\ \text{with}\ d>0,\\
  d^{*}:=\sup\{d\geq0:G_{\Lambda}(b^{*}(d),b^{*}(d)+d)\geq G_{\Lambda}(b^{*}(\varsigma),b^{*}(\varsigma)+\varsigma),\forall \varsigma\geq0 \}.
 \end{cases}
\end{equation}
It can be verified that
\begin{equation}
 G_{\Lambda}(b^{\Lambda}_{-},b^{\Lambda}_{+})\geq G_{\Lambda}(b_{-},b_{+}),\ \text{for any}\ (b_{-},b_{+})\ \text{with}\ 0\leq b_{-}<b_{+}, 
\end{equation}
and from \cite{avram2015}, we get the following statement.
\begin{thm}[Optimal strategy for \eqref{P2}]\label{L2}
 Let $b^{\Lambda}=(b^{\Lambda}_{-},b^{\Lambda}_{+})$ be defined as in \eqref{bLambdaTrans}. Then, $b^{\Lambda}_{+}<\infty$ and 
 \begin{equation}\label{p1}
G_{\Lambda}(b^{\Lambda}_{-},b^{\Lambda}_{+})=\zeta_{\Lambda}(b^{\Lambda}_{+}),
\end{equation}
where $\zeta_{\Lambda}$ is given by \eqref{functionZ}. In particular, it is optimal to adopt the strategy $D^{b^{\Lambda}}$.
\end{thm}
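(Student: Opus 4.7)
The statement has three pieces — finiteness of $b^{\Lambda}_+$, the identity \eqref{p1}, and optimality of $D^{b^{\Lambda}}$ for \eqref{P2} — which I would address in that order, adapting the analysis of \cite{avram2015} to the present Lagrangian.

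For the finiteness of $b^{\Lambda}_+$, I would analyse the behaviour of $G_{\Lambda}(b_-, b_+)$ as $b_+\to\infty$. By the complete-monotone representation \eqref{qscalecompmon} and the asymptotic \eqref{limitqfact}, both the numerator and the denominator of \eqref{functionG} are dominated by $e^{\Phi(q)b_+}/\psi'(\Phi(q))$, and $\int_{0}^{b_+} W^{(q)}(z)\,dz \sim W^{(q)}(b_+)/\Phi(q)$, so $G_{\Lambda}(b_-,b_+)\to q\Lambda/\Phi(q)$. The same asymptotic gives $\zeta_{\Lambda}(\varsigma)\to q\Lambda/\Phi(q)$, and the unimodality in Remark \ref{complmonotprop} yields $\zeta_{\Lambda}(b_{\Lambda})>q\Lambda/\Phi(q)$. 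Combined with $\lim_{b_-\uparrow b_+} G_{\Lambda}(b_-,b_+)=\zeta_{\Lambda}(b_+)$, the supremum of $G_{\Lambda}$ strictly exceeds its limit at infinity and is therefore attained at a finite pair.

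The identity \eqref{p1} is the first-order condition in $b_+$ at the interior maximum. Differentiating \eqref{functionG} and setting $\partial G_{\Lambda}/\partial b_+=0$ at $(b^{\Lambda}_-,b^{\Lambda}_+)$ gives
\begin{align*}
\frac{1+q\Lambda W^{(q)}(b^{\Lambda}_+)}{W^{(q)'}(b^{\Lambda}_+)} = \frac{b^{\Lambda}_+-b^{\Lambda}_- - \beta + q\Lambda \int_{b^{\Lambda}_-}^{b^{\Lambda}_+} W^{(q)}(z)\,dz}{W^{(q)}(b^{\Lambda}_+) - W^{(q)}(b^{\Lambda}_-)},
\end{align*}
which is precisely $\zeta_{\Lambda}(b^{\Lambda}_+)=G_{\Lambda}(b^{\Lambda}_-,b^{\Lambda}_+)$. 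When $b^{\Lambda}_->0$, the symmetric condition $\partial G_{\Lambda}/\partial b_-=0$ also yields $\zeta_{\Lambda}(b^{\Lambda}_-)=G_{\Lambda}(b^{\Lambda}_-,b^{\Lambda}_+)$, so the two candidate levels sit on opposite sides of the unique maximiser of $\zeta_{\Lambda}$ at a common value.

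For optimality, I would run a verification argument on the candidate $v(x)$ given by the right-hand side of \eqref{lagrangianbarrierTrans} with $b=b^{\Lambda}$. The steps are (a) $v$ is $C^1$ in the bounded-variation case and $C^2$ otherwise, inherited from the smoothness of $W^{(q)}$ under complete monotonicity; (b) the Bellman inequality $(\calA-q)v+\Lambda q\leq 0$ on the continuation region, where $\calA$ is the generator of $X$; (c) the impulse obstacle $v(x)\geq v(x-z)+z-\beta$ for every $0\leq z\leq x$. Once these hold, It\^o applied to $e^{-qt}v(L^{D}_t)$ for an arbitrary admissible $D$, together with localisation and dominated convergence, yields $\V^{D}_{\Lambda}(x)\leq v(x)$, with equality for $D^{b^{\Lambda}}$. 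The main obstacle lies in (b)--(c): (b) on $(0,b^{\Lambda}_+)$ reduces to the pointwise estimate $\zeta_{\Lambda}(y)\leq\zeta_{\Lambda}(b^{\Lambda}_+)$ combined with \eqref{p1}, while (c) amounts to showing that no chord of $v$ of slope one lies strictly above $v$, a property that follows from the two-dimensional maximality of $(b^{\Lambda}_-,b^{\Lambda}_+)$ for $G_{\Lambda}$ together with the strict log-concavity of $W^{(q)}$ on $(b_0,\infty)$ provided by Corollary \ref{strictlogconcave}.
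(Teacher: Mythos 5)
The paper does not actually prove this theorem: it is quoted from \cite{avram2015} (the displayed maximality of $G_{\Lambda}$ at $(b^{\Lambda}_-,b^{\Lambda}_+)$ is stated with ``it can be verified'' and the optimality is attributed to that reference). Your outline --- first-order conditions for $G_{\Lambda}$ plus a verification lemma --- is the standard route of that reference, so in spirit you are reconstructing the right argument. Your derivation of \eqref{p1} from $\partial G_{\Lambda}/\partial b_+=0$ is correct and matches the gradient formula \eqref{p13} used later in Lemma \ref{bLambdaypair} (with the caveat that when $b^{\Lambda}_+$ maximizes $g_\Lambda(\cdot)=G_\Lambda(b_-^\Lambda,\cdot)$ the same identity still comes out of the one-variable first-order condition, so the boundary case $b_-^\Lambda=0$ is not a problem).

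There is, however, a concrete error in your finiteness step. You invoke $\lim_{b_-\uparrow b_+}G_{\Lambda}(b_-,b_+)=\zeta_{\Lambda}(b_+)$ to conclude that $\sup G_{\Lambda}\geq\sup\zeta_{\Lambda}=\zeta_{\Lambda}(b_{\Lambda})>q\Lambda/\Phi(q)$. That limit identity holds only when $\beta=0$ --- the paper's own remark after Proposition \ref{LagrangianbarrierTrans} is explicit about this --- whereas here $\beta>0$: the numerator of \eqref{functionG} tends to $-\beta<0$ and the denominator to $0^+$, so $G_{\Lambda}(b_-,b_+)\to-\infty$ as $b_-\uparrow b_+$. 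Your comparison therefore proves nothing, and the claim that the supremum strictly exceeds the limit at infinity needs a different justification (for instance, showing via the representation \eqref{qscalecompmon} that $G_{\Lambda}(b_-,b_+)\to q\Lambda/\Phi(q)$ \emph{from above} as $b_+\to\infty$ with $b_-$ fixed, and then using continuity together with the $-\infty$ behaviour near the diagonal to get a maximizer on a bounded set). A second, smaller inaccuracy: in step (b) of your verification sketch, the generator inequality on the continuation region $(0,b^{\Lambda}_+)$ cannot reduce to $\zeta_{\Lambda}(y)\leq\zeta_{\Lambda}(b^{\Lambda}_+)$, since $\zeta_{\Lambda}$ attains its maximum at $b_{\Lambda}\in(b^{\Lambda}_-,b^{\Lambda}_+)$ and so that pointwise bound is false near $b_{\Lambda}$; on the continuation region $(\calA-q)v+\Lambda q$ vanishes identically because $v$ is a combination of $W^{(q)}$ and $Z^{(q)}$ there, and the nontrivial generator inequality is on $(b^{\Lambda}_+,\infty)$, where the monotonicity of $\zeta_{\Lambda}$ beyond $b_{\Lambda}$ is what is actually used.
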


\subsubsection{Solution of \eqref{P1}} 

In this section we take a slightly different approach than in the previous case. In this case we consider the parametric curve given by $\Lambda\mapsto b^{\Lambda}=(b_-^{\Lambda},b_+^{\Lambda})$ for $\Lambda\geq0$. The following lemma gives the relationship between $b_{\Lambda}$ and the optimal pair $(b_{-}^{\Lambda},b_{+}^{\Lambda})$, where $b_{\Lambda}$ is given by \eqref{bLambda}.

\begin{lemma}\label{bLambdaypair}
 Let $(b^{\Lambda}_{-},b^{\Lambda}_{+})$ be defined as in \eqref{bLambdaTrans}, where $\Lambda\geq0$. Then:
 \begin{enumerate}[(1)]
  \item If $b_{\Lambda}>0$, then $0\leq b^{\Lambda}_{-}< b_{\Lambda}<b^{\Lambda}_{+}$.
  \item If $b_{\Lambda}=0$, then $b_{-}^{\Lambda}=b_{\Lambda}<b_{+}^{\Lambda}$.
 \end{enumerate}
\end{lemma}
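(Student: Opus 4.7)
The plan is to combine the identity from Theorem~\ref{L2}, namely $G_\Lambda(b_-^\Lambda,b_+^\Lambda)=\zeta_\Lambda(b_+^\Lambda)$, with the unimodality of $\zeta_\Lambda$ in Remark~\ref{complmonotprop} and a first-order condition in the variable $b_-$. The crucial rewriting is $1+q\Lambda W^{(q)}(z)=\zeta_\Lambda(z)W^{(q)'}(z)$, which recasts \eqref{functionG} as a penalized weighted average of $\zeta_\Lambda$:
$$G_\Lambda(b_-,b_+)=\frac{\int_{b_-}^{b_+}\zeta_\Lambda(z)W^{(q)'}(z)\,dz-\beta}{\int_{b_-}^{b_+}W^{(q)'}(z)\,dz}.$$
Under the complete-monotonicity assumption, $W^{(q)'}>0$ on $(0,\infty)$ (see the discussion after \eqref{qscalecompmon}), so the denominator is strictly positive.

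For Part~(1), I would first establish $b_+^\Lambda>b_\Lambda$ by contradiction. If $b_+^\Lambda\leq b_\Lambda$, then $\zeta_\Lambda$ is strictly increasing on $[b_-^\Lambda,b_+^\Lambda]$, so $\zeta_\Lambda(z)<\zeta_\Lambda(b_+^\Lambda)$ for $z\in[b_-^\Lambda,b_+^\Lambda)$. Integrating against the positive weight $W^{(q)'}$ and accounting for $\beta>0$ yields
$$G_\Lambda(b_-^\Lambda,b_+^\Lambda)<\zeta_\Lambda(b_+^\Lambda)-\frac{\beta}{W^{(q)}(b_+^\Lambda)-W^{(q)}(b_-^\Lambda)}<\zeta_\Lambda(b_+^\Lambda),$$
contradicting \eqref{p1}. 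For $b_-^\Lambda<b_\Lambda$: if $b_-^\Lambda=0$ the inequality is immediate; if $b_-^\Lambda>0$, then a short calculation (analogous to that of $\zeta_\Lambda'$ in the proof of Proposition~\ref{btolambdaprop}) shows that the first-order condition $\partial_{b_-}G_\Lambda(b_-^\Lambda,b_+^\Lambda)=0$ is equivalent to $\zeta_\Lambda(b_-^\Lambda)=G_\Lambda(b_-^\Lambda,b_+^\Lambda)$; combined with \eqref{p1}, this gives $\zeta_\Lambda(b_-^\Lambda)=\zeta_\Lambda(b_+^\Lambda)$. If additionally $b_-^\Lambda\geq b_\Lambda$, both $b_-^\Lambda$ and $b_+^\Lambda$ would lie in the strict-decrease region of $\zeta_\Lambda$, forcing $\zeta_\Lambda(b_-^\Lambda)>\zeta_\Lambda(b_+^\Lambda)$, a contradiction.

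Part~(2) follows from the same first-order argument: if $b_\Lambda=0$, then $\zeta_\Lambda$ is strictly decreasing on $(0,\infty)$, so $b_-^\Lambda>0$ would again give $\zeta_\Lambda(b_-^\Lambda)>\zeta_\Lambda(b_+^\Lambda)$, contradicting the equality derived from the FOC and \eqref{p1}. Hence $b_-^\Lambda=0=b_\Lambda$, and the strict inequality $b_-^\Lambda<b_+^\Lambda$ is built into the definition of a single band strategy. I expect the main obstacle to be the strict inequality $b_+^\Lambda>b_\Lambda$ in Part~(1): one must check that introducing the transaction-cost term $-\beta$ in the numerator of $G_\Lambda$ does not undo the weighted-average comparison with $\zeta_\Lambda(b_+^\Lambda)$, which works out precisely because $\beta>0$ strengthens the inequality in the favorable direction.
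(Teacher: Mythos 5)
Your proof is correct, and its skeleton --- combining the identity \eqref{p1} from Theorem \ref{L2} with the first-order condition in $b_-$ and the unimodality of $\zeta_{\Lambda}$ from Remark \ref{complmonotprop} --- is the same as the paper's. The genuine difference lies in how $b_+^{\Lambda}>b_{\Lambda}$ is established. The paper splits into two cases: when $b_-^{\Lambda}>0$ it uses both components of the gradient \eqref{p13} to get $\zeta_{\Lambda}(b_-^{\Lambda})=G_{\Lambda}(b_-^{\Lambda},b_+^{\Lambda})=\zeta_{\Lambda}(b_+^{\Lambda})$ and reads off the ordering from unimodality; when $b_-^{\Lambda}=0$ it introduces the one-variable function $g_{\Lambda}(\varsigma)=G_{\Lambda}(0,\varsigma)$, computes $g'_{\Lambda}$, and deduces from the sign pattern of $\zeta_{\Lambda}-g_{\Lambda}$ around the maximizer that $b_+^{\Lambda}>b_{\Lambda}$. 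You instead rewrite $G_{\Lambda}(b_-,b_+)$ as the $W^{(q)'}$-weighted average of $\zeta_{\Lambda}$ over $[b_-,b_+]$ minus the positive term $\beta/\bigl(W^{(q)}(b_+)-W^{(q)}(b_-)\bigr)$, so that $b_+^{\Lambda}\leq b_{\Lambda}$ would force $G_{\Lambda}(b_-^{\Lambda},b_+^{\Lambda})<\zeta_{\Lambda}(b_+^{\Lambda})$, contradicting \eqref{p1} uniformly in whether $b_-^{\Lambda}$ is zero or positive. This buys a single argument covering both the boundary and interior cases (and in fact does not even need $\beta>0$: the weighted-average inequality is already strict because $\zeta_{\Lambda}$ is strictly increasing on $(0,b_{\Lambda})$ and $W^{(q)'}>0$), while the paper's route avoids the integral rewriting at the cost of the extra case analysis via $g_{\Lambda}$. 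Your treatment of $b_-^{\Lambda}<b_{\Lambda}$ and of part (2) coincides with the paper's.
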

\begin{proof}
First note that under the assumption of completely monotonicity of the density of $\nu$, we have that $G_{\Lambda}$ is smooth, therefore we can compute its stationary points. So,  if $(b^{\Lambda}_{-},b^{\Lambda}_{+})$ is an interior maximum  point,  i.e. $0<b_{-}^{\Lambda}<b_{+}^{\Lambda}$, we must have that   
\begin{equation}\label{p13}
\nabla G_{\Lambda}(b_-,b_+)=\begin{pmatrix}
    \dfrac{W^{(q)'}(b_{-})}{W^{(q)}(b_{+})-W^{(q)}(b_{-})}\left(G_{\Lambda}(b_-,b_+)-\zeta_{\Lambda}(b_-)\right)\\
    -\dfrac{W^{(q)'}(b_{+})}{W^{(q)}(b_{+})-W^{(q)}(b_{-})}\left(G_{\Lambda}(b_-,b_+)-\zeta_{\Lambda}(b_+)\right)
  \end{pmatrix}=\begin{pmatrix}0\\0\end{pmatrix}
\end{equation}
Suppose now that  $b_{\Lambda}>0$. If $b_-^{\Lambda}$ is strictly positive, by \eqref{p13} it follows that 
\begin{equation}\label{p15}
\zeta_{\Lambda}(b^{\Lambda}_{-})=G_{\Lambda}(b^{\Lambda}_{-},b^{\Lambda}_{+})=\zeta_{\Lambda}(b^{\Lambda}_{+}),
\end{equation}
and by Remark \ref{complmonotprop}, this means that $b^{\Lambda}_{-}< b_{\Lambda}<b^{\Lambda}_{+}$. If $b_{-}^{\Lambda}=0$, define the function $g_{\Lambda}:(0,\infty)\longrightarrow\R$ as $g_{\Lambda}(\varsigma):=G_{\Lambda}(0,\varsigma)$. Then,
\begin{equation*}
 g'_{\Lambda}(\varsigma)=\frac{W^{(q)'}(\varsigma)}{W^{(q)}(\varsigma)-W^{(q)}(0)}[\zeta_{\Lambda}(\varsigma)-g_{\Lambda}(\varsigma)],
\end{equation*}
and since $b_{+}^{\Lambda}$ is a maximizer of $g_{\Lambda}$, it follows that $\zeta_{\Lambda}(b_{+}^{\Lambda})=g_{\Lambda}(b_{+}^{\Lambda})$ and
\begin{equation*}
 \begin{cases}
  \zeta_{\Lambda}(\varsigma)>g_{\Lambda}(\varsigma),& \text{if}\ \varsigma< b_{+}^{\Lambda}\\
  \zeta_{\Lambda}(\varsigma)<g_{\Lambda}(\varsigma),& \text{if}\ \varsigma> b_{+}^{\Lambda},
 \end{cases}
\end{equation*}
which shows that $b_{+}^{\Lambda}>b_{\Lambda}$, again by Remark \ref{complmonotprop}. 
In the case where $b_{\Lambda}=0$, we must have that $b_{-}^{\Lambda}=0$. Otherwise, if $b_{-}^{\Lambda}\neq0$, from \eqref{p13}, we have that $\zeta_{\Lambda}(b_{-}^{\Lambda})=\zeta_{\Lambda}(b_{+}^{\Lambda})$, which is a contradiction since $\zeta_{\Lambda}$ is a strictly decreasing function on $(0,\infty)$.
\end{proof}

\begin{prop}\label{lambdainftylemmaTrans}
The curve $\Lambda\mapsto(b_-^{\Lambda},b_+^{\Lambda})$ for $\Lambda\geq0$ is continuous and unbounded.
\end{prop}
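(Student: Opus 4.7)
The plan is to handle unboundedness first as a direct consequence of prior results, and then establish continuity through a Berge-type argmax convergence argument based on uniqueness of the maximizer of $G_{\Lambda}$.

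\textbf{Unboundedness.} By Corollary \ref{bLambdainfty}, $b_{\Lambda}\to\infty$ as $\Lambda\to\infty$; in particular $b_{\Lambda}>0$ for all sufficiently large $\Lambda$. For such $\Lambda$, Lemma \ref{bLambdaypair}(1) gives $b_{+}^{\Lambda}>b_{\Lambda}$, hence $b_{+}^{\Lambda}\to\infty$ as $\Lambda\to\infty$ and the curve is unbounded.

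\textbf{Continuity.} Fix $\Lambda_{*}\geq 0$ and a sequence $\Lambda_{n}\to\Lambda_{*}$. I would first establish local boundedness, namely that $\{(b_{-}^{\Lambda_{n}},b_{+}^{\Lambda_{n}})\}_{n}$ lies in a compact subset of $\{0\leq b_{-}<b_{+}<\infty\}$. An upper bound on $b_{+}^{\Lambda_{n}}$ comes from the asymptotics $e^{-\Phi(q)x}W^{(q)}(x)\to 1/\psi'(\Phi(q))$ from \eqref{limitqfact}: for $\Lambda$ in a bounded set, taking $b_{+}\to\infty$ forces $G_{\Lambda}(b_{-},b_{+})$ to be dominated by its value on a fixed pair close to the (locally bounded) maximizer $b_{\Lambda}$ of $\zeta_{\Lambda}$. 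A positive lower bound on $b_{+}^{\Lambda_{n}}-b_{-}^{\Lambda_{n}}$ follows from $\beta>0$, since $G_{\Lambda}(b_{-},b_{+})$ is negative when $b_{+}-b_{-}$ is too small. Extracting a convergent subsequence $(b_{-}^{\Lambda_{n_{k}}},b_{+}^{\Lambda_{n_{k}}})\to(b_{-}^{*},b_{+}^{*})$ and passing to the limit in the optimality inequality $G_{\Lambda_{n_{k}}}(b_{-}^{\Lambda_{n_{k}}},b_{+}^{\Lambda_{n_{k}}})\geq G_{\Lambda_{n_{k}}}(b_{-},b_{+})$ using joint continuity of $G_{\Lambda}$ in $(\Lambda,b_{-},b_{+})$ shows that $(b_{-}^{*},b_{+}^{*})$ maximizes $G_{\Lambda_{*}}$. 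Under completely monotone density of $\nu$ the maximizer is unique (as in \cite{avram2015}), so $(b_{-}^{*},b_{+}^{*})=(b_{-}^{\Lambda_{*}},b_{+}^{\Lambda_{*}})$, and the full sequence converges by a standard subsequence argument.

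\textbf{Main obstacle.} The most delicate step is the locally uniform upper bound on $b_{+}^{\Lambda_{n}}$, since global unboundedness of the curve rules out any naive compactness argument. The control hinges on the exponential growth of $W^{(q)}$ at rate $\Phi(q)$, combined with the fact that in any bounded range of $\Lambda$ the auxiliary level $b_{\Lambda}$ remains bounded (via the explicit formula \eqref{btolambdamap} and Proposition \ref{btolambdaprop}), supplying a competing finite-range pair whose $G$-value exceeds the limit of $G_{\Lambda}(b_{-},b_{+})$ as $b_{+}\to\infty$. Uniqueness of the maximizer of $G_{\Lambda_{*}}$ is the other crucial ingredient needed to upgrade subsequential accumulation to genuine convergence.
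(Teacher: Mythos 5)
Your argument for unboundedness is exactly the paper's (Lemma \ref{bLambdaypair} plus Corollary \ref{bLambdainfty}), but for continuity you take a genuinely different route. The paper applies the Implicit Function Theorem to the first-order conditions \eqref{p13}: in the interior regime $b_-^{\Lambda}>0$ the system $G_{\Lambda}-\zeta_{\Lambda}(b_{\pm})=0$ has Jacobian determinant $\zeta'_{\Lambda}(b_+^{\Lambda})\zeta'_{\Lambda}(b_-^{\Lambda})<0$, and in the boundary regime $b_-^{\Lambda}=0$ the single equation $G_{\Lambda}(0,b_+)-\zeta_{\Lambda}(b_+)=0$ has $\partial_{b_+}F=-\zeta'_{\Lambda}(b_+^{\Lambda})>0$; the sign information comes from Lemma \ref{bLambdaypair} via $b_-^{\Lambda}<b_{\Lambda}<b_+^{\Lambda}$. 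That approach is shorter and yields local smoothness of the curve, not merely continuity, but it only tracks a branch of critical points — identifying that branch with the argmax still implicitly requires the uniqueness of the maximizer that you invoke explicitly, and the paper's case split leaves the transition between the two regimes untreated, which your argmax-stability argument handles automatically. The price of your route is the locally uniform bound on $b_+^{\Lambda_n}$, which you correctly flag as the delicate step but leave as a sketch; it does close, and the cleanest competing pair is $(b_-^{\Lambda_*},b_+^{\Lambda_*})$ itself: if $b_+^{\Lambda_{n}}\to\infty$ then, since $W^{(q)}/W^{(q)'}$ is bounded and $\zeta_{\Lambda_*}(\varsigma)\to q\Lambda_*/\Phi(q)$, Theorem \ref{L2} gives $\max G_{\Lambda_{n}}=\zeta_{\Lambda_{n}}(b_+^{\Lambda_{n}})\to q\Lambda_*/\Phi(q)$, contradicting $\max G_{\Lambda_{n}}\geq G_{\Lambda_{n}}(b_-^{\Lambda_*},b_+^{\Lambda_*})\to\zeta_{\Lambda_*}(b_+^{\Lambda_*})>q\Lambda_*/\Phi(q)$, the strict inequality holding because $b_+^{\Lambda_*}<\infty$ and $\zeta_{\Lambda_*}$ is strictly decreasing past $b_{\Lambda_*}$ by Remark \ref{complmonotprop}. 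With that step and the uniqueness of the maximizer (which, as you note, follows from the single-peakedness of $\zeta_{\Lambda}$ together with \eqref{p15}) written out, your proof is complete and arguably more self-contained at the regime boundary than the paper's.
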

\begin{proof}
The previous lemma and Corolary \ref{bLambdainfty} shows that $b_{+}^{\Lambda}\rightarrow\infty$ as $\Lambda\rightarrow\infty$, so the curve is unbounded. The continuity follows from the Implicit Function Theorem by considering two cases. First, suppose $b_-^{\Lambda}=0$, then by Theorem \ref{L2} we can define $b_+^{\Lambda}$ by the equation 
$$F(\Lambda,b_+^{\Lambda}):=G_{\Lambda}(0,b_+^{\Lambda})-\zeta_{\Lambda}(b_+^{\Lambda})=0.$$
Simple calculations show that 
$$\frac{\partial F}{\partial b_+}(\Lambda,b_+^{\Lambda})=\frac{\partial G_{\Lambda}}{\partial b_+}(0,b_+^{\Lambda})-\zeta'_{\Lambda}(b_+^{\Lambda})=-\zeta'_{\Lambda}(b_+^{\Lambda})>0,$$
since $b_+^{\Lambda}>b_{\Lambda}$, so the conditions of the Implicit Function Theorem are satisfied. Now, if $b_-^{\Lambda}>0$, the optimal pair is defined by the equations $F(\Lambda,b_-^{\Lambda},b_+^{\Lambda})=(F_1(\Lambda,b_-^{\Lambda},b_+^{\Lambda}),F_2(\Lambda,b_-^{\Lambda},b_+^{\Lambda}))=(0,0)$, where
\begin{align*}
F_1(\Lambda,b_-^{\Lambda},b_+^{\Lambda}):=G_{\Lambda}(b_-^{\Lambda},b_+^{\Lambda})-\zeta_{\Lambda}(b_-^{\Lambda})&=0,\\
F_2(\Lambda,b_-^{\Lambda},b_+^{\Lambda}):=G_{\Lambda}(b_-^{\Lambda},b_+^{\Lambda})-\zeta_{\Lambda}(b_+^{\Lambda})&=0.
\end{align*}
Again, simple calculations show that the Jacobian determinant of this system of equations is $\zeta'_{\Lambda}(b_+^{\Lambda})\zeta'_{\Lambda}(b_-^{\Lambda})<0$, since $b^{\Lambda}_{-}< b_{\Lambda}<b^{\Lambda}_{+}$, implying the continuity of the curve.
\end{proof}

Next, we proceed to analyze the level curves of the constraint. From Equations \eqref{ValuefunctqscaleTrans} and \eqref{lagrangianbarrierTrans} we observe that for $b=(b_-,b_+)$
\begin{align}\label{PsiTrans}\nonumber
\varPsi_x(b_{-},b_{+}):&=\E_x\left[e^{-q\tau^{D^b}}\right]\\
&=
\begin{cases} 
Z^{(q)}(x)-W^{(q)}(x)\dfrac{q\int_{b_{-}}^{b_{+}}W^{(q)}(z)d z}{W^{(q)}(b_{+})-W^{(q)}(b_{-})}, & \mbox{if } 0\leq x\leq b_{+}\\
\dfrac{Z^{(q)}(b_{-})W^{(q)}(b_{+})-Z^{(q)}(b_{+})W^{(q)}(b_{-})}{W^{(q)}(b_{+})-W^{(q)}(b_{-})},  &   \mbox{if } x>b_{+}.
\end{cases}
\end{align}
\begin{remark}
Note that
\begin{equation*}
\lim_{b_{-}\rightarrow b_{+}}\varPsi_{x}(b_{-},b_{+})=\varPsi_x(b_+),
\end{equation*}
where $\varPsi_x(b)$ is defined in \eqref{Psi}.
\end{remark}

The next few lemmas will describe the properties of the level curves of the function \eqref{PsiTrans}.
\begin{lemma}\label{Psidecr}
Let $x\geq0$ be fixed. 
\begin{enumerate}[(i)]
 \item  If $b_-\geq0$ is fixed, the function $\varPsi_x(b_{-},b_{+})$, given in \eqref{PsiTrans}, is non-increasing  for all $b_{+}>b_{-}$, and
 \begin{equation}\label{KlimitTrans}
\lim_{b_{+}\rightarrow\infty}\varPsi_x(b_{-},b_{+})=\bar{K}_{x},
 \end{equation}
where $\bar{K}_x$ is defined in \eqref{Klimitclass}.
 \item  If $b_{+}>0$ is fixed, $\varPsi_x(b_{-},b_{+})$ is non-increasing  for all $b_{-}\in[0,b_{+})$.
\end{enumerate}
\end{lemma}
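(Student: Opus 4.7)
The plan is to verify both monotonicity statements by computing the partial derivatives of $\varPsi_x$ on each branch of \eqref{PsiTrans} and exhibiting the sign as an integrated version of the log-concavity of $W^{(q)}$ on $(0,\infty)$ (Remark \ref{logconcaveprop}). Log-concavity means $W^{(q)'}/W^{(q)}$ is non-increasing on $(0,\infty)$, so for any $0 \leq a < b$ the two companion inequalities
\begin{align*}
W^{(q)'}(a)\int_a^b W^{(q)}(z)\,dz &\geq W^{(q)}(a)\bigl[W^{(q)}(b)-W^{(q)}(a)\bigr],\\
W^{(q)}(b)\bigl[W^{(q)}(b)-W^{(q)}(a)\bigr] &\geq W^{(q)'}(b)\int_a^b W^{(q)}(z)\,dz
\end{align*}
follow by multiplying $W^{(q)'}(z)/W^{(q)}(z) \leq W^{(q)'}(a)/W^{(q)}(a)$ (respectively $\geq W^{(q)'}(b)/W^{(q)}(b)$) by $W^{(q)}(z)$ and integrating over $[a,b]$.

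For (i), fix $b_- \geq 0$ and set
\begin{equation*}
H(b_-,b_+) := \frac{\int_{b_-}^{b_+} W^{(q)}(z)\,dz}{W^{(q)}(b_+) - W^{(q)}(b_-)},
\end{equation*}
so on the branch $x \leq b_+$, $\varPsi_x = Z^{(q)}(x) - q W^{(q)}(x) H$. The quotient rule shows that $\partial_{b_+} H$ has the sign of $W^{(q)}(b_+)[W^{(q)}(b_+)-W^{(q)}(b_-)] - W^{(q)'}(b_+)\int_{b_-}^{b_+} W^{(q)}(z)\,dz$, which is $\geq 0$ by the second displayed inequality; hence $\partial_{b_+}\varPsi_x \leq 0$. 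On the branch $x > b_+$, direct differentiation together with $(Z^{(q)})' = qW^{(q)}$ and the identity $Z^{(q)}(b_+)-Z^{(q)}(b_-) = q\int_{b_-}^{b_+} W^{(q)}(z)\,dz$ collapses the numerator of $\partial_{b_+}\varPsi_x$ to $q W^{(q)}(b_-)\bigl[W^{(q)'}(b_+)\int_{b_-}^{b_+} W^{(q)}(z)\,dz - W^{(q)}(b_+)\int_{b_-}^{b_+} W^{(q)'}(z)\,dz\bigr]$, which is $\leq 0$ by the same inequality (using $\int_{b_-}^{b_+} W^{(q)'}(z)\,dz = W^{(q)}(b_+)-W^{(q)}(b_-)$). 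Continuity of $\varPsi_x$ at $b_+ = x$ is immediate from the two formulas matching there, so $\varPsi_x$ is non-increasing in $b_+$ on $(b_-,\infty)$. The limit \eqref{KlimitTrans} then follows from \eqref{limitqfact}: by L'H\^opital, $H(b_-,b_+) \to W^{(q)}(b_+)/W^{(q)'}(b_+)$ as $b_+ \to \infty$, and the asymptotics $W^{(q)}(b_+) \sim e^{\Phi(q)b_+}/\psi'(\Phi(q))$, $W^{(q)'}(b_+) \sim \Phi(q)e^{\Phi(q)b_+}/\psi'(\Phi(q))$ give $H \to 1/\Phi(q)$, whence $\varPsi_x(b_-,b_+) \to Z^{(q)}(x) - qW^{(q)}(x)/\Phi(q) = \bar{K}_x$.

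Part (ii) is handled symmetrically by pivoting at $b_-$ instead of $b_+$. Fixing $b_+>0$ and differentiating with respect to $b_-$, on the branch $x \leq b_+$ one finds that $\partial_{b_-} H$ has the sign of $W^{(q)'}(b_-)\int_{b_-}^{b_+} W^{(q)}(z)\,dz - W^{(q)}(b_-)[W^{(q)}(b_+)-W^{(q)}(b_-)] \geq 0$ by the \emph{first} of the two inequalities above; hence $\partial_{b_-}\varPsi_x \leq 0$. On the branch $x > b_+$ the same algebraic collapse used before produces a numerator proportional to $W^{(q)}(b_-)\int_{b_-}^{b_+} W^{(q)'}(z)\,dz - W^{(q)'}(b_-)\int_{b_-}^{b_+} W^{(q)}(z)\,dz \leq 0$, again by the first inequality.

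The main obstacle I expect is the algebraic simplification of the derivative in the $x > b_+$ branch: the ratio involves both $Z^{(q)}$ and $W^{(q)}$ at the two endpoints, and it is not transparent at first sight that all the $Z^{(q)}$ terms cancel. The key is the systematic use of $Z^{(q)}(b_+) - Z^{(q)}(b_-) = q\int_{b_-}^{b_+} W^{(q)}(z)\,dz$, after which the derivative factors through exactly the log-concavity expression governing the first branch, so that both parts of the lemma follow uniformly from the single pair of integrated inequalities established at the outset.
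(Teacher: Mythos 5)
Your proof is correct and follows essentially the same route as the paper's: both arguments reduce the monotonicity in each variable to the log-concavity of $W^{(q)}$ integrated over $[b_-,b_+]$, and obtain the limit \eqref{KlimitTrans} via L'H\^opital together with the exponential asymptotics of $W^{(q)}$. The only difference is one of completeness — the paper writes out the computation only for the $b_+$-derivative on the branch $x\leq b_+$ and dismisses the remaining cases as ``similar,'' whereas you carry out all four explicitly, including the cancellation of the $Z^{(q)}$ terms on the $x>b_+$ branch via $Z^{(q)}(b_+)-Z^{(q)}(b_-)=q\int_{b_-}^{b_+}W^{(q)}(z)\,dz$.
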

\begin{proof}
First, assume that $x\leq b_+$. To show that $\varPsi_x(b_{-},b_{+})$ is non-increasing, it is sufficient to verify that 
\begin{equation}\label{p5.0}
\frac{\int_{b_{-}}^{b_{+}}W^{(q)}(z) d z}{W^{(q)}(b_{+})-W^{(q)}(b_{-})},
\end{equation}
is non-decreasing, which is true if 
\begin{align}\nonumber
\frac{\partial}{\partial b_{+}}\biggr[&\frac{\int_{b_{-}}^{b_{+}}W^{(q)}(z)d z}{W^{(q)}(b_{+})-W^{(q)}(b_{-})}\biggl]\\\label{p5}
&=\frac{W^{(q)}(b_{+})}{W^{(q)}(b_{+})-W^{(q)}(b_{-})}-\frac{W^{(q)'}(b_{+})\int_{b_{-}}^{b_{+}}W^{(q)}(z) d z}{[W^{(q)}(b_{+})-W^{(q)}(b_{-})]^{2}}\geq0.
\end{align}
Since $W^{(q)}$ is a log-concave function on $[0,\infty)$, we have that 
\begin{equation}\label{p4}
\frac{W^{(q)'}(\eta)}{W^{(q)}(\eta)}\geq\frac{W^{(q)'}(\varsigma)}{W^{(q)}(\varsigma)},\ \text{for any}\ \eta \ \text{and}\ \varsigma\ \text{with}\ \eta\leq\varsigma. 
\end{equation}
Taking $\varsigma=b_{+}$ in the above inequality, it follows that 
\begin{equation*}
W^{(q)'}(\eta)\geq\frac{W^{(q)'}(b_{+})}{W^{(q)}(b_{+})}W^{(q)}(\eta),\ \text{for any}\ \eta\in[b_{-},b_{+}]. 
\end{equation*}
Then, integrating between $b_{-}$ and $b_{+}$, it yields \eqref{p5} and hence \eqref{p5.0} is non-decreasing. For the case $x>b_+$, if $b_-=0$ and $W^{(q)}(0)=0$ we obtain the constant 1. Otherwise, similar calculations as above show that the function is non-increasing. Proceeding in a similar way that before, we also obtain (ii). Now, by \eqref{Klimitclass} and L'H\^opital's rule, it is easy to see \eqref{KlimitTrans} for any of $b_-$.
\end{proof}
\begin{remark}
Note that if $b_+>b_0$ we obtain strictly decreasing functions in the above lemma by Corollary \ref{strictlogconcave}.
\end{remark}

\begin{lemma}\label{Kcurve}
Let $x\geq0$. Then, for each $K\in(\bar{K}_{x},\varPsi_x(0))$ there exist $\underline{b}$ and $\bar{b}$ such that the level curve $L_K(\varPsi_x)=\{(b_-,b_+):\varPsi_x(b_-,b_+)=K\}$ is continuous and contained in $[0,\underline{b}]\times[\underline{b},\bar{b}]$.
\end{lemma}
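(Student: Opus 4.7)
The plan is to exhibit $L_K(\varPsi_x)$ as the graph of a continuous strictly decreasing function $b_+=\varphi(b_-)$ on a compact interval $[0,\underline{b}]$, with endpoints $\varphi(0)=\bar{b}$ and $\varphi(\underline{b})=\underline{b}$. First I would pin down the two scalars using the one-parameter function $\varPsi_x$ of \eqref{Psi}, which is continuous, non-increasing on $[0,\infty)$, strictly decreasing on $(b_0,\infty)$ (see the proof of Proposition \ref{optimalpair}), and tends to $\bar{K}_x$ at infinity. Since $K\in(\bar{K}_x,\varPsi_x(0))$, the intermediate value theorem produces $\underline{b}\in[b_0,\infty)$ with $\varPsi_x(\underline{b})=K$ (chosen smallest in case of ambiguity on $[0,b_0]$). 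Likewise Lemma \ref{Psidecr}(i) at $b_-=0$ produces $\bar{b}$ with $\varPsi_x(0,\bar{b})=K$. Sending $b_-\to b_+$ in Lemma \ref{Psidecr}(ii) gives $\varPsi_x(0,b_+)\geq\varPsi_x(b_+)$, hence $\varPsi_x(\bar{b})\leq K=\varPsi_x(\underline{b})$ and thus $\bar{b}\geq\underline{b}$.

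For each $b_-\in[0,\underline{b}]$ I would define $\varphi(b_-)$ by applying IVT to $b_+\mapsto\varPsi_x(b_-,b_+)$, which is continuous and non-increasing on $(b_-,\infty)$ by Lemma \ref{Psidecr}(i), has left-limit $\varPsi_x(b_-)\geq\varPsi_x(\underline{b})=K$ as $b_+\searrow b_-$, and right-limit $\bar{K}_x<K$ at infinity. By the remark following Lemma \ref{Psidecr}, this map is strictly decreasing on $[\underline{b},\infty)\subset[b_0,\infty)$, so there is a unique $\varphi(b_-)\in[\underline{b},\bar{b}]$ with $\varPsi_x(b_-,\varphi(b_-))=K$. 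Strict monotonicity of $\varphi$ follows from the strict form of Lemma \ref{Psidecr}(ii): if $b_-<b_-'$, then $\varPsi_x(b_-,\varphi(b_-'))>\varPsi_x(b_-',\varphi(b_-'))=K=\varPsi_x(b_-,\varphi(b_-))$, forcing $\varphi(b_-)>\varphi(b_-')$.

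Conversely, any $(b_-,b_+)\in L_K$ satisfies $b_-\leq\underline{b}$ (else $\varPsi_x(b_-,b_+)\leq\varPsi_x(b_-)<K$ by strict monotonicity of $\varPsi_x$ past $b_0$) and $b_+\geq\underline{b}$ (else $\varPsi_x(b_-,b_+)\geq\varPsi_x(b_+)>K$ by the minimal choice of $\underline{b}$). This yields the inclusion $L_K\subset[0,\underline{b}]\times[\underline{b},\bar{b}]$, which combined with the previous paragraph shows that $L_K$ coincides with the graph of $\varphi$. Continuity of $\varphi$, and hence of the level curve, finally follows from the Implicit Function Theorem applied to $F(b_-,b_+):=\varPsi_x(b_-,b_+)-K$ on the open strip $\{b_+>b_0\}$ that contains $L_K$: there $\partial_{b_+}F<0$ by the strict version of Lemma \ref{Psidecr}(i), so $\varphi$ is $C^1$ on $[0,\underline{b})$, with continuity at $b_-=\underline{b}$ obtained by a sandwiching argument using the monotonicity just established.

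The main obstacle I anticipate is the careful handling of the possibly non-strict monotonicity of $\varPsi_x$ on $[0,b_0]$: one must rule out pathological extensions of $L_K$ into $\{b_+<b_0\}$, which is done by choosing $\underline{b}$ as the smallest root of $\varPsi_x(\cdot)=K$ so that strict inequality $\varPsi_x(b_+)>K$ holds for every $b_+<\underline{b}$, thereby pushing the whole analysis into the strictly-monotone regime $\{b_+\geq\underline{b}\}$ and legitimizing every strict comparison used above.
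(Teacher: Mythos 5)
Your construction follows the paper's own route: the same choice of $\underline{b}$ as the minimal root of $\varPsi_x(\cdot)=K$ (via Proposition \ref{optimalpair}), the same $\bar{b}$ from $\varPsi_x(0,\bar{b})=K$ (via Lemma \ref{Psidecr}), and the same use of the monotonicity of $\varPsi_x(\cdot,\cdot)$ in each variable to trap the level set. The paper's proof is a three-line sketch; you supply the missing details, and your continuity argument via the Implicit Function Theorem is in fact more honest than the paper's bare assertion that continuity of the level curve is ``an immediate consequence'' of continuity of $\varPsi_x(\cdot,\cdot)$.

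There is, however, one concrete soft spot. You assert that the IVT produces $\underline{b}\in[b_0,\infty)$ and later use the inclusion $[\underline{b},\infty)\subset[b_0,\infty)$ to invoke strict monotonicity. This is unjustified: when $K\in[\varPsi_x(b_0),\varPsi_x(0))$ the smallest root $\underline{b}$ lies in $(0,b_0)$, where $\varPsi_x$ is only known to be non-increasing. Two of your strict comparisons then break: the uniqueness of $\varphi(b_-)$ on $[\underline{b},\infty)$, and the step ``$b_->\underline{b}\Rightarrow\varPsi_x(b_-)<K$'' used for the inclusion $b_-\leq\underline{b}$. This is not cosmetic: if $\varPsi_x(\cdot)$ were constant equal to $K$ on some interval $[\underline{b},c]\subset[0,b_0]$ — equivalently, if $W^{(q)}$ were a pure exponential there — then a direct computation with \eqref{PsiTrans} shows every pair $\underline{b}<b_-<b_+\leq c$ lies on $L_K(\varPsi_x)$, violating the claimed containment. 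The repair is to observe that under the standing complete-monotonicity assumption this cannot happen: by \eqref{qscalecompmon}, $W^{(q)}(x)=e^{\Phi(q)x}/\psi'(\Phi(q))-f(x)$ with $f$ completely monotone, hence real-analytic on $(0,\infty)$, so $W^{(q)}$ agreeing with $Ae^{\lambda x}$ on an interval would force $f$ to be a difference of exponentials globally, which is incompatible with $f$ and its derivatives vanishing at infinity unless $f\equiv0$ (excluded since $X$ does not have monotone paths). Consequently the level set of the non-increasing function $b\mapsto\varPsi_x(b)$ at height $K$ is a single point, $\varPsi_x(b)<K$ for all $b>\underline{b}$, and all your strict comparisons go through. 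With that addition the argument is complete; without it, the case $\underline{b}<b_0$ is genuinely open in your write-up (and, to be fair, is glossed over in the paper as well).
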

\begin{proof}
The continuity of the level curve is an immediate consequence of the continuity of $\varPsi_x(\cdot,\cdot)$. First, observe that by Proposition \ref{optimalpair} we know the existence of $\underline{b}\geq0$ such that $\varPsi_x(\underline{b})=K$ (if it is not unique, we take the minimum of them).  On the other hand, by Lemma \ref{Psidecr} there exists $\bar{b}\in[\underline{b},\infty)$ such that $\varPsi_x(0,\bar{b})=K$ (again, if it is not unique, we take the maximum of them). Now, the fact that the curve is contained in $[0,\underline{b}]\times[\underline{b},\bar{b}]$ is again consequence of Lemma \ref{Psidecr}.
\end{proof}

We now prove the result analogous to Propositions \ref{optimalpair}.

\begin{prop}\label{optLambda}
Let $x\geq0$. Then, for each $K>\bar{K}_x$ there exists $\Lambda^*\geq0$ such that:
\begin{enumerate}[(i)]
\item $\varPsi_x(b_-^{\Lambda^*},b_+^{\Lambda^*})=\mathbb{E}_{x}\left[e^{-q \tau^{D^{b_{\Lambda^*}}}}\right]\leq K$ and
\item\ $\Lambda^*\left(K-\mathbb{E}_{x}\left[e^{-q \tau^{D^{b_{\Lambda^*}}}}\right]\right)=0$.
\end{enumerate}
\end{prop}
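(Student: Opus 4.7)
The plan is to run an intermediate value argument on the composition $\phi(\Lambda):=\varPsi_x(b_-^{\Lambda},b_+^{\Lambda})$, where $\Lambda\mapsto(b_-^{\Lambda},b_+^{\Lambda})$ is the parametric curve of optimal single band levels furnished by Proposition \ref{lambdainftylemmaTrans}. If the unconstrained optimum is already feasible, i.e.\ $\phi(0)\leq K$, then the choice $\Lambda^{*}=0$ works: (i) holds by hypothesis and (ii) is trivial. Otherwise $\phi(0)>K$, and the goal is to locate $\Lambda^{*}>0$ with $\phi(\Lambda^{*})=K$, which makes (i) hold with equality and forces the complementary slackness condition (ii).

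For the IVT the two ingredients needed are continuity of $\phi$ on $[0,\infty)$ and its limiting behaviour at infinity. Continuity is immediate: the explicit formula \eqref{PsiTrans} shows $\varPsi_{x}$ is continuous on $\{(b_{-},b_{+}):0\leq b_{-}<b_{+}\}$, and the curve $\Lambda\mapsto(b_-^{\Lambda},b_+^{\Lambda})$ is continuous by Proposition \ref{lambdainftylemmaTrans}. As $\Lambda\to\infty$, Lemma \ref{bLambdaypair} combined with Corollary \ref{bLambdainfty} gives $b_+^{\Lambda}>b_{\Lambda}\to\infty$.

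To identify the limit $\lim_{\Lambda\to\infty}\phi(\Lambda)=\bar{K}_{x}$ without first controlling the individual behaviour of $b_-^{\Lambda}$, I would squeeze $\phi(\Lambda)$ using the monotonicity of $\varPsi_{x}(\cdot,b_{+})$ on $[0,b_{+})$ from Lemma \ref{Psidecr}(ii):
$$\lim_{b_{-}\uparrow b_+^{\Lambda}}\varPsi_{x}(b_{-},b_+^{\Lambda})\;\leq\;\varPsi_{x}(b_-^{\Lambda},b_+^{\Lambda})\;\leq\;\varPsi_{x}(0,b_+^{\Lambda}).$$
The right endpoint tends to $\bar{K}_{x}$ as $b_+^{\Lambda}\to\infty$ by Lemma \ref{Psidecr}(i). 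The left endpoint, by the remark right after \eqref{PsiTrans}, equals $\varPsi_{x}(b_+^{\Lambda})$ defined in \eqref{Psi}, and this also converges to $\bar{K}_{x}$ by \eqref{Klimitclass}. Sandwiching gives $\phi(\Lambda)\to\bar{K}_{x}<K$ under the standing hypothesis $K>\bar{K}_{x}$, so continuity together with $\phi(0)>K>\bar{K}_{x}$ produces $\Lambda^{*}\in(0,\infty)$ with $\phi(\Lambda^{*})=K$, which delivers both (i) and (ii).

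The main technical obstacle I expect is precisely the passage to the limit of $\phi$, since both coordinates of $\varPsi_{x}$ move with $\Lambda$ and Lemma \ref{Psidecr}(i) is stated only for a fixed lower level. The sandwich above sidesteps any need to analyse $b_-^{\Lambda}$ directly, which is convenient since $b_-^{\Lambda}$ is not known to be monotone and need not diverge; once both bounding quantities are recognised as depending only on the divergent coordinate $b_+^{\Lambda}$, the limit reduces to facts already established for the single argument functions $\varPsi_{x}(0,\cdot)$ and $\varPsi_{x}(\cdot)$.
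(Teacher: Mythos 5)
Your proof is correct, and it takes a genuinely different route from the paper's. For the nontrivial case $K<\varPsi_x(b_-^{0},b_+^{0})$ the paper argues in the plane: it invokes Lemma \ref{Kcurve} to confine the level curve $L_K(\varPsi_x)$ to a compact box $[0,\underline{b}]\times[\underline{b},\bar{b}]$ and then asserts that the continuous, unbounded parametric curve $\Lambda\mapsto(b_-^{\Lambda},b_+^{\Lambda})$ must cross it. You instead compose the two maps and run a one-dimensional intermediate value argument on $\phi(\Lambda)=\varPsi_x(b_-^{\Lambda},b_+^{\Lambda})$, identifying $\lim_{\Lambda\to\infty}\phi(\Lambda)=\bar{K}_x$ by sandwiching $\phi(\Lambda)$ between $\varPsi_x(0,b_+^{\Lambda})$ and $\lim_{b_-\uparrow b_+^{\Lambda}}\varPsi_x(b_-,b_+^{\Lambda})=\varPsi_x(b_+^{\Lambda})$, both of which tend to $\bar{K}_x$ because $b_+^{\Lambda}\to\infty$. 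The sandwich is legitimate by Lemma \ref{Psidecr}(ii), the two limits are exactly Lemma \ref{Psidecr}(i), the remark after \eqref{PsiTrans} and \eqref{Klimitclass}, and for large $\Lambda$ one is automatically in the branch $x\leq b_+^{\Lambda}$ of \eqref{PsiTrans}, so every ingredient you cite applies. What your route buys is twofold: it dispenses with Lemma \ref{Kcurve} altogether, and it replaces the paper's informal ``the two curves must intersect'' step (which, as stated, would need a separation-type justification) with a precise scalar IVT; the cost is having to compute the limit of $\phi$ explicitly, which your sandwich handles cleanly without any control on $b_-^{\Lambda}$.
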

\begin{proof}
If $K\geq\varPsi_x(b_-^0,b_+^0)$, the the unconstrained problem satisfies the restriction and $\Lambda^*=0$ satisfies the conditions. Otherwise, by Proposition \ref{lambdainftylemmaTrans} and Lemma \ref{Kcurve} we deduce that the parametric curve $\Lambda\mapsto b_{\Lambda}=(b_-^{\Lambda},b_+^{\Lambda})$ and the level curve $L_K(\varPsi_x)$ must intersect, that is, there exist $\Lambda^*$ such that $\varPsi_x(b_-^{\Lambda^*},b_+^{\Lambda^*})=K$, so satisfies the conditions.
\end{proof}

By similar arguments as in the previous case we can show the absence of duality gap also in this case.

\section{Solution of the constrained Dual Model}\label{SecDual}

Let us consider the Dual model where the reserve process $X$ is a spectrally positive L\'evy process. In this model we will only study the case without transaction cost. The other case should be a straightforward applications of the ideas presented in this article. 

In order to consider the barrier strategy at level $b$, we need to construct the reflected process at its supremum with initial value $b$, as before. To do so, we note that
\begin{equation}\label{posnegrel}
\hat{X}^{b}=(b\vee \overline{X})-X=Y-(0\wedge\underline{Y}),
\end{equation}
where $Y=b-X$ and $\underline{Y}_t:=\underset{0\leq s\leq t}{\inf} Y_s$. Note that $Y$ is now a spectrally negative L\'evy processes, hence the useful identities in this case concern the  \emph{reflected process at its infimum}.  Therefore when we refer to the Dual model, $q$-scale functions and other quantities correspond to the process $-X$. 

We now present the equivalent version of Proposition \ref{Valuefunctqscale} for the Dual model. The proof of the following proposition is available in \cite{AvPaPi07,KyYa14}. 

\begin{prop}
Let $b>0$ and consider the dividend process $D_t^{b}=X_t -(b- \hat{X}^b_t)$ and $X$ a spectrally positive L\'evy process. For $x\in [0,b]$,
\begin{equation}\label{ValuefunctqscaleDual}
\V^{D^b}(x)=\E_x\left[\int_0^{\tau^{D^b}-} e^{-qt}dD_t^b\right]=-k(b-x)+\frac{Z^{(q)}(b-x)}{Z^{(q)}(b)}k(b),
\end{equation}
where $k(\varsigma):=\bar{Z}^{(q)}(\varsigma)-\dfrac{1}{\Phi(q)}Z^{(q)}(\varsigma)+\dfrac{\psi'(0+)}{q},\, \varsigma\geq 0$.
\end{prop}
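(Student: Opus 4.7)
The plan is to reduce the spectrally positive setting to the spectrally negative one via the change of variable $Z_t := b - X_t$, so that $Z$ is spectrally negative with $Z_0 = b-x$ under $\P_x$. A direct computation from \eqref{posnegrel} gives $\hat X^b_t = Z_t - (0 \wedge \underline Z_t)$, hence $L^{D^b}_t = b - R_t$ where $R_t := Z_t - (0 \wedge \underline Z_t)$ is the Skorokhod reflection of $Z$ at the lower barrier $0$. In these coordinates the dividend process reads $D^b_t = -(0 \wedge \underline Z_t)$, which is precisely the pushing (local-time) term $L^0_t$ of $R$ at $0$, and the ruin time becomes $\sigma^+_b := \inf\{t > 0 : R_t > b\}$. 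With $z := b - x \in [0,b]$, the task therefore reduces to computing the expected discounted cumulative pushing at $0$ of the reflected spectrally negative process $R$, up to its first exit above $b$.

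To evaluate this I would apply It\^o's formula to $e^{-qt} k(R_t)$ for a smooth candidate $k$ on $[0,\infty)$ chosen so that $(\mathcal L - q) k = 0$ on $(0,\infty)$, together with a boundary normalization at $0$ that converts the Skorokhod pushing term into the clean differential $e^{-qt}\,dL^0_t$. Optional stopping at $\sigma^+_b$, combined with the standard first-passage identity $\E_z[e^{-q \sigma^+_b}] = Z^{(q)}(z)/Z^{(q)}(b)$ and with $R_{\sigma^+_b} = b$ (inherited from the absence of upward jumps in $Z$), then yields
\begin{equation*}
\V^{D^b}(x) \;=\; -k(z) \,+\, \frac{Z^{(q)}(z)}{Z^{(q)}(b)}\, k(b).
\end{equation*}
It remains to identify $k$. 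A natural ansatz is $k(\varsigma) = \bar Z^{(q)}(\varsigma) + c_1 Z^{(q)}(\varsigma) + c_2$: the first term absorbs the linear contribution coming from the drift via $\bar Z^{(q)'} = Z^{(q)}$, while the remaining degrees of freedom $c_1, c_2$ are pinned down by the boundary normalization at $0$ and by matching the correct exponential growth rate $\Phi(q)$ at infinity (via \eqref{limitqfact}), which forces $c_1 = -1/\Phi(q)$ and $c_2 = \psi'(0+)/q$, recovering the $k$ of the statement.

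The main technical obstacle is justifying that the local martingale arising from the It\^o decomposition is a true martingale on $[0,\sigma^+_b]$ and that the Skorokhod pushing contributes exactly the expected boundary piece, which requires care especially in the bounded-variation case where $W^{(q)}(0) > 0$ alters the boundary behaviour of $Z^{(q)}$ at $0$. An equivalent, cleaner route that sidesteps this subtlety is to use the It\^o excursion theory of $R$ away from $0$ together with the compensation formula to compute $\E_z\bigl[\int_0^{\sigma^+_b} e^{-qt}\,dL^0_t\bigr]$ directly in terms of excursion escape probabilities; this is essentially the approach followed in \cite{AvPaPi07,KyYa14}, whose computations can be invoked to close the argument.
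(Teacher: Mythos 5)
Your proposal is correct and follows essentially the same route as the paper: the paper sets up exactly your reduction $Y=b-X$ in \eqref{posnegrel}, identifies the dividend process with the regulator of the reflection at the infimum, and then simply cites \cite{AvPaPi07,KyYa14} for the resulting fluctuation identity, which is also where your sketch ultimately lands. One small correction to your heuristic for pinning down $k$: the term $-\tfrac{1}{\Phi(q)}Z^{(q)}$ cancels identically in $-k(b-x)+\tfrac{Z^{(q)}(b-x)}{Z^{(q)}(b)}k(b)$, so no growth condition ``forces'' $c_1=-1/\Phi(q)$ (that normalization only matters for the later smooth-fit analysis), while the constant $c_2=\psi'(0+)/q$ is determined by the harmonicity requirement $(\mathcal{L}-q)\bigl(\bar{Z}^{(q)}+\psi'(0+)/q\bigr)=0$ on $(0,\infty)$ rather than by the boundary normalization at $0$.
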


\subsection{Solution of \eqref{P2}}

We also need the equivalent result for the function $\V_\Lambda^{D^b}$.

\begin{prop}
The function $\V_\Lambda^{D^b}$, where $D^b$ is the barrier strategy at level $b \geq 0$, for $x\geq0$ is given by
\begin{align}\label{lagrangianbarrierDual}
\V_\Lambda^{D^b}(x)=
\begin{cases}
-k(b-x)+\frac{Z^{(q)}(b-x)}{Z^{(q)}(b)}\left[k(b)-\Lambda\right] + \Lambda K, &\text{if}\quad x\leq b\\
x-b+\mathcal{V}_\Lambda^{D^b}(b), &\text{if}\quad x>b. 
\end{cases}
\end{align}
\end{prop}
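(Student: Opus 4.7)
The plan is to start from the defining identity
$$
\V_\Lambda^{D^b}(x)=\V^{D^b}(x)-\Lambda\,\E_x\big[e^{-q\tau^{D^b}}\big]+\Lambda K,
$$
take $\V^{D^b}(x)$ from the closed-form expression \eqref{ValuefunctqscaleDual}, and reduce everything to computing the Laplace transform of the ruin time under a barrier strategy. Once I establish
$$
\E_x\big[e^{-q\tau^{D^b}}\big]=\frac{Z^{(q)}(b-x)}{Z^{(q)}(b)},\qquad x\in[0,b],
$$
the first case of \eqref{lagrangianbarrierDual} follows by grouping the $\Lambda$-terms: the contribution $-\Lambda\,Z^{(q)}(b-x)/Z^{(q)}(b)$ combines with the $Z^{(q)}(b-x)/Z^{(q)}(b)\,k(b)$ already present in $\V^{D^b}(x)$ to give the prefactor $k(b)-\Lambda$ in front of $Z^{(q)}(b-x)/Z^{(q)}(b)$.

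For the Laplace transform itself, I would invoke the pathwise identity \eqref{posnegrel}. Setting $Y:=b-X$, the process $Y$ is spectrally negative with $Y_0=b-x$ under $\P_x$, and $\hat X^{b}=Y-(0\wedge\underline Y)$ is the reflection of $Y$ at its running infimum (thresholded at $0$). Since $L_t^{D^b}=b-\hat X^b_t$, the ruin event $\{L_t^{D^b}<0\}$ coincides with $\{\hat X^b_t>b\}$, so that $\tau^{D^b}$ is exactly the first passage time of the reflected spectrally negative process $Y-(0\wedge\underline Y)$ strictly above the level $b$. The Laplace transform of this stopping time is a classical fluctuation identity: for a spectrally negative process $Y$ starting at $y\in[0,a]$, the first time at which $Y-(0\wedge\underline Y)$ exceeds $a$ has Laplace transform $Z^{(q)}(y)/Z^{(q)}(a)$ (see \cite{AvPaPi07,KyYa14} and the references therein). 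Specializing to $y=b-x$, $a=b$ delivers the required expression.

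The regime $x>b$ is handled separately and is essentially trivial: under the barrier strategy an instantaneous lump-sum payment of size $x-b$ drops the reserves to $b$, so $\V^{D^b}(x)=(x-b)+\V^{D^b}(b)$ and $\E_x[e^{-q\tau^{D^b}}]=\E_b[e^{-q\tau^{D^b}}]$. Plugging these into the defining identity and collecting the common $\Lambda K$ term yields $\V_\Lambda^{D^b}(x)=(x-b)+\V_\Lambda^{D^b}(b)$, which is the second line. The only nontrivial ingredient is the $Z^{(q)}$-ratio formula for the reflected process, whose standard proofs proceed either via Pistorius' excursion theoretic argument or via a martingale verification exploiting the fact that $e^{-qt}Z^{(q)}(\hat X^b_t)$ is a local martingale; once it is invoked, the remainder of the argument is purely algebraic bookkeeping.
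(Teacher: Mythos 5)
Your proposal is correct and follows essentially the same route as the paper: both reduce the statement to the identity $\E_x[e^{-q\tau^{D^b}}]=Z^{(q)}(b-x)/Z^{(q)}(b)$, obtained by rewriting $\tau^{D^b}$ via \eqref{posnegrel} as the exit time of the spectrally negative process $Y=b-X$ reflected at its infimum and invoking the fluctuation identity from \cite{AvPaPi07}, after which the formula is algebraic bookkeeping with \eqref{ValuefunctqscaleDual} and \eqref{lagrangian}. Your explicit treatment of the $x>b$ case is a harmless addition that the paper leaves implicit.
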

\begin{proof}
First note that $Z^{(q)}(z)=1$ and $\bar{Z}^{(q)}(z)=z$ for $z<0$. Now, from \cite{AvPaPi07} we have that if $Y$ is a spectrally negative L\'evy process and $\tilde{Y}=Y-(0\wedge\underline{Y})$, the reflected process at its past infimum below 0, then $\E_{y}\left[e^{-q \tau_b}\right]=\frac{Z^{(q)}(y)}{Z^{(q)}(b)}$, where $\tau_b$ is the first hitting time of $\tilde{Y}$ at $\{b\}$. Therefore, for $X$ a spectrally positive L\'evy process, by \eqref{posnegrel}, one gets
\begin{equation}\label{eqrestdual}
\E_{x}\left[e^{-q \tau^{D^b}}\right]=\frac{Z^{(q)}(b-x)}{Z^{(q)}(b)}.
\end{equation}
Combining this with the previous proposition yields the result.
\end{proof}
The last result is also included in \cite{Yin}. Now, to solve \eqref{P2} in the set up of the Dual model we will follow \cite{KyYa14} closely. Again, the idea is to propose a candidate for optimal barrier and run it through a verification lemma. In contrast to the approach taken in subsection \ref{dualclassical}, the candidate barrier will be such that corresponding value function is $C^1$ [resp. $C^2$] in the case of bounded [resp. unbounded] variation. This approach is commonly referred as \emph{smooth fit}. In this section no assumption about the L\'evy measure $\nu$ is made.

From Equation \eqref{lagrangianbarrierDual} we get,
\begin{align*}
(\V_\Lambda^{D^b})'(x)=Z^{(q)}(b-x)-q W^{(q)}(b-x)\xi_\Lambda(b),
\end{align*}
and
\begin{align*}
(\V_\Lambda^{D^b})''(x)=-q W^{(q)}(b-x)+qW^{(q)'}(b-x)\xi_\Lambda(b),
\end{align*}
where
\begin{align}
\xi_\Lambda(\varsigma):=\frac{1}{\Phi(q)}+\frac{k(\varsigma)-\Lambda}{Z^{(q)}(\varsigma)}.
\end{align}

It is easy to check that the smooth fit condition is equivalent to $\xi_\Lambda(b)=0$ in both the bounded or unbounded variation case. This is equivalent to $b$ satisfying the relation $\bar{Z}^{(q)}(b)=\Lambda-\frac{\psi'(0+)}q$. Finally, since $\bar{Z}^{(q)}$ is strictly increasing and $\bar{Z}^{(q)}(0)=0$ the candidate to optimal barrier is given by
\begin{align}\label{optbarrierdual}
b_\Lambda:= \begin{cases}
(\bar{Z}^{(q)})^{-1}\left(\Lambda-\frac{\psi'(0+)}{q}\right)& \mbox{if } \frac{\psi'(0+)}{q}<\Lambda\\
0 & \mbox{otherwhise.}
\end{cases}
\end{align}
This level is indeed optimal. To see that, we can use the standard verification lemma approach as in Proposition 5 in \cite{AvPaPi07} and Theorem 2.1 in \cite{KyYa14}. This is also shown in \cite{Yin}.

\begin{thm}[Optimal strategy for \eqref{P2}]
The optimal strategy of \eqref{P2} consist of a barrier strategy at level $b_\Lambda$ given by \eqref{optbarrierdual}, and the corresponding value function is given by \eqref{lagrangianbarrierDual}.
\end{thm}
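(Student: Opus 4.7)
The plan is to establish optimality via a standard verification argument for the HJB variational inequality associated with \eqref{P2}, closely mirroring Proposition~5 of \cite{AvPaPi07} and Theorem~2.1 of \cite{KyYa14}. Set $v_{\Lambda}(x):=\V_{\Lambda}^{D^{b_{\Lambda}}}(x)$ for $b_{\Lambda}$ as in \eqref{optbarrierdual}, and let $\mathcal{L}$ denote the infinitesimal generator of $X$. The target is the verification lemma: if $w\in C^{1}(\mathbb{R}_{+})$ (resp.\ $C^{2}$ in the unbounded variation case) satisfies $(\mathcal{L}-q)w(x)\leq 0$ and $w'(x)\geq 1$ for all $x\geq 0$, together with the boundary condition $w(0)=\Lambda(K-1)$ reflecting the Gerber--Shiu penalty at ruin (which $\V_{\Lambda}^{D^{b_{\Lambda}}}$ satisfies by construction), then $w(x)\geq V_{\Lambda}(x)$. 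Applied to $w=v_{\Lambda}$, this together with the lower bound from the admissibility of $D^{b_{\Lambda}}$ yields $v_{\Lambda}=V_{\Lambda}$.

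The verification lemma itself is proved by applying the It\^o--Meyer change of variables formula to the process $e^{-qt}w(L^{D}_{t\wedge\tau^{D}})+\Lambda e^{-q(t\wedge\tau^{D})}$ under an arbitrary admissible $D$, exploiting that $w$ is smooth enough for the jump-diffusion decomposition to be meaningful. The inequality $w'\geq 1$ controls the contribution of the dividend increments (since each payout of size $\Delta D_{t}$ increases the process by at most $\Delta D_{t}$ in value), while $(\mathcal{L}-q)w\leq 0$ controls the continuous semimartingale part. Taking expectations and letting $t\to\infty$ yields $w(x)\geq \V_{\Lambda}^{D}(x)$ for every admissible $D$.

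The remaining technical work is to check that $v_{\Lambda}$ satisfies the two HJB inequalities. For $x>b_{\Lambda}$ the function is linear with slope $1$, so $v_{\Lambda}'\equiv 1$ and one needs $(\mathcal{L}-q)v_{\Lambda}(x)\leq 0$; for $x\leq b_{\Lambda}$ one has the explicit form \eqref{lagrangianbarrierDual} and a direct computation using $(\mathcal{L}-q)W^{(q)}=(\mathcal{L}-q)Z^{(q)}=0$ shows that this identity holds with equality, while $v_{\Lambda}'(x)=Z^{(q)}(b_{\Lambda}-x)-qW^{(q)}(b_{\Lambda}-x)\xi_{\Lambda}(b_{\Lambda})=Z^{(q)}(b_{\Lambda}-x)\geq 1$ thanks to the smooth fit $\xi_{\Lambda}(b_{\Lambda})=0$ and the fact that $Z^{(q)}\geq 1$. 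The smooth fit at $b_{\Lambda}$ (granted by the choice \eqref{optbarrierdual}) is what glues the two regions into a $C^{1}$ (resp.\ $C^{2}$) function.

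The main obstacle is the HJB inequality $(\mathcal{L}-q)v_{\Lambda}(x)\leq 0$ on $(b_{\Lambda},\infty)$: on this region $v_{\Lambda}$ is linear and the inequality reduces to a pointwise statement about the Lévy measure and the constant $v_{\Lambda}(b_{\Lambda})-b_{\Lambda}$. The standard argument, as in \cite{KyYa14}, rewrites this inequality in terms of $\bar{Z}^{(q)}$ and $\xi_{\Lambda}$ and then uses the monotonicity of $\bar{Z}^{(q)}$ together with the defining relation $\bar{Z}^{(q)}(b_{\Lambda})=\Lambda-\psi'(0+)/q$ to conclude. Once this is in hand, the verification lemma closes the argument and the theorem follows.
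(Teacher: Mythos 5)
Your proposal follows essentially the same route as the paper, which at this point simply invokes the standard verification-lemma argument of Proposition 5 in \cite{AvPaPi07} and Theorem 2.1 in \cite{KyYa14} (see also \cite{Yin}); your sketch of the HJB inequalities, the smooth-fit identity $\xi_{\Lambda}(b_{\Lambda})=0$, and the It\^o--Meyer verification step is a faithful expansion of that citation. The only point worth flagging is that when $\Lambda\leq \psi'(0+)/q$ one has $b_{\Lambda}=0$ and smooth fit need not hold, but then $v_{\Lambda}$ is affine with slope $1$ on $(0,\infty)$ and only the generator inequality requires checking, so the argument still closes.
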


\subsection{Solution of \eqref{P1}}
As in the previous section let $b_0$ be the optimal barrier for \eqref{P2} with $\Lambda=0$, that is, the optimal barrier for the unconstrained problem, and let  $\bar{\Lambda}:=\sup \{\Lambda\geq 0: b_\Lambda=0\}\vee0$. We also consider the function $\Lambda:[b_0,\infty)\rightarrow \R_{+}$ defined by 
$$\Lambda(b):= \begin{cases}
0& \mbox{if } b=b_0\\
\bar{Z}^{(q)}(b)+\frac{\psi'(0+)}{q} &  \mbox{if } b>b_0.
\end{cases}
$$

\begin{prop}
For each $b \in (b_0,\infty)$ the barrier strategy at level $b$ is optimal for \eqref{P2} with $\Lambda(b)$. Also, this map is one-to-one onto $(\bar{\Lambda},\infty)$.
\begin{proof}
For $b\in (b_0,\infty)$ the . First, it is strictly increasing and goes to $\infty$ as $b$ goes to $\infty$, since so satisfies $\bar{Z}^{(q)}$. Finally,  $\Lambda(b)\geq0$ and satisfies the optimality condition by \eqref{optbarrierdual}.
\end{proof}
\end{prop}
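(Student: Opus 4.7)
The plan is to verify the two assertions in three moves: confirm $\Lambda(b)$ is well-defined and non-negative, invoke the optimal-barrier characterization \eqref{optbarrierdual} to identify $b_{\Lambda(b)}$ with $b$, and then use monotonicity plus limit computations at the endpoints of $(b_0,\infty)$ to pin down the image as $(\bar{\Lambda},\infty)$.

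First I would analyze $b_0$ via \eqref{optbarrierdual} applied at $\Lambda=0$. Two mutually exclusive cases arise: (a) $\psi'(0+)/q\geq 0$, whence $b_0=0$ and $\bar{\Lambda}=\psi'(0+)/q$; or (b) $\psi'(0+)/q<0$, whence $b_0=(\bar{Z}^{(q)})^{-1}(-\psi'(0+)/q)>0$ and $\bar{\Lambda}=0$. In each case one checks directly from the definition $\Lambda(b)=\bar{Z}^{(q)}(b)+\psi'(0+)/q$ that $\Lambda(b_0)$ coincides with $\bar{\Lambda}$ and that $\Lambda(b)>\bar{\Lambda}\geq 0$ for every $b>b_0$, since $\bar{Z}^{(q)}$ is strictly increasing (its derivative $Z^{(q)}$ is bounded below by $1$). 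In particular $\Lambda(b)-\psi'(0+)/q=\bar{Z}^{(q)}(b)>0$, so the regime $\Lambda>\psi'(0+)/q$ of \eqref{optbarrierdual} applies.

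For the optimality claim I would simply substitute $\Lambda=\Lambda(b)$ into \eqref{optbarrierdual}:
\begin{equation*}
b_{\Lambda(b)}=(\bar{Z}^{(q)})^{-1}\!\left(\Lambda(b)-\frac{\psi'(0+)}{q}\right)=(\bar{Z}^{(q)})^{-1}(\bar{Z}^{(q)}(b))=b,
\end{equation*}
which, by the theorem characterizing the optimum of \eqref{P2}, shows that the barrier strategy at level $b$ is indeed optimal for \eqref{P2} with Lagrange parameter $\Lambda(b)$.

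For the bijection statement I would note that $\Lambda(\cdot)$ is continuous and, because $\bar{Z}^{(q)}$ is strictly increasing with $\bar{Z}^{(q)}(b)\to\infty$ as $b\to\infty$, the map $\Lambda$ is strictly increasing on $(b_0,\infty)$ with $\Lambda(b)\to\infty$. Combined with the endpoint value $\Lambda(b_0^+)=\bar{\Lambda}$ verified in the case analysis above, the intermediate value theorem gives that $\Lambda(\cdot)$ is a continuous strictly increasing bijection from $(b_0,\infty)$ onto $(\bar{\Lambda},\infty)$. The only mild obstacle is the bookkeeping in the case split for $\bar{\Lambda}$, but once $\bar{\Lambda}=\max\{0,\psi'(0+)/q\}$ is identified, everything follows from the explicit form of $\Lambda(\cdot)$ and standard properties of $\bar{Z}^{(q)}$.
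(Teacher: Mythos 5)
Your proposal is correct and follows essentially the same route as the paper's (very terse) proof: use the explicit formula $\Lambda(b)=\bar{Z}^{(q)}(b)+\psi'(0+)/q$, the strict monotonicity and divergence of $\bar{Z}^{(q)}$, and the characterization \eqref{optbarrierdual} to identify $b_{\Lambda(b)}=b$. Your case analysis identifying $\bar{\Lambda}=\max\{0,\psi'(0+)/q\}$ and verifying $\Lambda(b_0^+)=\bar{\Lambda}$ is a welcome detail that the paper leaves implicit.
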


Now we show that the complementary slackness condition if satisfied.

\begin{prop}\label{optimalpairdual}
For each $x\geq0$ there exists $\bar{K}_{x}\geq0$ such that if $K>\bar{K}_{x}$ there exists $b^*$ such that :
\begin{enumerate}[(i)]
\item $\mathbb{E}_{x}\left[e^{-q \tau^{D^{b^*}}}\right]\leq K$ and
\item $\Lambda(b^*)\left(K-\mathbb{E}_{x}\left[e^{-q \tau^{D^{b^*}}}\right]\right)=0$.
\end{enumerate}
\begin{proof}
Again, let $\varPsi_x(b):=\mathbb{E}_{x}\left[e^{-q \tau^{D^b}}\right]$ given by 
$$\varPsi_x(b)=\frac{Z^{(q)}(b-x)}{Z^{(q)}(b)}.$$
Note that this expression is valid for any $b\geq0$. Let $x>0$. To get that $\frac{d \varPsi_x(b)}{db}< 0$, a simple calculation shows that this condition is equivalent to $q\frac{W^{(q)}(b-x)}{Z^{(q)}(b-x)}<q\frac{W^{(q)}(b)}{Z^{(q)}(b)}$. This is true since $\ln (Z^{(q)}(x))$ is strictly increasing. Now, using \eqref{limitqfact}, we define $\bar{K}_{x}:=\lim\limits_{b\rightarrow\infty}\varPsi_x(b)=e^{-\Phi(q) x}$. Now, the proof follows identically as in Proposition \ref{optimalpair}. For the case $x=0$, note that $\bar{K}_{0}=\varPsi_0(b)=1$ for all $b$, so $b^*=b_0$ satisfies the conditions.
\end{proof}
\end{prop}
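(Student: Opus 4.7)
The plan is to mirror the proof of Proposition \ref{optimalpair} using the Dual-model analog
\[
\varPsi_x(b) := \mathbb{E}_x\!\left[e^{-q\tau^{D^b}}\right] = \frac{Z^{(q)}(b-x)}{Z^{(q)}(b)}, \qquad b>0,
\]
which is immediate from identity \eqref{eqrestdual} already established in the preceding proof (recall $Z^{(q)}(z)=1$ for $z\leq 0$). The argument then reduces to establishing strict monotonicity of $\varPsi_x$ in $b$, computing its limit at infinity to obtain $\bar{K}_x$, and invoking the intermediate value theorem in the same case split used classically.

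For the monotonicity, I would differentiate to get
\[
\frac{d\varPsi_x(b)}{db} = \frac{q\left[W^{(q)}(b-x)\,Z^{(q)}(b) - W^{(q)}(b)\,Z^{(q)}(b-x)\right]}{Z^{(q)}(b)^{2}},
\]
so $\varPsi_x$ is strictly decreasing (for $x>0$ and $b>x$) provided $b\mapsto W^{(q)}(b)/Z^{(q)}(b)$ is strictly increasing on $(0,\infty)$, equivalently $\log Z^{(q)}$ is strictly convex there. For the asymptotic value I would use \eqref{limitqfact}: $W^{(q)}(b)\sim e^{\Phi(q)b}/\psi'(\Phi(q))$ as $b\to\infty$, and integrating gives $Z^{(q)}(b)\sim \frac{q}{\Phi(q)\psi'(\Phi(q))}e^{\Phi(q)b}$, whence
\[
\bar{K}_x := \lim_{b\to\infty}\varPsi_x(b) = e^{-\Phi(q)x}.
\]

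The conclusion then follows by the same dichotomy as in the classical case. If $K\geq \varPsi_x(b_0)$, the unconstrained barrier $b_0$ already satisfies the ruin-time constraint and we set $b^*:=b_0$; since $\Lambda(b_0)=0$, condition (ii) is automatic. If $\bar{K}_x<K<\varPsi_x(b_0)$, continuity of $\varPsi_x$ together with its strict monotonicity on $(b_0,\infty)$ and the limit $\varPsi_x(b)\to\bar{K}_x$ yield, via the intermediate value theorem, a unique $b^*>b_0$ with $\varPsi_x(b^*)=K$, so that (i) holds with equality and (ii) follows trivially. The degenerate case $x=0$ is handled separately: $\varPsi_0(b)=Z^{(q)}(b)/Z^{(q)}(b)\equiv 1$, so $\bar{K}_0=1$ and the choice $b^*=b_0$ works for any feasible $K$.

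The main obstacle I foresee is the strict convexity of $\log Z^{(q)}$ on $(0,\infty)$, since the Dual-model setting imposes no assumption on the L\'evy measure $\nu$ (unlike the completely monotone hypothesis used in Section~\ref{SecdeFinetti}). I would try to secure it by combining the log-concavity of $W^{(q)}$ from Remark \ref{logconcaveprop} with the identity $(Z^{(q)})'=qW^{(q)}$, so that the required inequality $W^{(q)'}(b)Z^{(q)}(b)>qW^{(q)}(b)^{2}$ is reduced to an ODE-style comparison anchored at $b=0$ (where it is easily checked using the initial values of $W^{(q)}$ and $W^{(q)'}$) and propagated forward. Once this monotonicity step is in hand, the rest of the argument is a direct transcription of the proof of Proposition \ref{optimalpair}.
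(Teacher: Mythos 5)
Your proposal reproduces the paper's argument in every structural respect: the same formula $\varPsi_x(b)=Z^{(q)}(b-x)/Z^{(q)}(b)$ from \eqref{eqrestdual}, the same derivative computation reducing monotonicity to the claim that $W^{(q)}/Z^{(q)}$ is increasing (equivalently, that $\log Z^{(q)}$ is convex), the same limit $\bar{K}_x=e^{-\Phi(q)x}$ via \eqref{limitqfact}, the same dichotomy $K\geq\varPsi_x(b_0)$ versus $\bar{K}_x<K<\varPsi_x(b_0)$, and the same separate treatment of $x=0$. The one place you depart from the paper is precisely the step you flag as the main obstacle, and there your proposed fix would fail. Log-concavity of $W^{(q)}$ gives $W^{(q)'}(z)W^{(q)}(b)\geq W^{(q)'}(b)W^{(q)}(z)$ for $z\leq b$; integrating in $z$ over $[0,b]$ yields
\begin{equation*}
W^{(q)'}(b)Z^{(q)}(b)-q\,W^{(q)}(b)^{2}\;\leq\; W^{(q)'}(b)-q\,W^{(q)}(b)W^{(q)}(0),
\end{equation*}
i.e.\ an \emph{upper} bound on the quantity you need to bound from \emph{below}, so log-concavity of $W^{(q)}$ pushes in the wrong direction. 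The forward-propagation idea also breaks down: with $h:=W^{(q)'}Z^{(q)}-q(W^{(q)})^{2}$ one has $h'=W^{(q)''}Z^{(q)}-qW^{(q)}W^{(q)'}$, which is negative wherever $W^{(q)''}\leq0$ (e.g.\ on $(0,b_0)$ when $b_0>0$), so positivity at $0$ does not propagate.

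The correct justification — which is what the paper is implicitly invoking when it writes that ``$\ln(Z^{(q)}(x))$ is strictly increasing'' (read: strictly convex, i.e.\ its derivative $qW^{(q)}/Z^{(q)}$ is strictly increasing) — is the standard two-sided exit identity: for $0\leq y\leq b$, the quantity $Z^{(q)}(y)-Z^{(q)}(b)\,W^{(q)}(y)/W^{(q)}(b)$ equals the expectation of $e^{-qT}$ over the event that the (unreflected) spectrally negative process started at $y$ exits $[0,b]$ downward at time $T$ before passing above $b$; it is therefore nonnegative, which says exactly that $Z^{(q)}/W^{(q)}$ is non-increasing, and it is strictly positive for $0\le y<b$ because a process with no monotone paths exits downward with positive probability. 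Substituting that identity for your log-concavity argument, the rest of your proof goes through exactly as in the paper.
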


A remark analogous to \ref{remdonothing} also holds in this model and therefore we can also prove the next lemma.
\begin{lemma}\label{limit}
Let $x\geq0$. If $K=\bar{K}_{x}$ then $\Lambda(b)\Big(K-\mathbb{E}_x\Big[e^{-q \tau^{D^b}}\Big]\Big)\rightarrow 0$ as $b\rightarrow \infty$.
\end{lemma}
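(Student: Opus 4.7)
The plan is to mirror the proof of Lemma \ref{limitclassical}, using weak duality to sandwich $\Lambda(b)(K-\varPsi_x(b))$ between something nonnegative and something nonpositive, and conclude both bounds are tight in the limit. Throughout write $\varPsi_x(b)=\mathbb{E}_x[e^{-q\tau^{D^b}}]=Z^{(q)}(b-x)/Z^{(q)}(b)$ as in Proposition \ref{optimalpairdual}.

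First I would observe that $\Lambda(b)(K-\varPsi_x(b))\leq 0$ for every $b\geq b_0$. Indeed $\Lambda(b)\geq 0$ by definition, and Proposition \ref{optimalpairdual} already established that $\varPsi_x$ is strictly decreasing with $\lim_{b\to\infty}\varPsi_x(b)=e^{-\Phi(q)x}=\bar{K}_x$, so $\varPsi_x(b)\geq\bar{K}_x=K$ for all finite $b$. Next, the remark analogous to Remark \ref{remdonothing} (the paragraph immediately preceding the lemma alludes to it) identifies $\bar{K}_x=\mathbb{E}_x[e^{-q\tau^0}]$, so when $K=\bar{K}_x$ the do-nothing strategy is admissible for \eqref{P1}, which yields $V(x)\geq 0$.

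The crucial analytic input is that $\V^{D^b}(x)\to 0$ as $b\to\infty$. From \eqref{ValuefunctqscaleDual},
\begin{equation*}
\V^{D^b}(x)=-k(b-x)+\frac{Z^{(q)}(b-x)}{Z^{(q)}(b)}\,k(b).
\end{equation*}
Using \eqref{limitqfact} together with the definitions $Z^{(q)}(z)=1+q\int_0^z W^{(q)}$ and $\bar{Z}^{(q)}(z)=\int_0^z Z^{(q)}$, one obtains the asymptotics
\begin{equation*}
W^{(q)}(z)\sim\tfrac{e^{\Phi(q)z}}{\psi'(\Phi(q))},\qquad Z^{(q)}(z)\sim\tfrac{q\,e^{\Phi(q)z}}{\Phi(q)\,\psi'(\Phi(q))},\qquad\bar{Z}^{(q)}(z)\sim\tfrac{q\,e^{\Phi(q)z}}{\Phi(q)^2\,\psi'(\Phi(q))},
\end{equation*}
and therefore the leading exponential terms in $k(b)=\bar{Z}^{(q)}(b)-Z^{(q)}(b)/\Phi(q)+\psi'(0+)/q$ cancel. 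A careful next-order expansion of this cancellation, together with $Z^{(q)}(b-x)/Z^{(q)}(b)\to e^{-\Phi(q)x}$, gives $\V^{D^b}(x)\to 0$. This asymptotic computation is the main technical obstacle, since the cancellation in $k$ is delicate; if a direct argument becomes cumbersome, one can instead appeal to a probabilistic interpretation, namely that the expected discounted dividends paid under a barrier strategy at level $b$ vanish as $b\to\infty$ because the discounted first-passage time of $X$ to $b$ vanishes.

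With these three ingredients in hand, weak duality gives the squeeze
\begin{align*}
0\leq V(x)&\leq\inf_{\Lambda\geq 0}V_{\Lambda}(x)\leq\lim_{b\to\infty}V_{\Lambda(b)}(x)\\
&=\lim_{b\to\infty}\bigl[\V^{D^b}(x)+\Lambda(b)(K-\varPsi_x(b))\bigr]=\lim_{b\to\infty}\Lambda(b)\bigl(K-\mathbb{E}_x[e^{-q\tau^{D^b}}]\bigr)\leq 0,
\end{align*}
which forces the limit to equal $0$, completing the proof. The case $x=0$ is degenerate since $\bar{K}_0=\varPsi_0(b)=1$ for every $b$, making the product identically zero.
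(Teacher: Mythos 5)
Your overall strategy is exactly the one the paper intends: it gives no separate proof of Lemma \ref{limit}, deferring to the argument of Lemma \ref{limitclassical}, which is precisely your sandwich (nonpositivity of $\Lambda(b)(K-\varPsi_x(b))$ for $b>b_0$, feasibility of the do-nothing strategy when $K=\bar{K}_x$, vanishing of $\V^{D^b}(x)$, and the weak-duality chain). However, the one step that is not routine in the Dual model is the one you flag, $\V^{D^b}(x)\to0$, and there your justification has a genuine gap. The cancellation of the leading $e^{\Phi(q)b}$ terms in $k$ only gives $k(b)=o\bigl(e^{\Phi(q)b}\bigr)$, which is not enough: writing
\begin{equation*}
\V^{D^b}(x)=\bigl[k(b)-k(b-x)\bigr]-k(b)\Bigl[1-\frac{Z^{(q)}(b-x)}{Z^{(q)}(b)}\Bigr]
\end{equation*}
and using $Z^{(q)}(b-x)/Z^{(q)}(b)\to e^{-\Phi(q)x}$, you see that if $k(b)$ tended to a nonzero constant $c$ the value would converge to $-c\,(1-e^{-\Phi(q)x})\neq0$ for $x>0$. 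So the statement you actually need is $\lim_{b\to\infty}k(b)=0$, which a ``next-order expansion'' does not automatically deliver and which you never establish. (In the spectrally negative case of Lemma \ref{limitclassical} the analogous step is immediate from \eqref{Valuefunctqscale} because $W^{(q)'}(b)\to\infty$; here it is not.)

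The missing fact has a clean proof. Differentiating, $k'(\varsigma)=Z^{(q)}(\varsigma)-\frac{q}{\Phi(q)}W^{(q)}(\varsigma)=\E_{\varsigma}\bigl[e^{-q\tau_0^-}\bigr]\in[0,1]$, the one-sided exit identity for the spectrally negative process $-X$; its Laplace transform, computed from \eqref{wqlaplace} and continued to the origin, gives $\int_0^\infty\E_w\bigl[e^{-q\tau_0^-}\bigr]\,dw=\frac{1}{\Phi(q)}-\frac{\psi'(0+)}{q}=-k(0)$, whence $k(\infty)=k(0)+\int_0^\infty k'(w)\,dw=0$. Since then also $k(b)-k(b-x)=\int_{b-x}^{b}k'(w)\,dw\to0$, the display above yields $\V^{D^b}(x)\to0$ and your squeeze closes. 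Your alternative probabilistic route (discounting to the first passage above $b$) can also be made rigorous, but it too omits an ingredient: the payoff collected upon first exceeding $b$ includes the overshoot $X_{T_b^+}-b$, so one must bound its discounted expectation, which uses the standing finite-mean assumption on the L\'evy measure in addition to $\E_x\bigl[e^{-qT_b^+}\bigr]\to0$. With $k(\infty)=0$ supplied, the rest of your argument is correct and coincides with the paper's.
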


As in the previous section we derive the main result.

\begin{thm}\label{strongduality} Let $x\geq 0$, $K\geq0$ and $V(x)$ be the optimal solution to \eqref{P1}. Then $$V(x)\geq \underset{\Lambda\geq 0}\inf\,\,V_{\Lambda}(x)$$
 and therefore, $\underset{\Lambda\geq 0}\inf\,\,V_{\Lambda}(x)=V(x)$.
\end{thm}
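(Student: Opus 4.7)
The plan is to mirror the blueprint of the analogous strong duality theorem in Section \ref{SecdeFinetti}, splitting on the position of $K$ relative to the threshold $\bar K_x=e^{-\Phi(q)x}$ identified in Proposition \ref{optimalpairdual}. Weak duality $V(x)\leq\inf_{\Lambda\geq 0}V_\Lambda(x)$ is automatic from the minimax reformulation, so the task is to establish the reverse inequality in three regimes.

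For $K>\bar K_x$, I would invoke Proposition \ref{optimalpairdual} to obtain a barrier level $b^*$ satisfying $\E_x[e^{-q\tau^{D^{b^*}}}]\leq K$ together with the complementary slackness identity $\Lambda(b^*)\bigl(K-\E_x[e^{-q\tau^{D^{b^*}}}]\bigr)=0$. Since the barrier strategy at $b^*$ is optimal for \eqref{P2} at $\Lambda=\Lambda(b^*)$, substituting into \eqref{lagrangian} annihilates the penalty term, giving $V_{\Lambda(b^*)}(x)=\V^{D^{b^*}}(x)\leq V(x)$ by admissibility of $D^{b^*}$. For $K=\bar K_x$, I would note that the do-nothing strategy $D\equiv 0$ is feasible (by the dual-model analog of Remark \ref{remdonothing}) and has value $0$, so $V(x)\geq 0$; combining this with $\V^{D^b}(x)\to 0$ as $b\to\infty$---which follows from \eqref{ValuefunctqscaleDual} together with the exponential asymptotics of $Z^{(q)}$, $\bar Z^{(q)}$, and hence $k$ induced by \eqref{limitqfact}---and Lemma \ref{limit}, I obtain $\inf_\Lambda V_\Lambda(x)\leq \lim_{b\to\infty} V_{\Lambda(b)}(x)=0=V(x)$.

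For $K<\bar K_x$, the strict monotonicity of $\varPsi_x(b)=Z^{(q)}(b-x)/Z^{(q)}(b)$ established in Proposition \ref{optimalpairdual} forces every admissible strategy to violate the constraint, so \eqref{P1} is infeasible and $V(x)=-\infty$ by the convention $\sup\emptyset=-\infty$. On the dual side, there exists $\varepsilon>0$ with $K-\E_x[e^{-q\tau^{D^b}}]\leq-\varepsilon$ for every $b$, and combined with $\Lambda(b)\to\infty$ (inherited from $\bar Z^{(q)}(b)\to\infty$) this drives $V_{\Lambda(b)}(x)\to-\infty$, so both sides agree at $-\infty$. The only delicate step is verifying $\V^{D^b}(x)\to 0$ in the boundary case $K=\bar K_x$, which requires tracking leading and subleading terms to see the cancellation between $k(b-x)$ and $k(b)\,Z^{(q)}(b-x)/Z^{(q)}(b)$; the remainder of the argument is bookkeeping once Proposition \ref{optimalpairdual} and Lemma \ref{limit} are in hand.
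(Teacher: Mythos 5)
Your proof follows the same three-case decomposition ($K>\bar{K}_x$, $K=\bar{K}_x$, $K<\bar{K}_x$) that the paper uses for the classical model and then invokes verbatim for the Dual model (``as in the previous section''), relying on Proposition \ref{optimalpairdual}, Lemma \ref{limit}, and weak duality in exactly the intended way. The one step you flag as delicate, namely $\V^{D^b}(x)\to 0$ as $b\to\infty$ via the cancellation of the leading exponential terms in $k$, is indeed the only Dual-specific verification, and the paper leaves it implicit as well.
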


\section{Numerical example}\label{numerics}

In this section we illustrate with numerical examples the previous results. The main difficulty here is that, in most cases, there are no closed form expression for scale functions. Hence, we will follow a numerical procedure presented in \cite{surya2008} to approximate the scale functions by Laplace transform inversion of \eqref{wqlaplace}. We do the same to approximate derivatives of the scale functions and use the trapezoidal rule to calculate integrals of it.

\begin{ex}\label{exdeFin}
In this example we consider the Cram\'er-Lundberg model with income premium rate $c=1$, Poisson process intensity $\lambda=1$ and a heavy-tailed Pareto Type II distributed claims with density function $p(x)=1.5\left(1+x\right)^{-2.5}$, also know as Lomax(1,1.5). Note that this density is a completely monotone function. In this example $q=0.05$. In this case the optimal barrier for the unconstrained problem, that is, $b_0=0.42$. Figure \ref{figrest} shows the function $\Psi_b(x)$ for different values of $b$. The figure also shows pairs $(x,K)$ for which the problem has biding, infeasible and inactive constraints and when the do-nothing strategy is optimal. The latter corresponds to pairs of the form $(x,\bar{K}_x)$.  A plot of the map $\Lambda(b)$ is presented in Figure \ref{figmap}. In this case we obtain that $\bar{\Lambda}=0$ since $b_0>0$, and a strictly increasing map on $[b_0,\infty)$.

\begin{figure}[h!]
   \begin{subfigure}[b]{.48\linewidth}
      \includegraphics[width=1\textwidth]{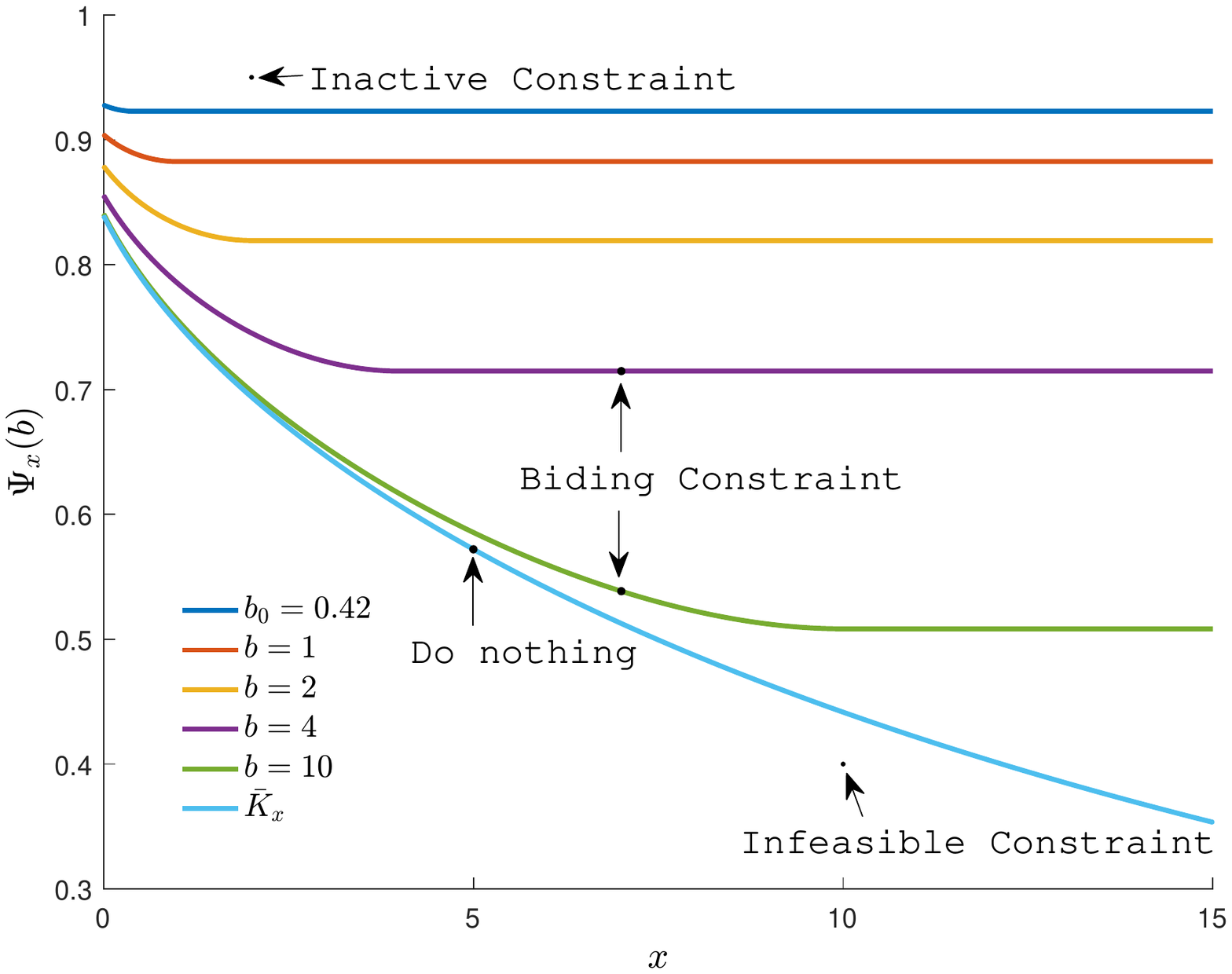}
	\caption{Example \ref{exdeFin}}
   \end{subfigure}
\begin{subfigure}[b]{.48\linewidth}
      \includegraphics[width=1\textwidth]{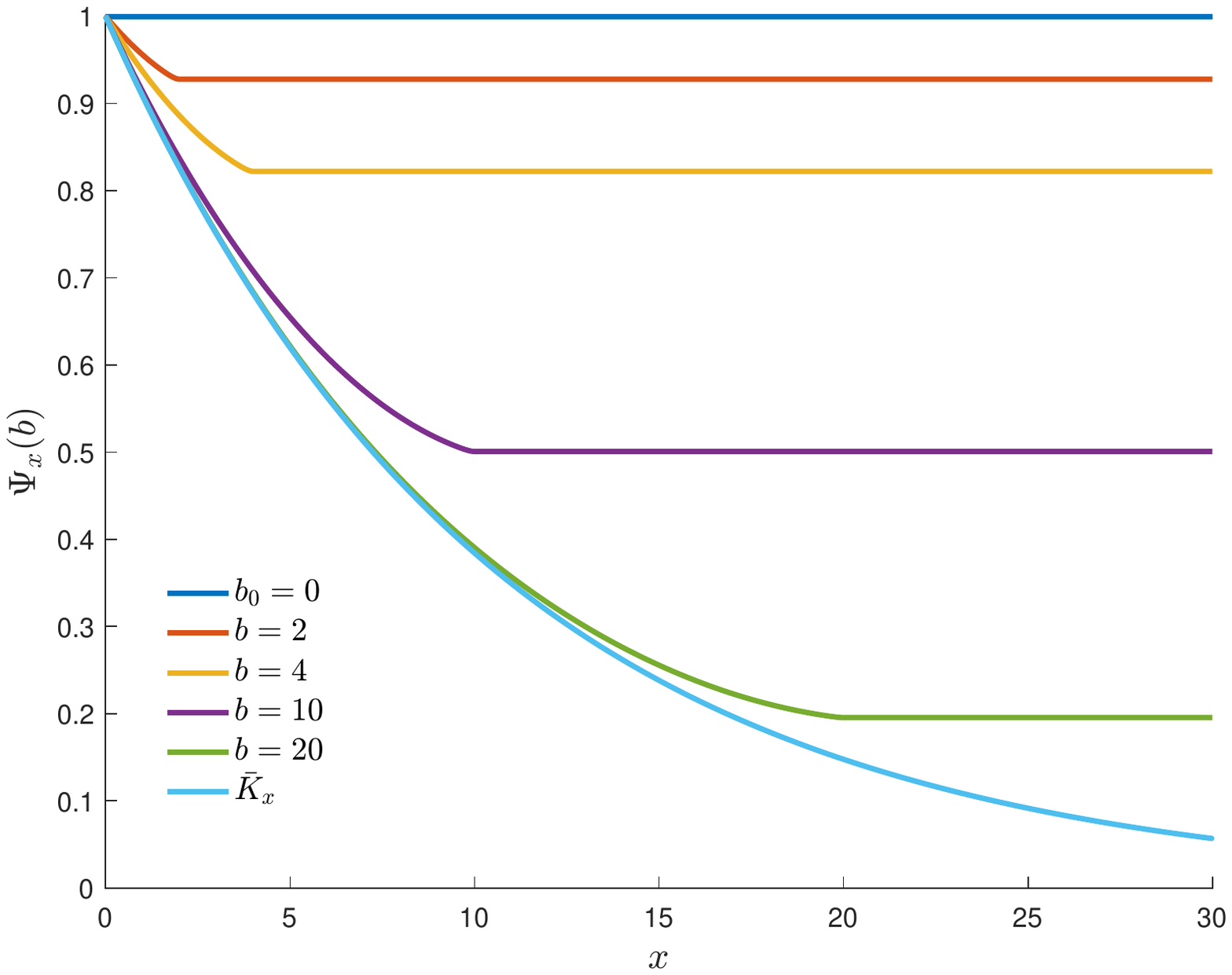}
	\caption{Example \ref{exDual}}
   \end{subfigure}
      \caption{$\Psi_x(b)$ as a function of $x$ for different values of $b$.}\label{figrest}
\end{figure}

\begin{figure}[t!]
   \begin{subfigure}[b]{.48\linewidth}
      \includegraphics[width=1\textwidth]{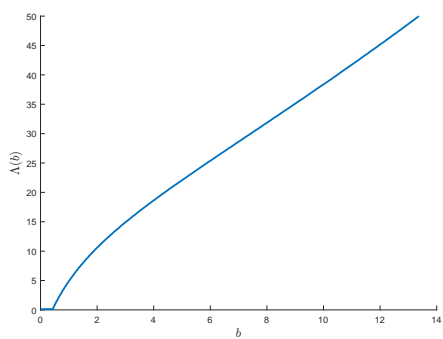}
	\caption{Example \ref{exdeFin}}
   \end{subfigure}
\begin{subfigure}[b]{.48\linewidth}
      \includegraphics[width=1\textwidth]{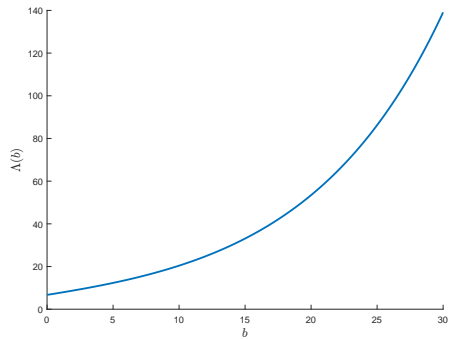}
	\caption{Example \ref{exDual}}
   \end{subfigure}
      \caption{Map $\Lambda(b)$.}\label{figmap}
\end{figure}

Figure \ref{figbs} shows the value of $b^*$ from Proposition \ref{optimalpair} for different values of $K$ as a function of $x$. From this figure we can extract the optimal policy for the constrained problem. Note that for some values of $x$ there is no $b^*$, for these values the problem is infeasible. The figure also shows the value of $b_0$ for reference. The value function for the unconstrained and constrained problems for few levels of $K$ is showed in Figure \ref{figoptimal}.
\end{ex}

\begin{figure}[t!]
   \begin{subfigure}[b]{.48\linewidth}
      \includegraphics[width=1\textwidth]{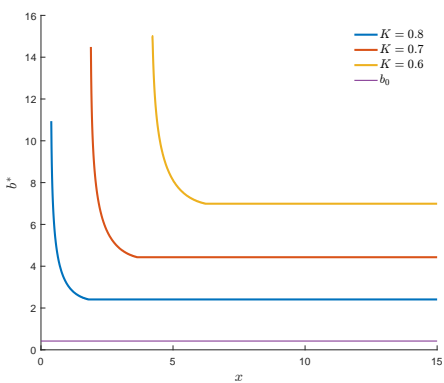}
	\caption{Example \ref{exdeFin}}
   \end{subfigure}
\begin{subfigure}[b]{.48\linewidth}
      \includegraphics[width=1\textwidth]{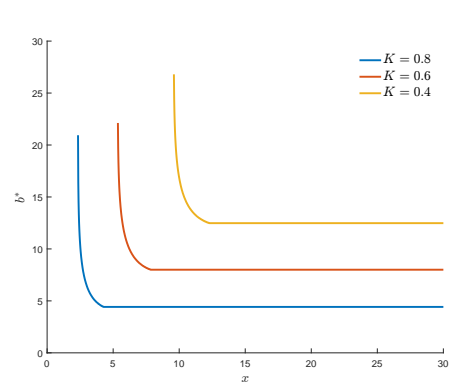}
	\caption{Example \ref{exDual}}
   \end{subfigure}
      \caption{$b^*$ for fixed values of $K$ as a function of $x$.}\label{figbs}
\end{figure}

\begin{figure}[t!]
   \begin{subfigure}[b]{.48\linewidth}
      \includegraphics[width=1\textwidth]{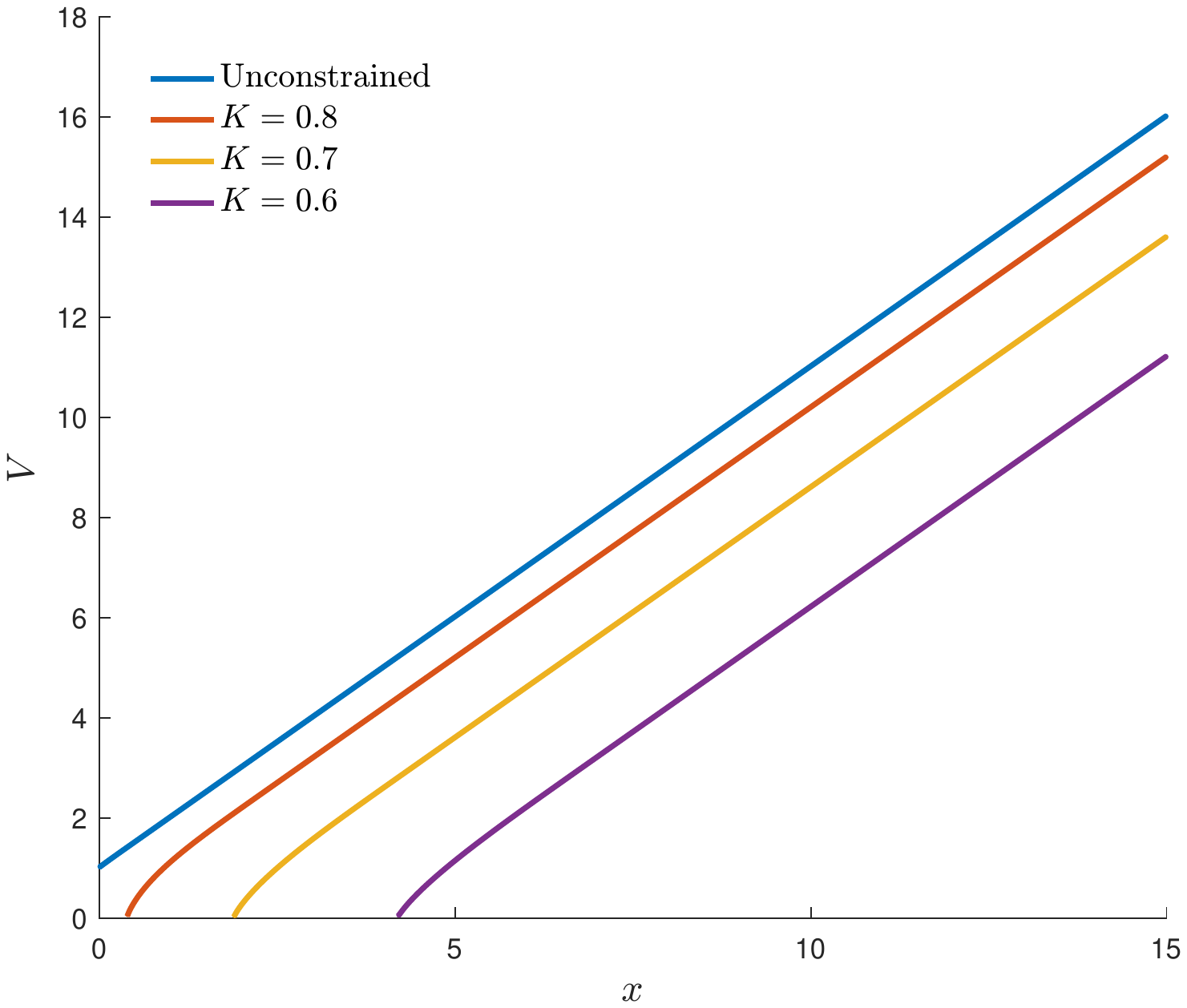}
	\caption{Example \ref{exdeFin}}
   \end{subfigure}
\begin{subfigure}[b]{.48\linewidth}
      \includegraphics[width=1\textwidth]{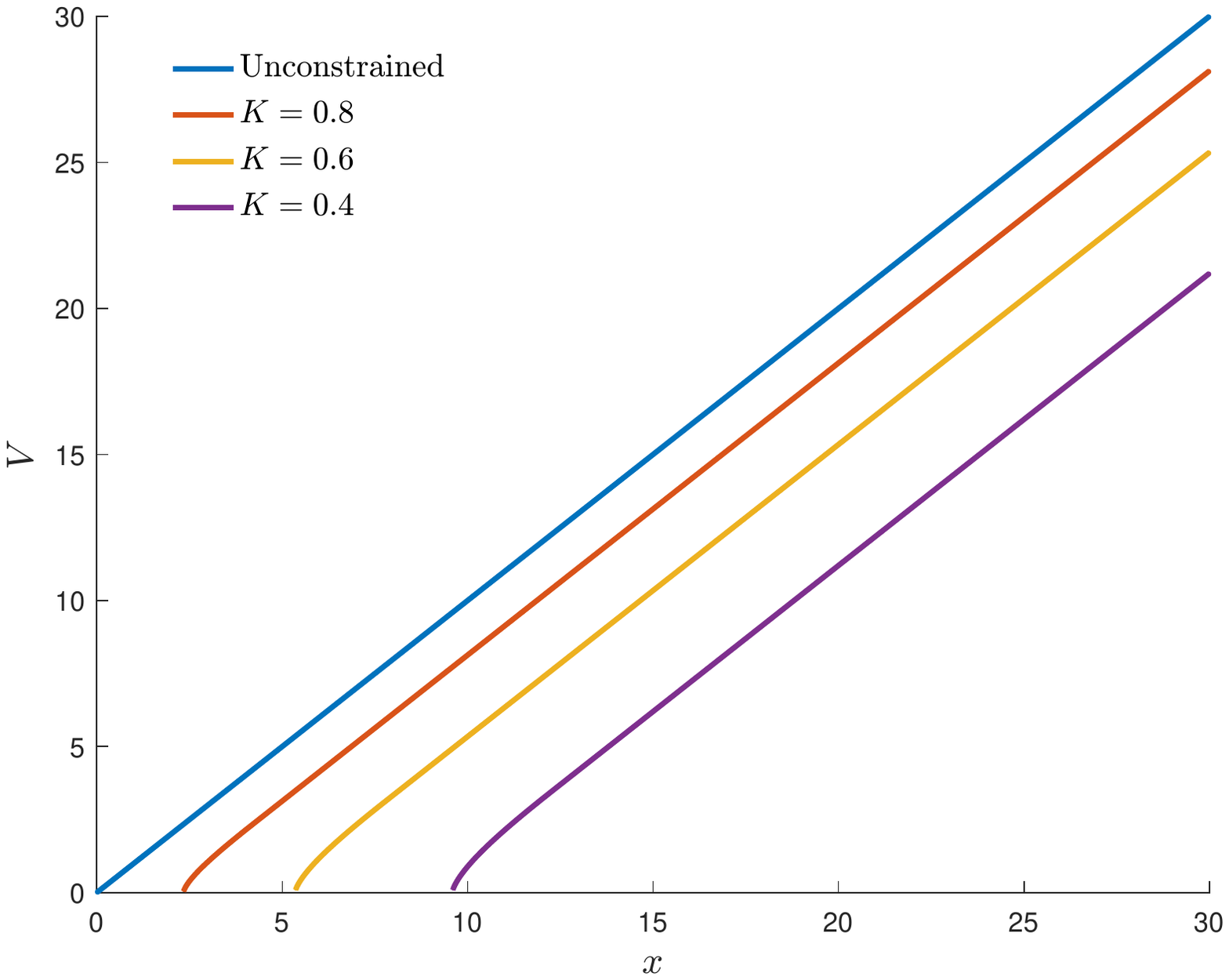}
	\caption{Example \ref{exDual}}
   \end{subfigure}
      \caption{Optimal value function $V$ for the constrained problem with different values of $K$.}\label{figoptimal}
\end{figure}

\begin{ex}\label{exDual}
We now consider problem with the Dual model. In this example the reserves process is $-X$, where $X$ follows the Cram\'er-Lundberg model plus a diffusion. The parameters are $c=1$, the intensity of the jumps is $\lambda=0.4$, the distribution of the jumps is Gamma(2,1) (note that this distribution doesn't have a completely monotone density) and $\sigma=0.5$. In this example $q=0.03$. In this case $b_0=0$ and $\bar{\Lambda}=6.71$. The results are shown in the same figures as in the previous example. Since $b_0=0$, if the problem is feasible then the constraint is active, Figure \ref{figrest}.
\end{ex}

\begin{ex}
We now consider an example of the problem wit transaction cost. The reserves process follows an $\alpha$-stable L\'evy process with $\alpha=1.5$. Note that this is a pure jump unbounded variation process. We take $q=0.1$ an the transaction cost $\beta=0.01$. Figure \ref{figPhiTran} shows contour plots of the function $\Psi_x(b_-,b_+)$ for $x=3,10$. This figure also shows the curve described by the map $\Lambda\mapsto(b_-^{\Lambda},b_+^{\Lambda})$. The values of the optimal pair as a function of $\Lambda$ and the values of $b_{\Lambda}$ are shown in Figure \ref{figoptb}.

\begin{figure}[t!]
   \begin{subfigure}[b]{.495\linewidth}
      \includegraphics[width=1\textwidth]{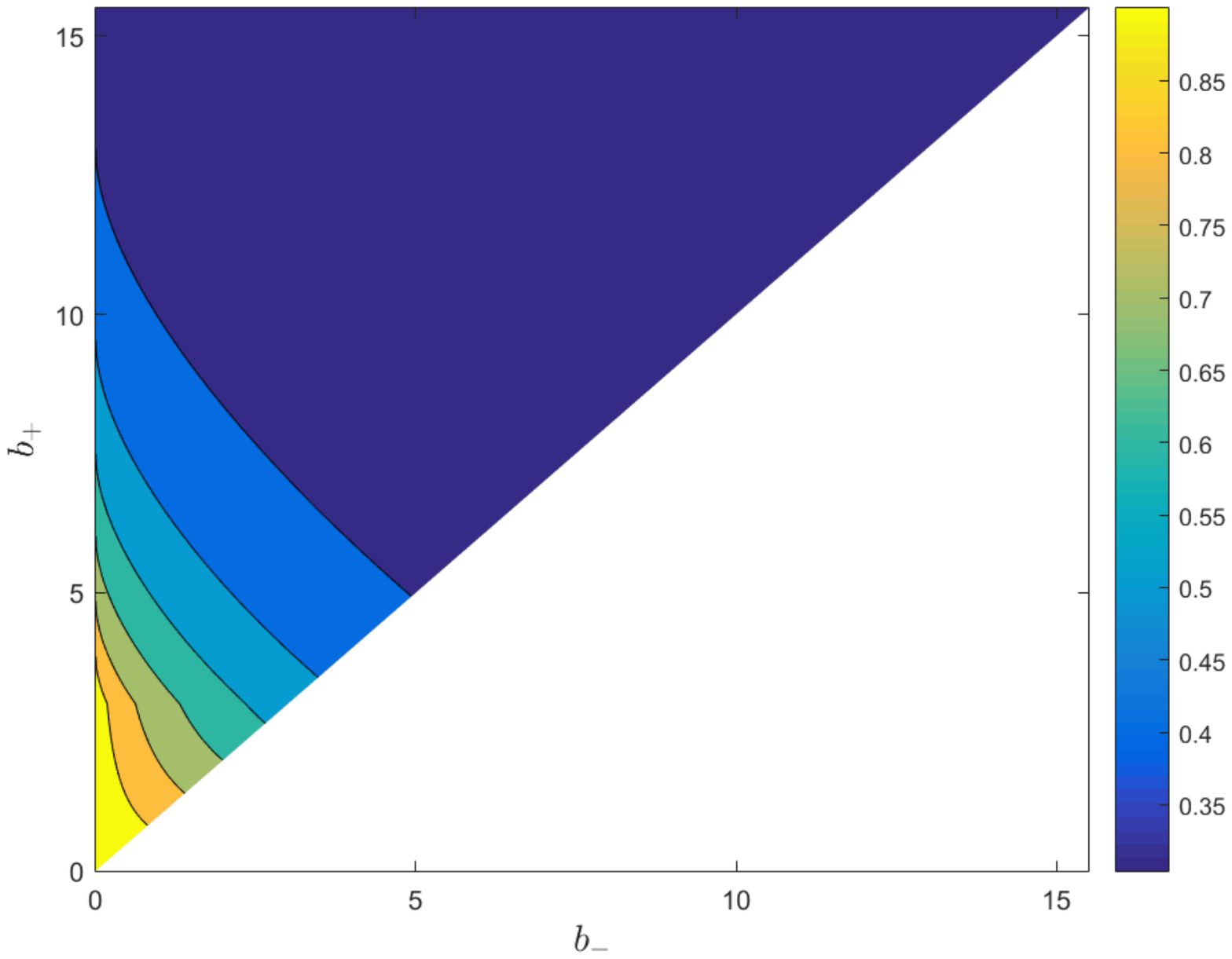}
	\caption{$x=3$, $\bar{K}_x=0.303$}
   \end{subfigure}
\begin{subfigure}[b]{.485\linewidth}
      \includegraphics[width=1\textwidth]{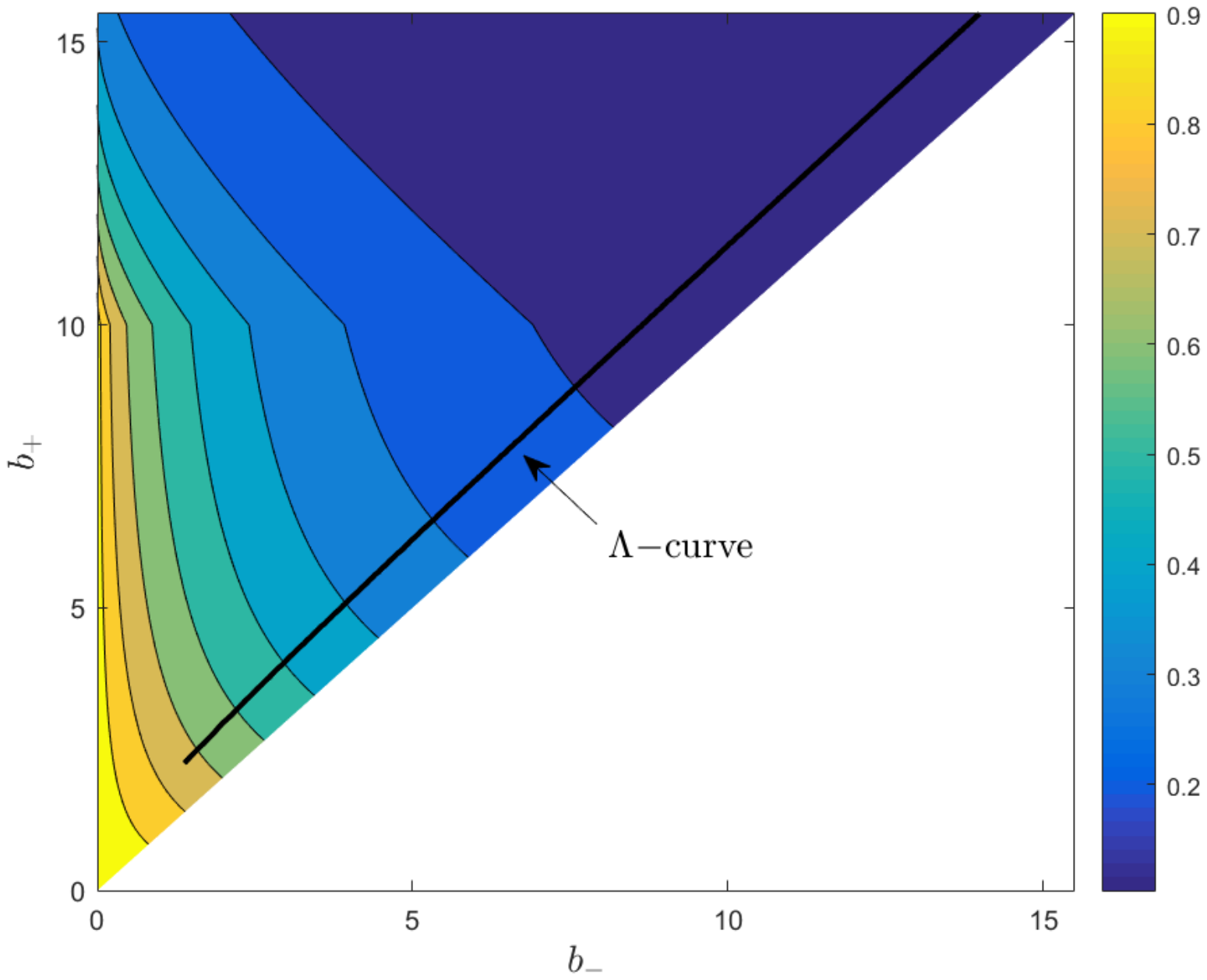}
	\caption{$x=10$, $\bar{K}_x=0.097$}
   \end{subfigure}
      \caption{Contour plots of $\Psi_x(b_-,b_+)$.}\label{figPhiTran}
\end{figure}

\begin{figure}[t!]
      \includegraphics[width=0.5\textwidth]{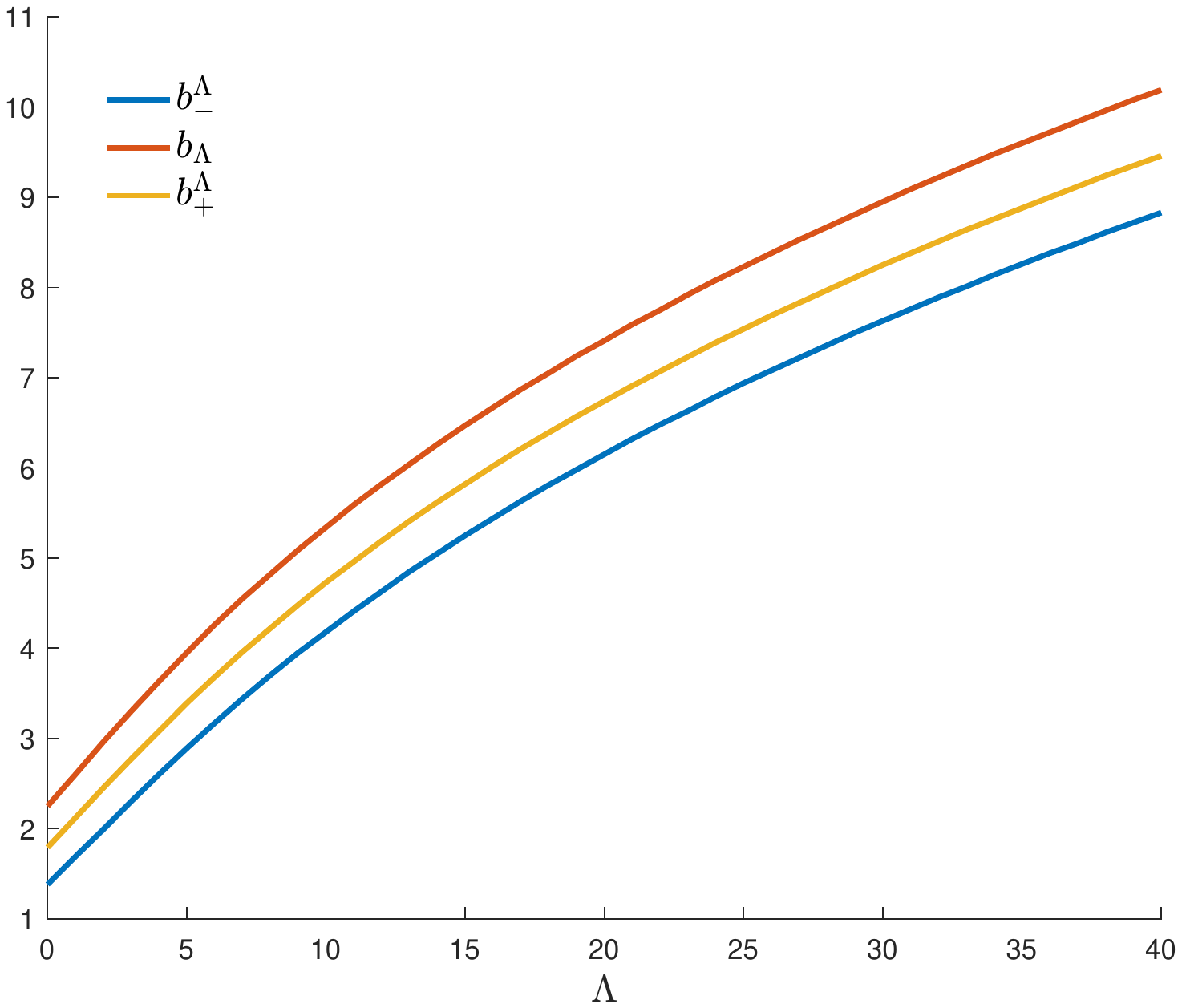}
      \caption{Optimal pair $(b_-^{\Lambda},b_+^{\Lambda})$ and $b_{\Lambda}$.}\label{figoptb}
\end{figure}

Finally, Figure \ref{figoptL} shows the value of $\Lambda^*$ from Proposition \ref{optLambda} for different values of $K$ as a function of $x$, and the value functions for the unconstrained and constrained problems.

\begin{figure}[t!]
   \begin{subfigure}[b]{.495\linewidth}
      \includegraphics[width=1\textwidth]{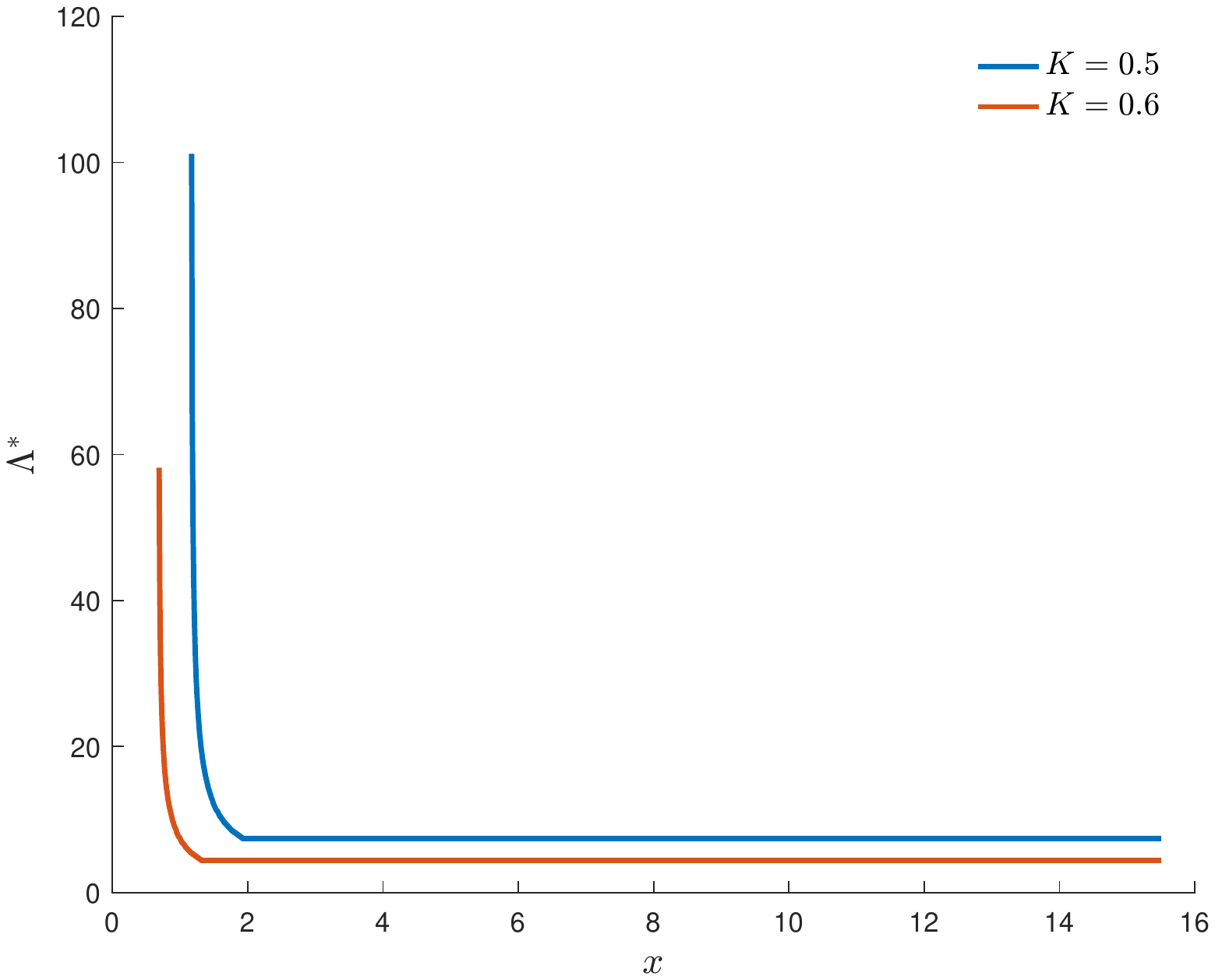}
	\caption{$\Lambda^*$}
   \end{subfigure}
\begin{subfigure}[b]{.485\linewidth}
      \includegraphics[width=1\textwidth]{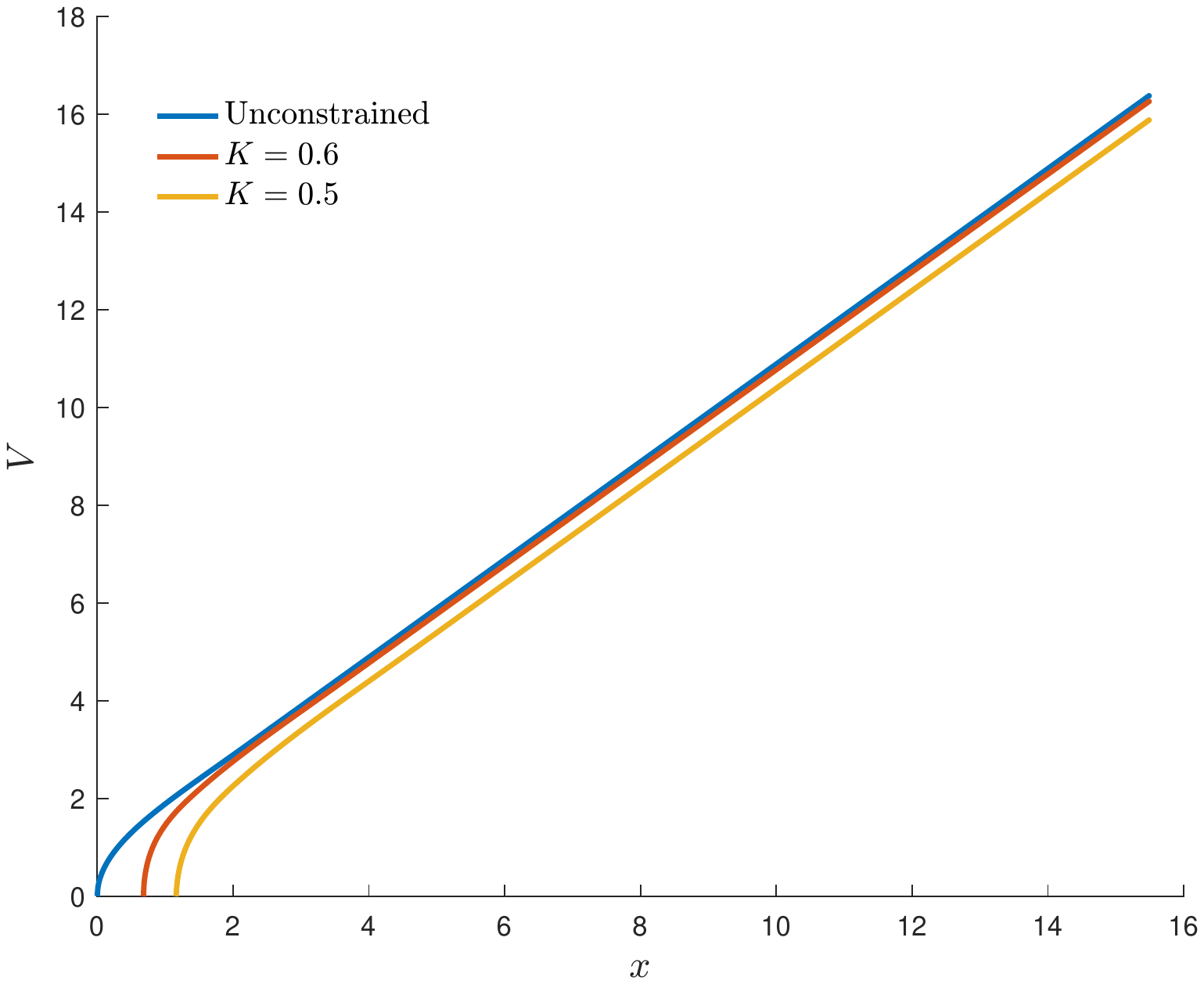}
	\caption{$V$}
   \end{subfigure}
      \caption{$\Lambda^*$ and value function for the constrained problem with different values of $K$.}\label{figoptL}
\end{figure}
\end{ex}

\section{Conclusions and future work}

In the framework of the classical dividend problem there exists a trade-off between stability and profitability. We were able to continue the work started in \cite{HJ15} in order to solve the optimal dividend problem subject to a constraint in the time of ruin. Using the fundamental tool of scale functions and fluctuation theory we improved the previous result for spectrally one-sided L\'evy processes, and included the case of fixed transaction cost.

New questions arise from this work. The first one would be if the same results hold for band strategies instead of barrier strategies. This would probably require to know in advance the number of bands. Now, if we continue working with barrier strategies, to find other constraints that fit with the tools developed in this work is another challenging question. Finally, an interesting question is the existence of a Hamilton-Jacobi-Bellman-like equation that characterizes the value function of the constrained problem, this could open the door for a new theory for constrained stochastic optimal control. 

\section*{Acknowledgments}
Mauricio Junca was supported by Universidad de los Andes under the Grant Fondo de Apoyo a Profesores Asistentes (FAPA). Harold Moreno-Franco acknowledges financial support from HSE, which was given within the framework of a subsidy granted  to the HSE by the government of the Russian Federation for the implementation of the Global Competitiveness Program. 
\bibliographystyle{alpha} 
\bibliography{ref}

\end{document}